\theoremstyle{plain}
\theoremstyle:=definition,remark,plain\do{\expandafter\g@addto@macro\csname th@\theoremstyle\endcsname{\addtolength{\thm@preskip}{\parskip}}}\endgroup
\crefname{equation}{}{}
\numberwithin{equation}{section}
\numberwithin{table}{section}
\newtheorem{Thm}{Theorem}[section]
\newtheorem{Def}[Thm]{Definition}
\newtheorem{Exa}[Thm]{Example}
\newtheorem{Lem}[Thm]{Lemma}
\newtheorem{Prop}[Thm]{Proposition}
\newtheorem{Rem}[Thm]{Remark}
\DeclareMathOperator{\diag}{diag}
\DeclareMathOperator{\Sym}{Sym}
\DeclareMathOperator{\Tr}{Tr}
\DeclareMathOperator{\wt}{wt}
\newcommand{\N}{\mathbb{N}}
\newcommand{\R}{\mathbb{R}}
\newcommand{\C}{\mathbb{C}}
\newcommand{\detqQ}{\det\nolimits_{q,Q}}
\newcommand{\glNC}{\mathfrak{gl}(N,\C)}
\newcommand{\hR}{\hat{R}}
\newcommand{\MNC}{M_{N}(\C)}
\newcommand{\Oq}{\mathcal{O}_{q}}
\newcommand{\OqQ}{\mathcal{O}_{q,Q}}
\newcommand{\OqMNC}{\Oq(\MNC)}
\newcommand{\OqQMNC}{\OqQ(\MNC)}
\newcommand{\OqRMNC}{\Oq^{\R}(\MNC)}
\newcommand{\OqQRMNC}{\OqQ^{\R}(\MNC)}
\newcommand{\qbinom}[2]{\genfrac{[}{]}{0pt}{}{#1}{#2}}
\newcommand{\CB}{Cauchy-Binet}
\newcommand{\CH}{Cayley-Hamilton}
\newcommand{\mpq}{multiparameter quantum}
\newcommand{\rea}{reflection equation algebra}
\newcommand{\YBe}{Yang-Baxter equation}
\renewcommand{\hat}[1]{\widehat{#1}}
\renewcommand{\tilde}[1]{\widetilde{#1}}
\renewcommand{\epsilon}{\varepsilon}
\renewcommand{\phi}{\varphi}
\renewcommand{\Phi}{\varPhi}
\renewcommand{\theta}{\vartheta}
\renewcommand{\Theta}{\varTheta}
\newcommand{\mail}[1]{\href{mailto:#1}{\texttt{#1}}}
\newcommand{\textformula}[1]{\text{\parbox{0.8\linewidth}{#1}}}
\newcommand{\figurerelationsarc}{30}
\begin{document}
\title{Multiparameter quantum \CB{} formulas}
\author{Matthias Flor\'e\footnote{Department of Mathematics, Vrije Universiteit Brussel (VUB), B-1050 Brussels, Pleinlaan 2, Belgium, email: \mail{matthias.flore@vub.ac.be}.}}
\date{}
\maketitle
\begin{abstract}
\setlength{\parskip}{0.5\baselineskip}
\setlength{\parindent}{0pt}
\noindent The quantum \CH{} theorem for the generator of the \rea{} has been proven by Pyatov and Saponov, with explicit formulas for the coefficients in the \CH{} formula. However, these formulas do not give an \emph{easy} way to compute these coefficients. Jordan and White provided an elegant formula for the coefficients given with respect to the generators of the \rea{}. In this paper, we provide \CB{} formulas for these coefficients with respect to generators of $\OqQRMNC$, the multiparameter quantized $^{*}$-algebra of functions on $\MNC$ as a real variety, which contains the \rea{} as a subalgebra. We also prove a \CB{} formula for the inverse of a matrix involving these generators.
\end{abstract}
\paragraph{Acknowledgement.} I would like to thank Kenny De Commer for the excellent support and for notifications about relevant literature.
\tableofcontents
\section{Introduction and motivation}
We start with the \mpq{} deformation of $\MNC$, see \cite{ResmqgatqHa}, \cite{SudcmqoGLn}, \cite{ASTqdoGLn} and \cite{DemmqdotgGLn}.

Let $q,q_{ij}\neq 0$, $i,j\in[N]$ be indeterminates, where we write $[N]=\{1,\dots,N\}$ for $N\in\N_{0}$, which all commute, are invertible and satisfy the following conditions:
\begin{align}
q_{ii} & =1 & & \text{and} & q_{ij}q_{ji} & =q^{2}\text{ for all }i\neq j.\label{multiparam}
\end{align}
Hence there are in fact $1+\binom{N}{2}$ indeterminates. In the following, we will work over the fraction field over these indeterminates. We collect the $q_{ij}$ in the matrix $Q=(q_{ij})_{ij}$. We define the $R$-matrix (see also \cref{remarkconventions})
\begin{equation}
R=\sum_{i,j=1}^{N}{q_{ij}e_{ii}\otimes e_{jj}}+(1-q^{2})\sum_{\mathclap{1\leq i<j\leq N}}{e_{ij}\otimes e_{ji}}\in\MNC\otimes\MNC.\label{Rmatrix}
\end{equation}
The $e_{ij}$ are the matrix units. If $q_{ij}=q$, $i\neq j$, we obtain the classical $R$-matrix for quantum $\glNC$, see \cite{Driqg}, \cite[(1.5)]{FRTqoLgaLa} and \cite[Section 8.4.2]{KSqgatr}. In that case, we drop the index $Q$ in the notations. Note that $R=I_{N}\otimes I_{N}$ if $q_{ij}=q=1$, $i\neq j$. The flip operator $\Sigma=\sum_{i,j=1}^{N}{e_{ij}\otimes e_{ji}}\in\MNC\otimes\MNC$ satisfies $\Sigma(v\otimes w)=w\otimes v$ for all $v,w\in\C^{N}$. We write $\hR=\Sigma\circ R$. Then the \YBe{} and the Hecke condition are satisfied:
\begin{gather}
\begin{aligned}
R_{12}R_{13}R_{23} & =R_{23}R_{13}R_{12}, & & \text{or equivalently} & \hR_{12}\hR_{23}\hR_{12} & =\hR_{23}\hR_{12}\hR_{23},
\end{aligned}\label{YB}\\
\begin{aligned}
\hR^{2} & =q^{2}I_{N}\otimes I_{N}+(1-q^{2})\hR, & & \text{or equivalently} & (\hR-I_{N}^{\otimes 2})(\hR+q^{2}I_{N}^{\otimes 2}) & =0.
\end{aligned}\label{Hecke}
\end{gather}
The $R$-matrix is invertible with
\[
R^{-1}=\sum_{i,j=1}^{N}{q_{ij}^{-1}e_{ii}\otimes e_{jj}}+(1-q^{-2})\sum_{\mathclap{1\leq i<j\leq N}}{e_{ij}\otimes e_{ji}}.
\]
We define $\OqQMNC$ as the algebra generated by the matrix entries of $X=X_{01}=\sum_{i,j=1}^{N}{X_{ij}\otimes e_{ij}}$ satisfying the relation
\begin{align}
R_{12}X_{01}X_{02} & =X_{02}X_{01}R_{12}, & & \text{or equivalently} & \hR_{12}X_{01}X_{02}=X_{01}X_{02}\hR_{12}.\label{FRTOqQMNC}
\end{align}
These relations are equivalent to
\begin{align}
q_{ik}X_{ij}X_{kl} & =q_{jl}X_{kl}X_{ij}+\delta_{j>l}(1-q^{2})X_{kj}X_{il}, & i & \geq k.\label{relationsXX}
\end{align}
Hence in $\OqMNC$ we have the relations\\
\begin{minipage}[c]{0.38\textwidth}
\begin{center}
\begin{tikzpicture}[scale=1]
\draw[-](-1,0)--(1,0)node[right]{$q$-comm};
\draw[-](0,-1)--(0,1)node[above]{$q$-comm};
\draw[-](-1,-1)--(1,1)node[right]{comm};
\draw[-](-1,1)--(1,-1)node[right]{$(q-q^{-1})$};
\draw[<->]([shift=({45-\figurerelationsarc}:1)]0,0) arc ({45-\figurerelationsarc}:{45+\figurerelationsarc}:1);
\draw[<->]([shift=({315-\figurerelationsarc}:1)]0,0) arc ({315-\figurerelationsarc}:{315+\figurerelationsarc}:1);
\end{tikzpicture}
\end{center}
\end{minipage}
\hfill\begin{minipage}[c]{0.61\textwidth}
\begin{align*}
X_{ij}X_{ik} & =\left\{\begin{aligned} & qX_{ik}X_{ij} & & \text{if }j<k\\* & q^{-1}X_{ik}X_{ij} & & \text{if }j>k,\end{aligned}\right.\\
X_{ik}X_{jk} & =\left\{\begin{aligned} & qX_{jk}X_{ik} & & \text{if }i<j\\* & q^{-1}X_{jk}X_{ik} & & \text{if }i>j,\end{aligned}\right.\\
[X_{ij},X_{kl}] & =\left\{\begin{aligned} & (q-q^{-1})X_{kj}X_{il} & & \text{if $i<k$, $j<l$}\\* & 0 & & \text{if $i>k$, $j<l$}.\end{aligned}\right.
\end{align*}
\end{minipage}

We define $\OqQRMNC$ by adding to $\OqQMNC$ the matrix entries of $Y=Y_{01}=\sum_{i,j=1}^{N}{Y_{ij}\otimes e_{ij}}$ satisfying the additional relations
\begin{equation}
\left\{\begin{aligned}R_{12}Y_{02}Y_{01} & =Y_{01}Y_{02}R_{12}, & & \text{or equivalently} & \hR_{12}Y_{02}Y_{01} & =Y_{02}Y_{01}\hR_{12},\\*X_{01}R_{12}Y_{02} & =Y_{02}R_{12}X_{01}, & & \text{or equivalently} & X_{02}\hR_{12}Y_{02} & =Y_{01}\hR_{12}X_{01}\end{aligned}\right.\label{FRTOqQRMNC}
\end{equation}
and equipped with the $^{*}$-structure defined by
\begin{align*}
q^{*} & =q, & Q^{*} & =Q, & X^{*} & =Y, & Y^{*} & =X.
\end{align*}
Thus $q_{ij}^{*}=q_{ji}$ and $X_{ij}^{*}=Y_{ji}$. Note that $R_{12}^{*}=R_{21}$, or equivalently $\hR^{*}=\hR$. The $^{*}$ is well defined since applying $^{*}$ to \cref{multiparam} respectively \cref{FRTOqQMNC} results in $q_{ii}=1$, $q_{ij}q_{ji}=q^{2}$, $i\neq j$ respectively $Y_{02}Y_{01}\hR_{12}=\hR_{12}Y_{02}Y_{01}$, which is precisely \cref{multiparam} respectively the first set of relations in \cref{FRTOqQRMNC} and analogously for the other sets of relations in \cref{FRTOqQRMNC}. The second set of relations in \cref{FRTOqQRMNC} is equivalent to
\begin{equation}
q_{jk}X_{ij}Y_{kl}+\delta_{jk}(1-q^{2})\sum_{m=1}^{j-1}{X_{im}Y_{ml}}=q_{il}Y_{kl}X_{ij}+\delta_{il}(1-q^{2})\sum_{n=i+1}^{N}{Y_{kn}X_{nj}},\label{relationsXY}
\end{equation}
$i,j,k,l\in[N]$. If $q_{ij}=q=1$, $i\neq j$ then all these relations give commutativity and define the coordinate algebra of regular functions on $\MNC$ as a real variety.

Using \cref{FRTOqQMNC} and \cref{FRTOqQRMNC}, we verify that $A=XY$ satisfies the \emph{reflection equation}
\begin{equation}
A_{01}\hR_{12}A_{01}\hR_{12}=\hR_{12}A_{01}\hR_{12}A_{01},\label{refleq}
\end{equation}
or equivalently $R_{12}A_{01}R_{21}A_{02}=A_{02}R_{12}A_{01}R_{21}$. This is the reflection equation used in \cite{JWtcotreavqm}, for $q_{ij}=q$, $i\neq j$ and with $q$ interchanged with $q^{-1}$. The \emph{\rea{}} is defined as the algebra generated by the $N^{2}$ entries of the matrix $A$. One can verify that \cref{refleq} are the universal relations for this subalgebra of $\OqQRMNC$. Similarly, $B=YX$ satisfies
\[
B_{02}\hR_{12}B_{02}\hR_{12}=\hR_{12}B_{02}\hR_{12}B_{02}.
\]
In e.g.~\cite{GPSHsacrorea} the \emph{\CH{} theorem} is proved for the generating matrix $A=XY$ of the \rea{}:
\begin{equation}
\sum_{i=0}^{N}{\sigma_{q,Q}(i)(-XY)^{N-i}}=0.\label{CH}
\end{equation}
The $\sigma_{q,Q}(i)$, $0\leq i\leq N$ are self-adjoint and central in $\OqQRMNC$ (see \cref{selfadjointandcentral}). Hence they are also central in the \rea{} and in fact, they generate the center of the \rea{}, see \cite{FRTqoLgaLa} and \cite{GPSHsacrorea}. We will prove that they also satisfy an invariance property, see \cref{invariance}.

The formulas \cite[(3.2) and (3.3)]{GPSHsacrorea} (\cref{sigmaqQiGPS} and \cref{Newton}) for the coefficients $\sigma_{q,Q}(i)$ in the \CH{} theorem \cref{CH} do not give a \emph{precise} description of them. In \cite[Theorem 1.3]{JWtcotreavqm} (\cref{sigmaqiJW}), an elegant formula for the coefficients in the case that $q_{ij}=q$, $i\neq j$ is given with respect to the generators of the \rea{}. In particular:
\begin{equation}
\textformula{It is a $q$-deformation of the commutative case where all coefficients are integral powers of $q$ (a priori it is possible to have a term with e.g.~the factor $q^{2}+q^{2}(q-q^{-1})$). In particular, there are no terms which vanish if $q\to 1$ (a priori it is possible to have a term with e.g.~the factor $q-q^{-1}$).}\label{observation}
\end{equation}
With respect to the decomposition of $\OqQRMNC$ in its $X$ and $Y$-constituents, it is however not clear from \cref{sigmaqQiGPS} and \cref{Newton} whether the $\sigma_{q,Q}(i)$ satisfy \cref{observation} for a specific ordering. Note that the formula for the coefficients in \cite[Theorem 1.3]{JWtcotreavqm} (\cref{sigmaqiJW}) satisfies \cite[Item 1 and 2 on page 100]{GKLLRTnsf}, which are indicated to hold for an interesting class of noncommutative algebras. Also note that \cite[Theorem 1.3]{JWtcotreavqm} (\cref{sigmaqiJW}) is a quantum analogue of the commutative case, where for all $N\geq 2$ and $0\leq i\leq N$, $\sigma(i)$ is the sum of the $i\times i$-principal minors of  $A$, thus
\begin{equation}
\sigma(i)=\sum_{J\in\binom{[N]}{i}}{[A]_{J,J}}.\label{sigmaicomm}
\end{equation}
Here, we write $\binom{[N]}{i}$ for the set of $i$-element subsets of $[N]$, thus $\binom{[N]}{i}=\{J=(j_{1},\dots,j_{i})\mid 1\leq j_{1}<\dots<j_{i}\leq N\}$ for $i\in[N]$ and $\binom{[N]}{0}=\{\emptyset\}$. For $J,K\in\binom{[N]}{i}$ we denote by $[A]_{J,K}$ the $i\times i$-minor of the $N\times N$-matrix $A$ with rows $j_{1},\dots,j_{i}$ and columns $k_{1},\dots,k_{i}$.

In \cref{theoremtINplusXYinverse} we compute $(tI_{N}+XY)^{-1}$, motivated by the observation that for $N=2$ and $T$ (see \cref{exampleTtwothree}), \cite[(4.14)]{WorCstaragbue} (see \cref{ItwoplusTstarTinverse}) satisfies \cref{observation}. In the commutative case, for all $N\geq 2$, the entries of $(tI_{N}+A)^{-1}$ are given by
\begin{equation}
((tI_{N}+A)^{-1})_{ij}=\frac{1}{\mathcal{C}_{N}}\sum_{\substack{K\subset[N]\\i,j\in K}}{t^{N-|K|}(-1)^{|K_{>i}|+|K_{>j}|}[A]_{K\setminus\{j\},K\setminus\{i\}}}\label{tINplusXYinversecomm}
\end{equation}
where $\mathcal{C}_{N}=\sum_{k=0}^{N}{\sigma(k)t^{N-k}}$ and $K_{>i}=\{k\in K\mid k>i\}$. If $A=XY$ then we can use the classical \CB{} formula
\[
[A]_{I,J}=\sum_{K\in\binom{[N]}{i}}{[X]_{I,K}[Y]_{K,J}}
\]
to rewrite \cref{sigmaicomm} and \cref{tINplusXYinversecomm} in terms of minors of $X$ and $Y$. In \cref{theoremsigmaqQiCB} and \cref{theoremtINplusXYinverse}, we give \mpq{} analogues of these combined formulas, which can thus be seen as \emph{\mpq{} \CB{} formulas}. These will be proved later in this paper. We will use the notation
\begin{align}
D & =\diag(1,q,\dots,q^{N-1}), & D' & =\diag(q^{N-1},\dots,q,1).\label{DandDprime}
\end{align}
The \mpq{} minors $[\cdot]_{q,Q,J,K}$ are given in \cref{mpqminor} and \cref{mpqminorofXandYwithDandDprime}. For $J\subset[N]$, $k\in[N]$ and $\diamond$ being $<$ or $>$, we will write
\begin{align*}
(-q)_{J\diamond k} & =\prod_{\substack{j\in J\\j\diamond k}}{(-q_{jk})}, & (-q)_{k\diamond J} & =\prod_{\substack{j\in J\\k\diamond j}}{(-q_{kj})}, & q_{Jk} & =\prod_{j\in J}{q_{jk}}.
\end{align*}
\begin{Thm}\label{theoremsigmaqQiCB} For all $N\geq 2$ and $0\leq i\leq N$ we have
\[
\sigma_{q,Q}(i)=\sum_{\mathclap{J,K\in\binom{[N]}{i}}}{[DX]_{q,Q,J,K}[YD]_{q,Q,K,J}}=\sum_{\mathclap{J,K\in\binom{[N]}{i}}}{[D'Y]_{q,Q,J,K}[XD']_{q,Q,K,J}}.
\]
In particular, $\sigma_{q,Q}(i)$ satisfies \cref{observation} if the factors are arranged as above.
\end{Thm}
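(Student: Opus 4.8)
The plan is to factor the statement through two facts, both belonging to the machinery set up earlier in the paper: a \mpq{} analogue of the Jordan--White formula \cref{sigmaqiJW} expressing $\sigma_{q,Q}(i)$ directly through the generating matrix $A=XY$ of the \rea{}, and the \mpq{} \CB{} identity for the minors of $XY$ and $YX$ recorded in \cref{mpqminorofXandYwithDandDprime}. The first fact, which I would establish as a lemma (or extract from the proof of the \mpq{} analogue of \cref{sigmaqiJW}), reads
\[
\sigma_{q,Q}(i)=\sum_{J\in\binom{[N]}{i}}{[DAD]_{q,Q,J,J}}=\sum_{J\in\binom{[N]}{i}}{[D'BD']_{q,Q,J,J}}
\]
with $A=XY$, $B=YX$ and $D,D'$ as in \cref{DandDprime}; this is the $q$-analogue of \cref{sigmaicomm}, the point of the two-sided twist by $D$ (resp.\ $D'$) being to turn the ``reflection-equation-flavoured'' principal minors of $A$ (resp.\ $B$), in which the $q$-powers attached to $A_{ij}$ depend on the nesting of the indices, into honest \mpq{} minors in the sense of \cref{mpqminor}. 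I would obtain it either from the Newton-type recursion \cref{Newton} together with \cref{sigmaqQiGPS}, or by transporting the single-parameter statement along the cocycle twist relating the single- and \mpq{} settings, under which $\sigma_{q}(i)\mapsto\sigma_{q,Q}(i)$ and the generators are rescaled by the appropriate ratios of the $q_{ij}$.

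Given this, the first displayed equality of \cref{theoremsigmaqQiCB} is immediate: since $DAD=(DX)(YD)$ and the entries of $DX$ and $YD$ satisfy the instances of \cref{FRTOqQMNC} and \cref{FRTOqQRMNC} needed for quantum minors of a product to factorise, \cref{mpqminorofXandYwithDandDprime} gives
\[
[DAD]_{q,Q,J,J}=\sum_{K\in\binom{[N]}{i}}{[DX]_{q,Q,J,K}[YD]_{q,Q,K,J}}
\]
for each $J\in\binom{[N]}{i}$, and summing over $J$ finishes it. The second displayed equality follows the same pattern with $(A,D)$ replaced by $(B,D')$, using $D'BD'=(D'Y)(XD')$ and the companion relations; the $\sigma$-formula for $B$ that this requires is the second equality in the display above, which is itself a consequence of the $^{*}$-structure $X^{*}=Y$, $Y^{*}=X$, $Q^{*}=Q$, the self-adjointness of $\sigma_{q,Q}(i)$ (\cref{selfadjointandcentral}), and the behaviour of the \mpq{} minor under $^{*}$, which exchanges the roles of $D$ and $D'$.

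The ``in particular'' clause is then a one-line observation: by its definition in \cref{mpqminor} each \mpq{} minor $[DX]_{q,Q,J,K}$, $[YD]_{q,Q,K,J}$ (and likewise $[D'Y]_{q,Q,J,K}$, $[XD']_{q,Q,K,J}$) is a signed sum of monomials in the generators whose coefficients are, up to sign, monomials in $q$ and the $q_{ij}$; none is of the form $1-q^{2}$ or $q-q^{-1}$, and none vanishes as $q\to1$, and products of such expressions retain this shape. Hence each of the two sums in \cref{theoremsigmaqQiCB} displays $\sigma_{q,Q}(i)$ in the form demanded by \cref{observation}; this is exactly what is lost if one expands instead via \cref{sigmaqQiGPS}, whose $(1-q^{2})$-contributions obscure it.

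I expect the only real difficulty to be the first fact: getting the $q$-power and sign bookkeeping to line up so that the \rea{}-side formula for $\sigma_{q,Q}(i)$ is \emph{literally} $\sum_{J}[DAD]_{q,Q,J,J}$ --- the correct twist being two-sided by $D$ (rather than one-sided, or by $D^{2}$, or a conjugation $DAD^{-1}$), and the correct quantum determinant being the one whose row/column convention makes it the exact image under \cref{mpqminorofXandYwithDandDprime} of $(DX)(YD)$ --- and verifying that the same bookkeeping works verbatim with $D\to D'$ and $A\to B$. Once the conventions are aligned, everything downstream is purely formal.
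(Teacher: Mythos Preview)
Your plan has a genuine gap at both of its two steps, and in fact inverts the logical order of the paper.

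First, the reference you invoke for ``the \mpq{} \CB{} identity for the minors of $XY$ and $YX$ recorded in \cref{mpqminorofXandYwithDandDprime}'' is not a \CB{} identity at all: \cref{mpqminorofXandYwithDandDprime} merely records that left- or right-multiplying by the diagonal matrices $D,D'$ rescales a \mpq{} minor by a power of $q$. There is no \CB{} factorisation of a minor of a product $XY$ available in this setting; indeed the Remark immediately following the proof of \cref{theoremsigmaqQiCB} points out precisely that the known quantum \CB{} formula requires $[X_{ij},Y_{kl}]=0$, which fails here. The statement you are trying to prove \emph{is} the \mpq{} \CB{} formula in this context, so you cannot appeal to one as input.

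Second, your ``first fact'' $\sigma_{q,Q}(i)=\sum_{J}[DAD]_{q,Q,J,J}$ is not established in the paper and is problematic as stated: the \mpq{} minor $[\,\cdot\,]_{q,Q,J,K}$ of \cref{mpqminor} is well-defined (i.e.\ the various expansions agree) only for matrices satisfying the FRT relation \cref{FRTOqQMNC}, whereas $A=XY$ satisfies the reflection equation \cref{refleq}. The paper notes that $DX$, $XD'$, $YD$, $D'Y$ obey the FRT-type relations, but makes no such claim for $DAD$, and there is no reason to expect the equivalent forms of \cref{mpqminor} to agree for it. So before your Step~2 can even be posed, you would have to prove Step~1 in some ad hoc sense of ``minor of $A$'' --- which is essentially the content of \cref{sigmaqiJW}, whose \mpq{} extension is not given in the paper and would be the entire theorem.

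The paper's actual route avoids both issues: it first rewrites \cref{sigmaqQiGPS} into the separated form \cref{sigmaqQiseparated}, $\sigma_{q,Q}(i)=q^{(i-1)i}\alpha_{i}\,v_{1\dots N}X_{01}\cdots X_{0i}Y_{0i}\cdots Y_{01}u_{1\dots N}$, via a string of $\hR$-manipulations, and then expands the Levi-Civita tensors directly. The constraint $\sigma|_{[N]\setminus[i]}=\tau|_{[N]\setminus[i]}$ isolates a subset $J$, the factorisation of $(-q)_{\sigma,r}(-q)_{\tau,c}$ produces the cross-term $q^{2(\wt(J)-i(i+1)/2)}$, and the remaining sums over $\tilde\sigma,\tilde\tau,\nu,\xi$ assemble into $[X]_{q,Q,J,K}[Y]_{q,Q,K,J}$ times $[i]![N-i]!$, which cancels $\alpha_i$. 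The second equality then comes from the invariance under~$'$ (\cref{invariance}) together with \cref{primeappliedtompqminor}, not from the $^{*}$-structure as you suggest.
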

\begin{Thm}\label{theoremtINplusXYinverse} Let $t$ be a central indeterminate element in $\OqQRMNC$, e.g.~a parameter. For all $N\geq 2$, the entries of $(tI_{N}+XY)^{-1}$ are given by
\begin{align*}
 & ((tI_{N}+XY)^{-1})_{ij}\\*
={} & \frac{1}{\mathcal{C}_{N}}\sum_{\substack{K\subset[N]\\i,j\in K}}{t^{N-|K|}(-q)_{i<K}(-q)_{K>j}\sum_{L\in\binom{[N]}{|K|-1}}{[D'Y]_{q,Q,L,K\setminus\{i\}}[XD']_{q,Q,K\setminus\{j\},L}}}
\end{align*}
where we formally invert $\mathcal{C}_{N}=\sum_{k=0}^{N}{\sigma_{q,Q}(k)t^{N-k}}$, which is a central element. In particular, $(tI_{N}+XY)^{-1}$ satisfies \cref{observation} if the factors are arranged as above.
\end{Thm}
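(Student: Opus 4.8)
The plan is to check directly that $\mathcal{C}_N^{-1}$ times the matrix $M=(M_{ij})$ displayed in the statement is the inverse of $tI_N+XY$; the quantum \CH{} theorem reduces this to a one-sided verification. First I would note that, by \cref{CH} and the centrality of the coefficients $\sigma_{q,Q}(k)$ (\cref{selfadjointandcentral}), a telescoping of the characteristic relation $\sum_{k=0}^{N}\sigma_{q,Q}(k)(-XY)^{N-k}=0$ yields an explicit polynomial $P(XY)$ with $(tI_N+XY)\,P(XY)=\mathcal{C}_N I_N=P(XY)\,(tI_N+XY)$, where $\mathcal{C}_N=\sum_{k=0}^{N}\sigma_{q,Q}(k)t^{N-k}$. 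Thus, after formally inverting the central element $\mathcal{C}_N$, the matrix $tI_N+XY$ has a (necessarily unique) two-sided inverse, and it therefore suffices to prove the single identity $(tI_N+XY)\,M=\mathcal{C}_N I_N$: this forces $M=\mathcal{C}_N(tI_N+XY)^{-1}$, which is the assertion.

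To prove $(tI_N+XY)\,M=\mathcal{C}_N I_N$ I would first collapse the inner sum over $L$ in $M_{ij}$. The very \mpq{} \CB{} identity on which \cref{theoremsigmaqQiCB} rests --- the merging of a product of a quantum minor of $D'Y$ with one of $XD'$ into a single \mpq{} minor of $XY$ --- rewrites $\sum_{L}[D'Y]_{q,Q,L,K\setminus\{i\}}[XD']_{q,Q,K\setminus\{j\},L}$ as one \mpq{} cofactor of $XY$ with row set $K\setminus\{j\}$ and column set $K\setminus\{i\}$. After this, $M_{ij}$ becomes the \mpq{} analogue of $\mathrm{adj}(tI_N+A)_{ij}$ from the commutative formula \cref{tINplusXYinversecomm} with $A=XY$: a sum over $K\ni i,j$ of $t^{N-|K|}$ times a $(|K|-1)\times(|K|-1)$ quantum minor of $XY$, where summing over $K$ is nothing but the expansion of a single $(N-1)\times(N-1)$ \mpq{} minor of $tI_N+XY$ according to which diagonal entries contribute their $t$ and which their $XY$-part.

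The core of the argument is then a \mpq{} Laplace expansion together with the vanishing of a ``repeated-row'' expansion. Writing $\bigl((tI_N+XY)M\bigr)_{ij}=\sum_{l}(tI_N+XY)_{il}M_{lj}$, inserting $(XY)_{il}=\sum_{m}X_{im}Y_{ml}$ and moving the entries of $X$ and $Y$ through the quantum minors in $M_{lj}$ by means of \cref{relationsXX} and \cref{relationsXY}, the sum over $l$ should reorganize into the Laplace expansion, along the $i$-th row, of the full $N\times N$ \mpq{} determinant of the matrix obtained from $tI_N+XY$ by overwriting its $j$-th row with its $i$-th row. For $i=j$ that determinant is $\detqQ(tI_N+XY)$, which by \cref{theoremsigmaqQiCB} equals $\mathcal{C}_N=\sum_{m=0}^{N}t^{N-m}\sigma_{q,Q}(m)$; for $i\neq j$ it vanishes by the quantum repeated-row relation. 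This is precisely $(tI_N+XY)\,M=\mathcal{C}_N I_N$. Finally, the statement that \cref{observation} holds is immediate from the form of $M$: each term is a product of a monomial in $t$ and the $q_{ij}$ with the minors $[D'Y]_{q,Q,\cdot,\cdot}$ and $[XD']_{q,Q,\cdot,\cdot}$, which by their definitions (\cref{mpqminor}, \cref{mpqminorofXandYwithDandDprime}) are $q$-deformations of ordinary minors carrying only Laurent-monomial coefficients in $q$ and the $q_{ij}$, with no $(q-q^{-1})$-type summands in this arrangement.

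I expect the main obstacle to be precisely this last reorganization. The $(1-q^{2})$-corrections arising from \cref{relationsXX}, and above all from the cross-relations \cref{relationsXY}, when one drags $X_{im}$ and $Y_{ml}$ past the $D'$-twisted minors, must recombine exactly with the weights $(-q)_{i<K}$, $(-q)_{K>j}$ and with the internal weights of the \mpq{} minors to reconstitute a genuine quantum-determinant expansion. I anticipate that the cleanest way to control these cancellations is to phrase the argument in the FRT/$\hR$-formalism --- with the $q$-antisymmetrizing idempotents acting on $(\C^{N})^{\otimes|K|}$ and the covariance relations \cref{FRTOqQMNC}, \cref{FRTOqQRMNC}, rather than with individual matrix entries --- so that both the Laplace expansion and the repeated-row vanishing become identities among those idempotents; this is also the natural home of the diagonal twists $D$ and $D'$.
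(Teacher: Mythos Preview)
Your overall architecture --- use \cref{CH} to guarantee a two-sided inverse exists, then verify one of $M(tI_N+XY)=\mathcal{C}_N I_N$ or $(tI_N+XY)M=\mathcal{C}_N I_N$ --- is exactly the framework the paper uses. The substantive gap is the first move inside that verification: you propose to ``collapse the inner sum over $L$'' by a \mpq{} \CB{} identity that would turn $\sum_{L}[D'Y]_{q,Q,L,K\setminus\{i\}}[XD']_{q,Q,K\setminus\{j\},L}$ into a single \mpq{} minor of $XY$. No such identity is available here. \Cref{theoremsigmaqQiCB} only asserts that the \emph{double} sum over both index sets $J,K$ reproduces $\sigma_{q,Q}(i)$; it does not say that for a fixed $K$ the sum over $L$ is a minor of $XY$, and the paper explicitly warns (in the Remark following the proof of \cref{theoremsigmaqQiCB}) that the known quantum \CB{} formula requires the entries of the two factors to commute, which fails for $X$ and $Y$ by \cref{relationsXY}. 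Consequently the object you would need next --- a \mpq{} minor of $tI_N+XY$ together with its Laplace expansion and a repeated-row vanishing law --- is not defined in this paper and is not a consequence of \cref{mpqminor}, which is set up only for matrices whose entries satisfy \cref{relationsXX}, not for $A=XY$.

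The paper's proof never merges into $XY$-minors. It multiplies the candidate matrix on the right by $tI_N+XY$, keeps the $X$- and $Y$-minors separate throughout, and the technical engine is the commutation relation between an $X$-minor and a single entry $Y_{\beta\alpha}$ (\cref{relationsminorXtimesY}, refined by \cref{relationsumofminorXtimesY}) together with the Laplace expansions of \cref{Laplexp}. Concretely: the factor $X_{jm}$ from $(XY)_{jk}$ is absorbed into $[X]_{q,Q,K\setminus\{j\},L}$ via \cref{LaplexpminorXcolumnX}, producing $[X]_{q,Q,K,L\cup\{m\}}$; then $Y_{mk}$ is moved to the left of this $X$-minor via \cref{relationsminorXtimesY}, generating $(1-q^{2})$-correction sums that are shown to telescope after reindexing; finally a second application of \cref{Laplexp}, now on the $Y$-side, closes up the expression. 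Your intuition that the $(1-q^{2})$-terms are the crux is correct, but the mechanism that tames them is \cref{relationsminorXtimesY}, not an antisymmetrizer identity or a reduction to minors of $XY$.
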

\begin{Rem}\label{remarkconventions} In comparison to \cite{FRTqoLgaLa}, \cite{PWqlg}, \cite{PScrfqm}, \cite{GPSHsacrorea}, \cite{KSqgatr}, \cite{ZhatqCHt} and \cite{JWtcotreavqm}, we interchange $q$ and $q^{-1}$ because then we have positive powers of $q$ in \cref{Rmatrix}, \cref{Hecke}, \cref{relationsXX}, \cref{relationsXY}, \cref{DandDprime}, \cref{theoremsigmaqQiCB}, \cref{theoremtINplusXYinverse}, \cref{qnumber}, \cref{RLC}, \cref{normLC}, \cref{sigmaqTN}, \cref{tableTtwothree}, \cref{sigmaqQiseparated}, \cref{mpqminorofXandYwithDandDprime}, \cref{sigmaqQN} and \cref{ItwoplusTstarTinverse}. We also have an extra factor $q$ by $R$ in \cref{Rmatrix} because then the factor $q$ in \cite[(3.1) and also (2.24) after our next remark on $q$-numbers]{GPSHsacrorea} disappears. This also causes an extra power of $q$ in \cref{Hecke}, \cref{RLC} and \cref{sigmaqQiGPS}. Lastly, we also use another convention of $q$-numbers in \cref{qnumber} because then abundant powers of $q$ in \cref{sqQzero}, \cref{normLC}, \cref{alphai} and \cref{Newton} disappear.
\end{Rem}
\section{The \CH{} theorem in the \rea{}}
To give the coefficients $\sigma_{q,Q}(i)$ in \cref{CH}, we recall some properties and notations from \cite{GPSHsacrorea}. First we recall (\cite[Chapter 2]{KSqgatr}) the $q$-numbers. We write $n_{q}=\frac{1-q^{n}}{1-q}=1+q+q^{2}+\dots+q^{n-1}$ (allowing $q=1$). We will use the \emph{$q$-number}
\begin{equation}
[n]=n_{q^{2}}=\frac{1-q^{2n}}{1-q^{2}}.\label{qnumber}
\end{equation}
The \emph{$q$-factorial} is $[n]!=[1][2]\dots[n]$ if $n\in\N_{0}$, $[0]!=1$ and the \emph{$q$-binomial coefficients} are $\qbinom{N}{i}=\frac{[N]!}{[i]![N-i]!}$. Note that all these expressions converge to their classical counterpart if $q\to 1$ since $[n]\to n$ if $q\to 1$.

In order to apply \cite{GPSHsacrorea}, we need that $R_{12}^{t_{1}}$ is invertible, where $t_{1}$ is the transpose in the first leg, thus
\[
R_{12}^{t_{1}}=\sum_{i,j=1}^{N}{q_{ij}e_{ii}\otimes e_{jj}}+(1-q^{2})\sum_{\mathclap{1\leq i<j\leq N}}{e_{ji}\otimes e_{ji}}.
\]
This condition is satisfied since
\[
(R_{12}^{t_{1}})^{-1}=\sum_{i,j=1}^{N}{q_{ij}^{-1}e_{ii}\otimes e_{jj}}+(1-q^{-2})\sum_{\mathclap{1\leq i<j\leq N}}{q^{2(j-i)}e_{ji}\otimes e_{ji}},
\]
which is easily verified.
In \cite{GPSHsacrorea}, the matrices $\mathcal{C}$ and $\mathcal{B}$ are defined as
\begin{align*}
\mathcal{C} & =\Tr_{(1)}(((R_{12}^{t_{1}})^{-1})_{12}^{t_{1}}\Sigma_{12}), & \mathcal{B} & =\Tr_{(2)}(((R_{12}^{t_{1}})^{-1})_{12}^{t_{1}}\Sigma_{12}),
\end{align*}
where $\Tr_{(i)}$ is the trace in the $i$-th leg. We compute that $\mathcal{C}=D^{2}$ and $\mathcal{B}=(D')^{2}$. In \cite{GPSHsacrorea}, the \emph{quantum power sum symmetric polynomials} are defined as
\begin{align}
s_{q,Q}(k) & =\Tr(\mathcal{C}A^{k})=\Tr(D^{2}(XY)^{k}), & k & \geq 0.\label{sqQk}
\end{align}
It is clear that the $s_{q,Q}(k)$, $k\geq 0$ are self-adjoint. Note that
\begin{equation}
s_{q,Q}(0)=\Tr(D^{2})=\Tr((D')^{2})=[N].\label{sqQzero}
\end{equation}
For a function $\sigma:I\to\N$ with $I$ a finite subset of $\N$, we define the \emph{length} by
\[
l(\sigma)=|\{(i,j)\in I^{2}\mid i<j\text{ and }\sigma(i)>\sigma(j)\}|.
\]
We also define
\begin{align}
(-q)_{\sigma,r} & =\prod_{\substack{i,j\in I,i<j\\\sigma(i)>\sigma(j)}}{(-q_{\sigma(i)\sigma(j)})}, & (-q)_{\sigma,c} & =\prod_{\substack{i,j\in I,i<j\\\sigma(i)>\sigma(j)}}{(-q_{\sigma(j)\sigma(i)})}.\label{minusqsigmarandminusqsigmac}
\end{align}
The notation $r$ (row) and $c$ (column) is motivated by the formula for the \mpq{} minors in \cref{mpqminor}, where $(-q)_{\sigma,r}$ respectively $(-q)_{\sigma,c}$ belongs to the permutation which occurs in the row respectively column index. Note that $(-q)_{\sigma,r}^{*}=(-q)_{\sigma,c}$. The \emph{\mpq{} Levi-Civita tensor} is given by
\begin{equation}
u_{1\dots N}=\sum_{\sigma\in S_{N}}{(-q)_{\sigma,c}e_{\sigma(1)}\otimes\dots\otimes e_{\sigma(N)}}\in\bigotimes_{k=1}^{N}{\C^{N}},\label{LC}
\end{equation}
where $e_{i}$ is the $N\times 1$-column vector with $1$ on position $(i,1)$ and $0$ elsewhere and $S_{N}$ is the symmetric group of permutations of $[N]$. We verify that
\begin{align}
\hR_{i,i+1}u_{1\dots N} & =-q^{2}u_{1\dots N}, & 1 & \leq i\leq N-1.\label{RLC}
\end{align}
Its $2$-norm squared satisfies
\begin{equation}
\|u_{1\dots N}\|^{2}=u_{1\dots N}^{*}u_{1\dots N}=\sum_{\sigma\in S_{N}}{q^{2l(\sigma)}}=[N]!\label{normLC}
\end{equation}
and we define $v_{1\dots N}=u_{1\dots N}^{*}$. By applying $^{*}$ to \cref{RLC}, we obtain $v_{1\dots N}\hR_{i,i+1}=-q^{2}v_{1\dots N}$, $1\leq i\leq N-1$. Using \cite[(3.2)]{GPSHsacrorea}, the coefficients in \cref{CH}, which are the \emph{quantum elementary symmetric polynomials}, are given by $\sigma_{q,Q}(0)=1$ and
\begin{equation}
\sigma_{q,Q}(i)=q^{-(i-1)i}\alpha_{i}v_{1\dots N}((XY)_{01}\hR_{12}\dots\hR_{i-1,i})^{i}u_{1\dots N}\label{sigmaqQiGPS}
\end{equation}
for $i\in[N]$, where
\begin{equation}
\alpha_{i}=\frac{1}{\|u_{1\dots N}\|^{2}}\qbinom{N}{i}.\label{alphai}
\end{equation}
By \cite[Proposition 1]{GPSHsacrorea}, the $\sigma_{q,Q}(i)$ and $s_{q,Q}(i)$ are related by the \emph{Newton relations}
\begin{align}
[k]\sigma_{q,Q}(k)+\sum_{i=1}^{k}{(-1)^{i}s_{q,Q}(i)\sigma_{q,Q}(k-i)} & =0, & 1 & \leq k\leq N.\label{Newton}
\end{align}
\begin{Prop}\label{tracecommutation} If the $N\times N$-matrices $M$, $P$ in noncommuting variables satisfy the relation $R_{12}P_{01}M_{02}=M_{02}R_{12}P_{01}$ then $[\Tr(D^{2}M),P_{ij}]=0$ for all $i,j\in[N]$.
\end{Prop}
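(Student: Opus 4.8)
The plan is to reduce the statement to one compatibility identity for the quantum trace $\Tr(D^{2}\,\cdot\,)$ and then feed in the hypothesis. I view $M$ as occupying legs $(0,2)$ and $P$ as occupying legs $(0,1)$, so that $\Tr(D^{2}M)=\Tr_{(2)}(D^{2}_{(2)}M_{02})$, and the claim $[\Tr(D^{2}M),P_{ij}]=0$ for all $i,j$ is equivalent to the single equation $P_{01}\Tr(D^{2}M)=\Tr(D^{2}M)P_{01}$ in the algebra tensored with $\MNC$ (acting in leg $1$).

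First I would establish the numerical identity
\[
\Tr_{(2)}\bigl(D^{2}_{(2)}\,R_{12}^{-1}\,z_{(2)}\,R_{12}\bigr)=\Tr(D^{2}z)\,I_{(1)}
\]
for every $N\times N$ matrix $z$ with scalar entries. By multilinearity it suffices to take $z=e_{ab}$; then, using the explicit $R$ from \cref{Rmatrix}, the inverse $R^{-1}$ displayed just after it, and $D^{2}=\diag(1,q^{2},\dots,q^{2(N-1)})$ coming from \cref{DandDprime}, one expands $R_{12}^{-1}e^{(2)}_{ab}R_{12}$ as a sum of terms $e^{(1)}_{pq}\otimes e^{(2)}_{rs}$, discards those with $r\neq s$ (they vanish under $\Tr_{(2)}(D^{2}_{(2)}\,\cdot\,)$), and checks that the surviving coefficients collapse to $\delta_{ab}q^{2(a-1)}I_{(1)}=\Tr(D^{2}e_{ab})I_{(1)}$, the off-diagonal corrections cancelling via a short telescoping sum. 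This is exactly the property that singles out $\mathcal{C}=D^{2}$ as the canonical trace twist for this $R$-matrix; alternatively it can be extracted from the $R$-matrix formalism of \cite{GPSHsacrorea}. Extending by linearity, treating algebra elements as scalars relative to the numerical matrices, the same identity holds with $z$ replaced by $M_{02}$, the factor $I_{(1)}$ then being absorbed.

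Finally I would combine the two ingredients. The hypothesis $R_{12}P_{01}M_{02}=M_{02}R_{12}P_{01}$ rearranges to $P_{01}M_{02}=R_{12}^{-1}M_{02}R_{12}P_{01}$. Since $P_{01}$ is the identity on leg $2$, it passes in and out of $\Tr_{(2)}$, whence
\begin{align*}
P_{01}\Tr_{(2)}\bigl(D^{2}_{(2)}M_{02}\bigr)
&=\Tr_{(2)}\bigl(D^{2}_{(2)}P_{01}M_{02}\bigr)
=\Tr_{(2)}\bigl(D^{2}_{(2)}R_{12}^{-1}M_{02}R_{12}P_{01}\bigr)\\
&=\Tr_{(2)}\bigl(D^{2}_{(2)}R_{12}^{-1}M_{02}R_{12}\bigr)\,P_{01}.
\end{align*}
By the identity above the last trace is $\Tr(D^{2}M)\,I_{(1)}$, so $P_{01}\Tr(D^{2}M)=\Tr(D^{2}M)P_{01}$; reading off the coefficient of $e^{(1)}_{ij}$ gives $[\Tr(D^{2}M),P_{ij}]=0$.

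The only genuinely non-formal step is the numerical identity of the second paragraph, which I expect to be the main obstacle: the telescoping of the off-diagonal corrections has to be tracked carefully, since this is precisely where the exact shape of $D^{2}$ (rather than some other diagonal twist) is forced, and one must respect the asymmetry between $R$ and $R^{-1}$ --- the variant with $R_{12}\,z_{(2)}\,R_{12}^{-1}$ in place of $R_{12}^{-1}\,z_{(2)}\,R_{12}$ fails, so only the ordering produced by the rearranged hypothesis does the job.
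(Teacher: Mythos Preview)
Your argument is correct. The key numerical identity
\[
\Tr_{(2)}\bigl(D^{2}_{(2)}\,R_{12}^{-1}\,z_{(2)}\,R_{12}\bigr)=\Tr(D^{2}z)\,I_{(1)}
\]
does hold for this $R$-matrix (one checks it on $z=e_{ab}$: for $a=b$ the diagonal part gives $q^{2(a-1)}I$; for $a>b$ everything vanishes; for $a<b$ the three off-diagonal contributions telescope to zero exactly because the diagonal weights are $q^{2(k-1)}$). Once this is in hand, your rearrangement of the hypothesis to $P_{01}M_{02}=R_{12}^{-1}M_{02}R_{12}P_{01}$ and the passage of $P_{01}$ in and out of $\Tr_{(2)}$ are formal and valid, and the conclusion follows.

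This is a genuinely different route from the paper. The paper simply expands $R_{12}P_{01}M_{02}=M_{02}R_{12}P_{01}$ in matrix units, obtaining for each quadruple $(\alpha,\beta,\gamma,\delta)$ a relation between $P_{\alpha\beta}M_{\gamma\delta}$ and $M_{\gamma\delta}P_{\alpha\beta}$ plus correction terms, and then combines these relations with weights $q^{2(\gamma-1)}$ so that the corrections telescope away. Your approach packages exactly the same telescoping into the single $R$-matrix identity above --- this is precisely the statement that $\Tr(D^{2}\,\cdot\,)$ is the $R$-invariant (``quantum'') trace, which is the characterising property of $\mathcal{C}=D^{2}$ in the formalism of \cite{GPSHsacrorea}. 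What you gain is conceptual clarity and a proof that transports to any $R$-matrix with the right invertibility properties; what the paper's bare-hands expansion gains is that it needs no auxiliary identity and makes the role of the explicit entries of $R$ visible directly.
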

\begin{proof} We write $R_{12}P_{01}M_{02}$ and $M_{02}R_{12}P_{01}$ out. Hence we obtain sets of relations, from which the result follows.
\end{proof}
\begin{Lem} For all $k\geq 0$ we have
\begin{equation}
R_{12}X_{01}((XY)^{k})_{02}=((XY)^{k})_{02}R_{12}X_{01}.\label{FRTforhigherordercommutation}
\end{equation}
\end{Lem}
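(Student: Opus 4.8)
The plan is a straightforward induction on $k$, peeling off one factor of $XY$ at a time and pushing the single-$X$ commutation relations through it. For the base case $k=0$ the matrix $(XY)^{0}$ is the identity, so $((XY)^{0})_{02}$ is the identity operator and \cref{FRTforhigherordercommutation} reduces to the trivial identity $R_{12}X_{01}=R_{12}X_{01}$.

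For the inductive step, assume \cref{FRTforhigherordercommutation} holds for some $k\geq 0$. I would write $((XY)^{k+1})_{02}=((XY)^{k})_{02}(XY)_{02}$ as a product of matrices over the algebra acting in legs $0$ and $2$, and decompose $(XY)_{02}=X_{02}Y_{02}$, so that
\[
R_{12}X_{01}((XY)^{k+1})_{02}=R_{12}X_{01}((XY)^{k})_{02}X_{02}Y_{02}.
\]
Then apply, in this order: the induction hypothesis to move $R_{12}X_{01}$ to the right of $((XY)^{k})_{02}$; the relation $R_{12}X_{01}X_{02}=X_{02}X_{01}R_{12}$ from \cref{FRTOqQMNC}; and the relation $X_{01}R_{12}Y_{02}=Y_{02}R_{12}X_{01}$, which is the second set of relations in \cref{FRTOqQRMNC}. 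This produces successively
\[
((XY)^{k})_{02}R_{12}X_{01}X_{02}Y_{02}=((XY)^{k})_{02}X_{02}X_{01}R_{12}Y_{02}=((XY)^{k})_{02}X_{02}Y_{02}R_{12}X_{01},
\]
and since $((XY)^{k})_{02}X_{02}Y_{02}=((XY)^{k})_{02}(XY)_{02}=((XY)^{k+1})_{02}$ the right-hand side is exactly $((XY)^{k+1})_{02}R_{12}X_{01}$, closing the induction.

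There is no genuine obstacle here; the only points requiring care are bookkeeping ones. First, each of the three substitutions replaces a sub-word ($R_{12}X_{01}((XY)^{k})_{02}$, then $R_{12}X_{01}X_{02}$, then $X_{01}R_{12}Y_{02}$) sitting between fixed matrix factors, which is legitimate because the relevant relations are identities in $\OqQRMNC$ tensored with matrices, so left- and right-multiplication by arbitrary matrices preserves them. Second, the step invoking the induction hypothesis is not a trivial commutation, since $((XY)^{k})_{02}$ and $X_{01}$ share leg $0$; it is exactly the statement being assumed inductively, which is the whole reason the lemma is phrased for all $k$ at once. Finally, I would note the intended use: combining \cref{FRTforhigherordercommutation} with \cref{tracecommutation} for $M=(XY)^{k}$ and $P=X$ gives $[\Tr(D^{2}(XY)^{k}),X_{ij}]=0$, i.e.\ $[s_{q,Q}(k),X_{ij}]=0$ by \cref{sqQk}.
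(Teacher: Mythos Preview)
Your proof is correct and is exactly the argument the paper has in mind: the paper's proof reads in full ``This follows from \cref{FRTOqQMNC}, \cref{FRTOqQRMNC} and induction,'' and your inductive step spells out precisely those two substitutions in the right order.
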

\begin{proof} This follows from \cref{FRTOqQMNC}, \cref{FRTOqQRMNC} and induction.
\end{proof}
\begin{Thm}\label{selfadjointandcentral} The $\sigma_{q,Q}(i)$, $0\leq i\leq N$ and the $s_{q,Q}(k)$, $k\geq 0$ are self-adjoint and central in $\OqQRMNC$ (see also \cite[Lemma 3.41]{DCFtfoqGLNC}).
\end{Thm}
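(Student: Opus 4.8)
The plan is to prove the claim for the power sums $s_{q,Q}(k)$ first and then bootstrap to the elementary symmetric polynomials $\sigma_{q,Q}(i)$ via the Newton relations \cref{Newton}. I would start with self-adjointness of $s_{q,Q}(k)$, which is the short computation already signalled just after \cref{sqQk}: writing $A=XY$, the $^{*}$-structure gives $(A_{ij})^{*}=\sum_{m}(X_{im}Y_{mj})^{*}=\sum_{m}X_{jm}Y_{mi}=A_{ji}$, hence inductively $((A^{k})_{ij})^{*}=(A^{k})_{ji}$; since $(D^{2})_{ii}=q^{2(i-1)}$ and $q^{*}=q$, this gives $s_{q,Q}(k)^{*}=\sum_{i}q^{2(i-1)}((A^{k})_{ii})^{*}=s_{q,Q}(k)$.

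For centrality of $s_{q,Q}(k)$ it is enough to commute it past every algebra generator, i.e.~past every $X_{ij}$ and every $Y_{ij}$, since these generate $\OqQRMNC$. Past $X_{ij}$: by \cref{FRTforhigherordercommutation} the matrices $M=(XY)^{k}$ and $P=X$ satisfy the hypothesis $R_{12}P_{01}M_{02}=M_{02}R_{12}P_{01}$ of \cref{tracecommutation}, and $\Tr(D^{2}M)=s_{q,Q}(k)$ by \cref{sqQk}, so $[s_{q,Q}(k),X_{ij}]=0$ for all $i,j$. Past $Y_{ij}$: I would apply $^{*}$ to this identity and use $([a,b])^{*}=[b^{*},a^{*}]$ together with the self-adjointness just proved and $X_{ij}^{*}=Y_{ji}$, obtaining $[s_{q,Q}(k),Y_{ji}]=0$; letting $(i,j)$ run over $[N]^{2}$ gives it for all $i,j$. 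Hence $s_{q,Q}(k)$ is central.

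It then remains to handle the $\sigma_{q,Q}(i)$ for $0\leq i\leq N$. Here $\sigma_{q,Q}(0)=1$ is trivially self-adjoint and central; for $1\leq i\leq N$ the $q$-number $[i]=1+q^{2}+\dots+q^{2(i-1)}$ is a nonzero element of the base field (we work over the fraction field in $q$), so \cref{Newton} rearranges into the recursion
\[
\sigma_{q,Q}(i)=-\frac{1}{[i]}\sum_{j=1}^{i}(-1)^{j}s_{q,Q}(j)\,\sigma_{q,Q}(i-j),
\]
and I would induct on $i$. Each $s_{q,Q}(j)$ and, by the inductive hypothesis, each $\sigma_{q,Q}(i-j)$ is central and self-adjoint, so the right-hand side is a linear combination over the base field of products of central elements, hence $\sigma_{q,Q}(i)$ is central; and $(s_{q,Q}(j)\sigma_{q,Q}(i-j))^{*}=\sigma_{q,Q}(i-j)^{*}s_{q,Q}(j)^{*}=\sigma_{q,Q}(i-j)s_{q,Q}(j)=s_{q,Q}(j)\sigma_{q,Q}(i-j)$, the last step by centrality of $\sigma_{q,Q}(i-j)$, so $\sigma_{q,Q}(i)$ is self-adjoint. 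The load-bearing inputs are \cref{tracecommutation} and \cref{FRTforhigherordercommutation}, both already in hand, so there is no genuine obstacle; the only points needing a little care are the order reversal and sign when applying $^{*}$ (this is exactly what lets us avoid a separate FRT-type computation for the $Y_{ij}$) and noting that $[i]\neq 0$, so that the Newton recursion is legitimate.
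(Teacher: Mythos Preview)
Your proof is correct and follows essentially the same approach as the paper: establish self-adjointness of $s_{q,Q}(k)$ directly, use \cref{tracecommutation} together with \cref{FRTforhigherordercommutation} to get commutation with all $X_{ij}$, apply $^{*}$ to get commutation with all $Y_{ij}$, and then deduce the result for the $\sigma_{q,Q}(i)$ from the Newton relations \cref{Newton}. Your version simply spells out a few details the paper leaves implicit (the computation $(A^{k}_{ij})^{*}=A^{k}_{ji}$, the fact that $[i]\neq 0$ over the fraction field, and the induction on $i$).
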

\begin{proof} From \cref{tracecommutation} and \cref{FRTforhigherordercommutation}, it follows that the $s_{q,Q}(k)$ commute with all $X_{ij}$. Since the $s_{q,Q}(k)$ are self-adjoint by \cref{sqQk}, it follows that they also commute with all $X_{ij}^{*}$. For the $\sigma_{q,Q}(i)$, the result now follows from \cref{Newton}.
\end{proof}
In \cite{JWtcotreavqm}, the $\sigma_{q}(i)$ are described in terms of the entries $a_{ij}$ of the matrix $A=XY$. In order to state this result, we use the same notation as in \cite{JWtcotreavqm}. For $J\in\binom{[N]}{i}$ we write $\Sym(J)$ for the subgroup of $S_{N}$ consisting of permutations which fix the complement of $J$ in $[N]$ and we have the \emph{weight} $\wt(J)=\sum_{l=1}^{i}{j_{l}}$. For $\sigma\in S_{N}$ we have its \emph{exceedance} $e(\sigma)=|\{i\in[N]\mid\sigma(i)>i\}|$. Now \cite[Theorem 1.3]{JWtcotreavqm} (with $\sigma_{q}(k)=q^{2k}c_{k}$ by \cite[(1.4)]{JWtcotreavqm} and $q$ interchanged with $q^{-1}$, see \cref{remarkconventions}) states that
\begin{equation}
\sigma_{q}(i)=\sum_{J\in\binom{[N]}{i}}{\sum_{\sigma\in\Sym(J)}{q^{2(\wt(J)-i)}(-q)^{-l(\sigma)}q^{-e(\sigma)}a_{j_{1}\sigma(j_{1})}\dots a_{j_{i}\sigma(j_{i})}}}.\label{sigmaqiJW}
\end{equation}
In \cref{theoremsigmaqQiCB}, we describe the $\sigma_{q,Q}(i)$ inside $\OqQRMNC$ in terms of \mpq{} minors of $X$ and $Y$ and in \cref{invariance}, we prove an invariance property for the $\sigma_{q,Q}(i)$.
\begin{Exa}\label{exampleTtwothree} To simplify the calculations in this Example, we introduce the quotient $^{*}$-algebra of $\OqRMNC$ by the relations $X_{ij}=0$ for $1\leq j<i\leq N$ and $X_{ii}^{*}=X_{ii}$ for $i\in[N]$. We denote by $\pi_{T}$ the corresponding quotient map. In this case we write the generating matrices as $X=T$ and $Y=T^{*}$. Note that \cref{CH} also holds for $TT^{*}$ by applying $\pi_{T}$. In this case we write the coefficients as $\sigma_{q,T}(i)=\pi_{T}(\sigma_{q}(i))$. Moreover, $\pi_{T}$ is faithful on the \rea{}. In \cref{theoremsigmaqQiCB}, we describe the $\sigma_{q,Q}(i)$ where factors $X_{ij}$ are followed by factors $Y_{ij}$ or vice versa. Since $T_{ii}^{*}=T_{ii}$ for $i\in[N]$, it is natural to place the $T_{ii}$ in the middle. This is possible since $[T_{ii},T_{jk}]=0$ if $i\neq j$ and $i\neq k$. For $i=1$, it follows from \cref{Newton} that $\sigma_{q,Q}(1)=s_{q,Q}(1)$. For $i=N$ and $T$, we have in \cite{PScrfqm} that
\begin{equation}
\sigma_{q,T}(N)=q^{(N-1)N}T_{11}^{2}\dots T_{NN}^{2}.\label{sigmaqTN}
\end{equation}
These give all $\sigma_{q,T}(i)$ for $N=2$, $3$ in the first column of \cref{tableTtwothree} except for $\sigma_{q,T}(2)$ for $N=3$, which can be computed using e.g.~\cref{sigmaqQiGPS}, \cref{sigmaqiJW}, \cref{sigmaqQiseparated} or immediately from \cref{theoremsigmaqQiCB}. The second column can then be computed using the relations for the $T_{ij}$, or immediately from \cref{invariance}. The operation $'$ is defined in \cref{definitionprime}.
\vspace{8pt}
\begin{tabularx}{\linewidth}{@{}r*{2}{@{\hspace{0.4em}}c@{\hspace{0.4em}}>{\raggedright\arraybackslash}X}@{}}
 & & \multicolumn{1}{@{}c@{}}{$T_{ij}$ followed by $T_{ij}^{*}$\tikzmark{markstarback}} & & \multicolumn{1}{@{}c@{}}{\tikzmark{markstarfront}$T_{ij}^{*}$ followed by $T_{ij}$}\begin{tikzpicture}[overlay,remember picture]\draw[yshift=1em,->]({pic cs:markstarback})[bend left=20] to node[below]{$'$} ({pic cs:markstarfront});\draw[->]({pic cs:markstarfront})[bend left=20] to node[above]{$'$} ({pic cs:markstarback});\end{tikzpicture}\\
 & & \multicolumn{1}{@{}c@{}}{\downbracefill} & & \multicolumn{1}{@{}c@{}}{\downbracefill}\\*
\multicolumn{2}{@{}l@{}}{$N=2$} & \multicolumn{1}{@{}c@{}}{$D=\diag(1,q)$} & & \multicolumn{1}{@{}c@{}}{$D'=\diag(q,1)$}\\\hline
$\sigma_{q,T}(0)$ & $=$ & $1$ & $=$ & $1$\\
$\sigma_{q,T}(1)$ & $=$ & $T_{11}^{2}+q^{2}T_{22}^{2}+T_{12}T_{12}^{*}$ & $=$ & $q^{2}T_{11}^{2}+T_{22}^{2}+T_{12}^{*}T_{12}$\\
$\sigma_{q,T}(2)$ & $=$ & $q^{2}T_{11}^{2}T_{22}^{2}$ & $=$ & $q^{2}T_{11}^{2}T_{22}^{2}$\\
\\
\multicolumn{2}{@{}l@{}}{$N=3$} & \multicolumn{1}{@{}c@{}}{$D=\diag(1,q,q^{2})$} & & \multicolumn{1}{@{}c@{}}{$D'=\diag(q^{2},q,1)$}\\\hline
$\sigma_{q,T}(0)$ & $=$ & $1$ & $=$ & $1$\\
$\sigma_{q,T}(1)$ & $=$ & $T_{11}^{2}+q^{2}T_{22}^{2}+q^{4}T_{33}^{2}+T_{12}T_{12}^{*}+T_{13}T_{13}^{*}+q^{2}T_{23}T_{23}^{*}$ & $=$ & $q^{4}T_{11}^{2}+q^{2}T_{22}^{2}+T_{33}^{2}+q^{2}T_{12}^{*}T_{12}+T_{13}^{*}T_{13}+T_{23}^{*}T_{23}$\\
$\sigma_{q,T}(2)$ & $=$ & $q^{2}T_{11}^{2}T_{22}^{2}+q^{4}T_{11}^{2}T_{33}^{2}+q^{6}T_{22}^{2}T_{33}^{2}+q^{2}T_{23}T_{11}^{2}T_{23}^{*}+q^{2}T_{13}T_{22}^{2}T_{13}^{*}+q^{4}T_{12}T_{33}^{2}T_{12}^{*}+q^{2}T_{12}T_{23}T_{12}^{*}T_{23}^{*}-qT_{12}T_{23}T_{22}T_{13}^{*}-q^{3}T_{13}T_{22}T_{12}^{*}T_{23}^{*}$ & $=$ & $q^{6}T_{11}^{2}T_{22}^{2}+q^{4}T_{11}^{2}T_{33}^{2}+q^{2}T_{22}^{2}T_{33}^{2}+q^{4}T_{23}^{*}T_{11}^{2}T_{23}+q^{2}T_{13}^{*}T_{22}^{2}T_{13}+q^{2}T_{12}^{*}T_{33}^{2}T_{12}+q^{2}T_{12}^{*}T_{23}^{*}T_{12}T_{23}-qT_{13}^{*}T_{22}T_{12}T_{23}-q^{3}T_{12}^{*}T_{23}^{*}T_{22}T_{13}$\\
$\sigma_{q,T}(3)$ & $=$ & $q^{6}T_{11}^{2}T_{22}^{2}T_{33}^{2}$ & $=$ & $q^{6}T_{11}^{2}T_{22}^{2}T_{33}^{2}$\\*
\\*\caption{$\sigma_{q,T}(i)$ for $N=2$, $3$.}\label{tableTtwothree}
\end{tabularx}
In particular, $\sigma_{q,T}(i)$ for $N=2$, $3$ and $0\leq i\leq N$ satisfy \cref{observation} if the factors are arranged as in \cref{tableTtwothree}. The usage of $D$ and $D'$ comes from \cref{theoremsigmaqQiCB} and \cref{mpqminorofXandYwithDandDprime}.
\end{Exa}
\begin{Rem} In $D'$, we have the numbers $N-1,\dots,1,0$ in the exponents on the diagonal and in \cref{definitionprime}, the transpose $\rho$ over the anti-diagonal is involved. The matrix $C=\diag(N-1,\dots,1,0)$ and $\rho$ also appear in the Capelli identity, which states (see e.g.~\cite{NWanotCifspoHt}) that for $N\times N$-matrices $x$, where $x_{ij}$ are commuting variables, $\partial$, with $\partial_{ij}=\frac{\partial}{\partial x_{ij}}$, $E=x^{T}\partial$ and $F=\rho(x)^{T}\rho(\partial)$, we have
\[
\det(E+C)=\det(x)\det(\partial)=\det(\rho(F)+\rho(C)).
\]
Here, the column-determinant is used at the left and right hand side (we need to specify the ordering since the entries of $E+C$ and $\rho(F)+\rho(C)$ do not commute). The matrices $C$ and $\rho(C)$ appear in more formulas involving noncommutative matrices: the Turnbull and Howe-Umeda-Kostant-Sahi identities (see e.g.~\cite{FZcpoCaTifcit}), \cite[(1.9), (1.11), (1.13), (1.15), (1.16), (1.17), (1.19), (1.26), (1.28), (1.34), (1.36), (3.26), (3.28)]{CSSndCBfaCti} and \cite[Theorem 6, page 39]{CFRapoMm}.

In \cite{FZcpoCaTifcit}, we have $\det(x^{T}p+hC)$ with $p=h\partial$, where the commutative case corresponds to $h=0$. If we set $q=e^{\frac{h}{2}}$ then $D'=e^{\frac{h}{2}C}$ and we have the analogy between $\frac{h}{2}C+x^{T}p+\frac{h}{2}C$ and $e^{\frac{h}{2}C}X^{*}Xe^{\frac{h}{2}C}$ in \cref{theoremsigmaqQiCB} and \cref{invariance}.

In \cite[(3.2.3)]{NUWaqaotCi}, a quantum analogue of the Capelli identity is given, which is generalized to the \mpq{} case in \cite[Theorem 4.6]{JZCiomqlg}.

All this suggests a connection between these results, although there does not seem to be a common framework at first sight.
\end{Rem}
\section{Invariance of the coefficients}
The aim of this Section is to prove the invariance property in \cref{invariance}. Although this invariance property is highly non-trivial (e.g.~for $N=3$ already it is a long computation to verify that $\sigma_{q,Q}(2)=(\sigma_{q,Q}(2))'$ and if $N$ increases, the computations get more complex), we will give a short and elementary proof of this invariance.
\begin{Def}\label{definitionprime} We define $':[N]\to[N]:i\mapsto i'=N-i+1$. For $J\subset[N]$ we write $J'=N-J+1=\{j'\in[N]\mid j\in J\}$. We define $':\OqQRMNC\to\OqQRMNC$ as the unique $^{*}$-anti-isomorphism such that
\begin{align*}
q' & =q, & q_{ij}' & =q_{i'j'}, & X_{ij}' & =X_{j'i'}, & Y_{ij}' & =Y_{j'i'}.
\end{align*}
More generally, we extend $'$ to $n\times m$-matrices $A$ over $\OqQRMNC$ by
\[
(A')_{ij}=a_{n-j+1,m-i+1}'.
\]
Thus $'$ acts as follows:
\begin{enumerate}
\item Reverse the ordering of the factors in each term in each entry.
\item Substitute $q_{ij}$ by $q_{i'j'}$, $X_{ij}$ by $X_{j'i'}$ and $Y_{ij}$ by $Y_{j'i'}$. This means that we substitute $X_{ij}$ respectively $Y_{ij}$ by its image under transposing the matrix $X$ respectively $Y$ over the anti-diagonal.
\item Transpose the matrix over its anti-diagonal.
\end{enumerate}
\end{Def}
For example, $(qX_{12}X_{23}X_{22}X_{13}^{*})'=qX_{13}^{*}X_{22}X_{12}X_{23}$ for $N=3$. Note that this is consistent with \cref{DandDprime}, $'$ is an involution, $'\circ\pi_{T}=\pi_{T}\circ{}'$,
\begin{align*}
Q' & =Q^{T}, & X' & =X, & Y' & =Y, & R_{12}' & =R_{12}.
\end{align*}
The restriction of $'$ to $1\times 1$-matrices over $\OqMNC$ gives $\rho_{q}$ in \cite[Proposition 3.7.1]{PWqlg}.
\begin{Lem} The operation $'$ is well defined.
\end{Lem}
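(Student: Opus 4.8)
The statement to prove is that the operation $'$ from \cref{definitionprime} is well defined. Since $'$ is declared to be the unique $^{*}$-anti-isomorphism of $\OqQRMNC$ sending $q\mapsto q$, $q_{ij}\mapsto q_{i'j'}$, $X_{ij}\mapsto X_{j'i'}$, $Y_{ij}\mapsto Y_{j'i'}$, being well defined means: (a) the prescribed action on the base field of parameters is a well-defined (involutive) automorphism compatible with \cref{multiparam}, and (b) the prescribed action on generators extends to an algebra anti-homomorphism, i.e.\ it respects all the defining relations \cref{FRTOqQMNC} and \cref{FRTOqQRMNC}, with the relations read in reverse order because $'$ reverses products; and (c) it is compatible with the $^{*}$-structure. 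Uniqueness is automatic once existence is established, since the $q_{ij}$, $X_{ij}$, $Y_{ij}$ generate.

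The plan is to phrase everything in terms of the matrix $R$-matrix relations rather than the scalar relations \cref{relationsXX}, \cref{relationsXY}, because the $R$-matrix form is far more compact. The key observation I would establish first is a conjugation identity: if $w\in\MNC$ is the antidiagonal permutation matrix, $w=\sum_{i}e_{i'i}=\sum_i e_{i,N-i+1}$, then applying $'$ to the \emph{parameters} sends $R=\sum q_{ij}e_{ii}\otimes e_{jj}+(1-q^2)\sum_{i<j}e_{ij}\otimes e_{ji}$ to $R^{Q'}=\sum q_{i'j'}e_{ii}\otimes e_{jj}+(1-q^2)\sum_{i<j}e_{ij}\otimes e_{ji}$. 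I claim this equals $(w\otimes w)R_{21}(w\otimes w)$ up to the appropriate bookkeeping; more precisely, conjugating $R$ by $w\otimes w$ in both legs and flipping the two tensor legs turns the ordered sum $\sum_{i<j}$ into $\sum_{i>j}$ while simultaneously relabeling $i\mapsto i'$, and one checks the diagonal part transforms as $q_{ij}\mapsto q_{i'j'}$ exactly. In other words, the map "relabel indices by $'$'' is implemented on matrices by $M\mapsto (w^{\otimes k})\Sigma_{\mathrm{rev}}(M)(w^{\otimes k})$ where $\Sigma_{\mathrm{rev}}$ reverses the order of tensor legs. Since $'$ on the algebra is defined on matrices by $(A')_{ij}=a'_{n-j+1,m-i+1}$, i.e.\ antidiagonal transpose composed with the parameter-$'$, I would check that for the generating matrix $X$ one has $X'=wX^{T}w$ with the parameter substitution applied entrywise, and indeed the excerpt already records $X'=X$, $Y'=Y$, $R'_{12}=R_{12}$ as matrices — these are exactly the identities that make the relations invariant.

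The main steps, in order: (1) fix notation for the antidiagonal permutation $w$ and record $w^2=I_N$, $w^{T}=w$; (2) prove the parameter-level claim that $q\mapsto q$, $q_{ij}\mapsto q_{i'j'}$ is a well-defined involution of the field of fractions respecting \cref{multiparam} — this is immediate since $q_{i'i'}=1$ and $q_{i'j'}q_{j'i'}=q^2$; (3) prove the matrix identity $R'_{12}=R_{12}$ and $X'=X$, $Y'=Y$ (as displayed just after \cref{definitionprime}), reducing to the leg-reversal-plus-conjugation description; (4) apply $'$ to each defining relation. For instance, applying $'$ to $\hR_{12}X_{01}X_{02}=X_{01}X_{02}\hR_{12}$: the anti-homomorphism property reverses the product to give $X_{02}'X_{01}'\hR_{12}'=\hR_{12}'X_{02}'X_{01}'$ (with a relabeling of the auxiliary legs $1\leftrightarrow 2$ coming from the antidiagonal transpose acting on the matrix legs), and using $X'=X$, $\hR'=\hR$ this is $X_{02}X_{01}\hR_{12}=\hR_{12}X_{02}X_{01}$, which is again \cref{FRTOqQMNC} after conjugating by $\hR_{12}$ and using the Hecke relation, or simply recognized as an equivalent form. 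One does the same for the two relations in \cref{FRTOqQRMNC}; the second one, $X_{01}R_{12}Y_{02}=Y_{02}R_{12}X_{01}$, is manifestly symmetric enough that its $'$-image is again a relation of the same shape. (5) Check $^{*}$-compatibility: $(X_{ij}')^{*}=X_{j'i'}^{*}=Y_{i'j'}=(Y_{ij}^{*})'=(X_{ij}^{*})'$, and similarly on parameters $q_{ij}'^{\,*}=q_{i'j'}^{*}=q_{j'i'}=q_{ij}^{*\prime}$, so $'$ and $^{*}$ commute; combined with $'$ being an anti-homomorphism this makes it a $^{*}$-anti-isomorphism. (6) Since $'$ squares to the identity on generators, it is an involution, hence bijective, hence an isomorphism onto $\OqQRMNC$.

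The main obstacle is bookkeeping in step (4): one must be careful that the antidiagonal transpose on the $N\times N$ matrix leg of $X_{01}$, $Y_{02}$, $R_{12}$ interacts correctly with the reversal of the product order and with the relabeling of auxiliary tensor slots, so that the $'$-image of a relation indexed by legs $(1,2)$ comes out as a genuine defining relation rather than a relation with mismatched leg labels. I expect the cleanest route is to prove once and for all the ``naturality'' statement that for any matrices $M,P$ over $\OqQRMNC$ with compatible sizes, $(MP)'=P'M'$ and $(M_{0a}P_{0b})' = P_{0b}'M_{0a}'$ with the appropriate conjugation by $w$'s, and then feed the three defining relations through this black box; after that, invariance is a one-line check for each relation using $X'=X$, $Y'=Y$, $R'=R$. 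Alternatively — and this may be what the authors do — one verifies invariance directly on the scalar relations \cref{relationsXX} and \cref{relationsXY}: applying $'$ to \cref{relationsXX} with indices $i\ge k$ produces the relation \cref{relationsXX} with indices $l',j',k',i'$ in place of the original, and one checks $l'\le j'$ gives the ``$i\ge k$'' case after relabeling, the Kronecker and $\delta_{j>l}$ factors matching because $j>l\iff l'>j'$ is false — rather $j>l\iff j'<l'$, so the correction term lands in the right slot; the same verification for \cref{relationsXY} uses that $\delta_{jk}$, $\delta_{il}$ and the summation ranges $1\le m\le j-1$, $i+1\le n\le N$ transform correctly under $m\mapsto m'$ into $N\ge m'\ge (j-1)' = N-j+2 = j'+1$ and so on. Either way the content is routine once the index arithmetic $i>j\iff i'<j'$, $\sum_{m<j}\leftrightarrow\sum_{m'>j'}$ is tracked, so I would present the $R$-matrix version for brevity and relegate the scalar check to a remark.
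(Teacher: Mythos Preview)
Your approach matches the paper's: verify that $'$ preserves each defining relation using the matrix identities $X'=X$, $Y'=Y$, $R_{12}'=R_{12}$ recorded just after \cref{definitionprime}. The paper's proof is two lines: applying $'$ to $R_{12}X_{01}X_{02}=X_{02}X_{01}R_{12}$ (the $R$-form of \cref{FRTOqQMNC}) yields $X_{02}X_{01}R_{12}=R_{12}X_{01}X_{02}$, which is the same relation with the two sides swapped, and analogously for \cref{FRTOqQRMNC}; the parameter check is exactly your step (2).

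One correction worth flagging: your claim $\hR'=\hR$ is false in the multiparameter case, since $(\Sigma R)'=R'\Sigma'=R\Sigma$, and $R\Sigma\neq\Sigma R$ unless $q_{ij}=q_{ji}$ for all $i,j$. The paper sidesteps this by working with the $R$-form of the relations rather than the $\hR$-form; there $R_{12}'=R_{12}$ holds on the nose and the relation is literally self-dual under $'$, so no ``conjugating by $\hR_{12}$ and using the Hecke relation'' is needed. Likewise, with the natural identification of $\C^{N}\otimes\C^{N}$ with $\C^{N^{2}}$ one gets $(X_{01})'=X_{01}$, $(X_{02})'=X_{02}$ directly, so the leg-relabeling you anticipate in step (4) does not arise. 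Your additional checks of $^{*}$-compatibility and involutivity are correct and worth including, though the paper leaves them implicit.
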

\begin{proof} Applying $'$ to \cref{multiparam} respectively \cref{FRTOqQMNC} results in $q_{i'i'}=1$, $q_{i'j'}q_{j'i'}=q^{2}$, $i\neq j$ respectively $X_{02}X_{01}R_{12}=R_{12}X_{01}X_{02}$, which is precisely \cref{multiparam} respectively \cref{FRTOqQMNC} and analogously for the sets of relations in \cref{FRTOqQRMNC}.
\end{proof}
\begin{Lem} For an $n\times m$-matrix $A$ and an $m\times p$-matrix $B$ over $\OqQRMNC$ we have $(AB)'=B'A'$.
\end{Lem}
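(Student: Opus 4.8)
The plan is a direct entrywise computation, using only that $':\OqQRMNC\to\OqQRMNC$ reverses products and the index conventions of \cref{definitionprime}. Write $A=(a_{rk})$ with $r\in[n]$, $k\in[m]$ and $B=(b_{ks})$ with $k\in[m]$, $s\in[p]$, so that $AB$ is the $n\times p$-matrix with $(AB)_{rs}=\sum_{k=1}^{m}a_{rk}b_{ks}$. By \cref{definitionprime}, $(AB)'$ is the $p\times n$-matrix whose $(i,j)$-entry equals $(AB)_{n-j+1,\,p-i+1}'$ for $i\in[p]$, $j\in[n]$; expanding the product and using that $'$ is a $^{*}$-anti-isomorphism, this is
\[
((AB)')_{ij}=\Bigl(\sum_{k=1}^{m}a_{n-j+1,k}b_{k,p-i+1}\Bigr)'=\sum_{k=1}^{m}b_{k,p-i+1}'\,a_{n-j+1,k}'.
\]

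On the other hand, $B'$ is $p\times m$ and $A'$ is $m\times n$, so $B'A'$ is again $p\times n$, and by \cref{definitionprime} its $(i,j)$-entry is
\[
(B'A')_{ij}=\sum_{k=1}^{m}(B')_{ik}(A')_{kj}=\sum_{k=1}^{m}b_{m-k+1,\,p-i+1}'\,a_{n-j+1,\,m-k+1}'.
\]
The substitution $l=m-k+1$, a bijection of $[m]$ onto itself, turns the right-hand side into $\sum_{l=1}^{m}b_{l,p-i+1}'\,a_{n-j+1,l}'$, which is exactly the expression obtained above for $((AB)')_{ij}$. Hence $(AB)'=B'A'$.

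The only point requiring attention is the interplay of the two order reversals built into $'$: one coming from $'$ being an anti-homomorphism on $\OqQRMNC$, and one hidden in the transpose over the anti-diagonal at the matrix level. They are matched precisely by the index substitution $l=m-k+1$, and no genuine obstacle arises. For a sanity check one may specialise to $1\times 1$-matrices, where the matrix operation $'$ reduces to the algebra operation $'$ and the identity is trivial, and to $2\times 2$-matrices, where it can be verified directly.
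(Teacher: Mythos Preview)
Your proof is correct and is precisely the ``easy calculation'' the paper alludes to; it just spells out the entrywise computation that the paper leaves to the reader.
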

\begin{proof} This is an easy calculation.
\end{proof}
\begin{Prop}\label{tracechangeordering} If the $N\times N$-matrices $M$, $P$ in noncommuting variables satisfy the relation $M_{01}R_{12}P_{02}=P_{02}R_{12}M_{01}$ then $\Tr(D^{2}MP)=\Tr((D')^{2}PM)$.
\end{Prop}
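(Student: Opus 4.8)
The plan is to reduce the statement to the scalar relations between the entries of $M$ and $P$ and then to assemble the two traces from these with the correct powers of $q$.

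First I would write out $M_{01}R_{12}P_{02}=P_{02}R_{12}M_{01}$ in components, using the explicit form \cref{Rmatrix} of $R$. Comparing coefficients of $e_{ij}\otimes e_{kl}$ in the auxiliary legs $1$ and $2$ yields
\[
q_{jk}M_{ij}P_{kl}+\delta_{jk}(1-q^{2})\sum_{m<j}M_{im}P_{ml}=q_{il}P_{kl}M_{ij}+\delta_{il}(1-q^{2})\sum_{m>i}P_{km}M_{mj},\qquad i,j,k,l\in[N],
\]
which for $M=X$, $P=Y$ is exactly \cref{relationsXY}. Specialising to $k=j$, $l=i$ (and using $q_{ii}=1$) gives the relation carrying the ``diagonal'' products,
\[
M_{ij}P_{ji}-P_{ji}M_{ij}=(1-q^{2})\sum_{m>i}P_{jm}M_{mi}-(1-q^{2})\sum_{m<j}M_{im}P_{mi}.
\]

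Next I would note, from \cref{DandDprime}, that
\[
\Tr(D^{2}MP)=\sum_{i,j}q^{2(i-1)}M_{ij}P_{ji}
\qquad\text{and}\qquad
\Tr((D')^{2}PM)=\sum_{i,j}q^{2(N-j)}P_{ji}M_{ij},
\]
so the claim is the scalar identity $\sum_{i,j}q^{2(i-1)}M_{ij}P_{ji}=\sum_{i,j}q^{2(N-j)}P_{ji}M_{ij}$. (Equivalently, with $\Phi_{12}=((R_{12}^{t_{1}})^{-1})_{12}^{t_{1}}\Sigma_{12}$ the operator from the definition of $\mathcal{C}$ and $\mathcal{B}$, this reads $\Tr_{(1),(2)}(\Phi_{12}M_{02}P_{02})=\Tr_{(1),(2)}(\Phi_{12}P_{01}M_{01})$, since $\Tr_{(1)}\Phi_{12}=D_{2}^{2}$ and $\Tr_{(2)}\Phi_{12}=(D')_{1}^{2}$; and, using $\Sigma_{12}M_{01}=M_{02}\Sigma_{12}$, $\Sigma_{12}P_{02}=P_{01}\Sigma_{12}$ and $\hR_{12}=\Sigma_{12}R_{12}$, the hypothesis takes the form $M_{02}\hR_{12}P_{02}=P_{01}\hR_{12}M_{01}$.)

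The remaining work is to deduce the scalar identity from the entrywise relations. I would do this by summing them over all indices against a suitable family of $q$-weights, arranged so that the $M$-then-$P$ terms combine into $\Tr(D^{2}MP)$, the $P$-then-$M$ terms into $\Tr((D')^{2}PM)$, and the $(1-q^{2})$-correction terms (the sums over $m<j$ and over $m>i$) cancel by telescoping. The main obstacle is exactly this bookkeeping: a single instance of the relation is not enough, and in fact no weight supported on the relations with $k=j$, $l=i$ alone produces both traces simultaneously --- the correction there forces the weight to be $q^{2(i-1)}q^{2(N-j)}$, which then spoils the $P$-then-$M$ side --- so one must use the full family and track how the off-diagonal products $P_{jm}M_{mi}$ and $M_{im}P_{mi}$ recombine after summation. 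I expect the cleanest way to control this is an induction on $N$: the index $N$ enters the relations only through the range of $\sum_{m>i}$, so peeling off the $N$-th row and column of $M$ and $P$ reduces the $N$-case to the $(N-1)$-case plus explicit contributions from row and column $N$, with the base case immediate (and $N=1,2$ matching \cref{exampleTtwothree}).
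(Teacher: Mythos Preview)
Your overall plan --- expand $M_{01}R_{12}P_{02}=P_{02}R_{12}M_{01}$ in components and sum the resulting scalar identities with appropriate $q$-weights --- is exactly the paper's proof. The paper uses only the relations at position $e_{ij}\otimes e_{ji}$, multiplies by $q^{2(N-j+i-1)}=q^{2(i-1)}q^{2(N-j)}$, and sums over $i,j$; both sides then telescope directly to the two traces.

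The obstruction you describe is not real; it comes from an index slip in your specialisation. Setting $k=j$, $l=i$ in the general relation you wrote gives
\[
M_{ij}P_{ji}+(1-q^{2})\sum_{m<j}M_{im}P_{mi}=P_{ji}M_{ij}+(1-q^{2})\sum_{m>i}P_{jm}M_{mj},
\]
with $M_{mj}$ (not $M_{mi}$) in the last sum: the second index of $M$ is $\beta=j$, not $\delta=i$. With the correct index, the weight $q^{2(i-1)}q^{2(N-j)}$ works on \emph{both} sides. On the left, summing $(1-q^{2})\sum_{j>m}q^{2(N-j)}=1-q^{2(N-m)}$ cancels the $q^{2(N-j)}$ in the main term and leaves $\sum_{i,m}q^{2(i-1)}M_{im}P_{mi}=\Tr(D^{2}MP)$. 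On the right, summing $(1-q^{2})\sum_{i<m}q^{2(i-1)}=1-q^{2(m-1)}$ cancels the $q^{2(i-1)}$ in the main term and leaves $\sum_{j,m}q^{2(N-j)}P_{jm}M_{mj}=\Tr((D')^{2}PM)$. No induction on $N$ and no off-diagonal relations are needed.
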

\begin{proof} We write $M_{01}R_{12}P_{02}$ and $P_{02}R_{12}M_{01}$ out. In both of these expressions, we take the sum of $q^{2(N-j+i-1)}$ times the element on position $e_{ij}\otimes e_{ji}$, $i,j\in[N]$. Now the result follows.
\end{proof}
\begin{Lem} For all $k\geq 0$ we have
\begin{equation}
((XY)^{k}X)_{01}R_{12}Y_{02}=Y_{02}R_{12}((XY)^{k}X)_{01}.\label{FRTforhigherorder}
\end{equation}
\end{Lem}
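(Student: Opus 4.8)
The plan is to proceed by induction on $k$, mirroring exactly the structure of the proof of \cref{FRTforhigherordercommutation}, but now tracking the extra trailing factor of $X$ and the fact that we are multiplying $Y$ on leg $2$ on the \emph{right} side of the braiding. The base case $k=0$ is precisely the statement $X_{01}R_{12}Y_{02}=Y_{02}R_{12}X_{01}$, which is the first equation in the second set of relations in \cref{FRTOqQRMNC}. For the inductive step, assume $((XY)^{k}X)_{01}R_{12}Y_{02}=Y_{02}R_{12}((XY)^{k}X)_{01}$ and consider $((XY)^{k+1}X)_{01}=((XY)^{k}X)_{01}\cdot(Y X)_{01}$ — more precisely, I would write $((XY)^{k+1}X)_{01}R_{12}Y_{02}$ and insert the needed intermediate relations one leg at a time.

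The key steps, in order: first, use the relation $X_{01}R_{12}Y_{02}=Y_{02}R_{12}X_{01}$ from \cref{FRTOqQRMNC} (applied with the appropriate relabelling of legs so that the ``X'' factor is the last $X$ in $(XY)^{k+1}X$) to move $Y_{02}$ leftward past that trailing $X_{01}$, picking up a conjugation by $R_{12}$. Second, use the relation $X_{01}R_{12}Y_{02}=Y_{02}R_{12}X_{01}$ again, now for the $Y$ inside the last $XY$ block — actually here one wants to commute $Y_{02}$ past a $Y_{01}$, so I would instead invoke the first set of relations in \cref{FRTOqQRMNC}, namely $R_{12}Y_{02}Y_{01}=Y_{01}Y_{02}R_{12}$, to move $Y_{02}$ past $Y_{01}$. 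Third, move $Y_{02}$ past the next $X_{01}$ using \cref{FRTOqQRMNC} once more. At this point $Y_{02}$ has been moved past the final block $Y_{01}X_{01}$ (equivalently one full factor of $(XY)_{01}$, up to a conjugating $R_{12}$), and what remains on the left of $Y_{02}$ is $((XY)^{k}X)_{01}$ together with a single $R_{12}$; applying the inductive hypothesis then finishes the step. Throughout, the bookkeeping is that each pass of $Y_{02}$ across one matrix factor on leg $1$ costs one conjugation by $R_{12}$, and these conjugations telescope correctly because the relations in \cref{FRTOqQMNC} and \cref{FRTOqQRMNC} are precisely ``RTT''-type braiding relations.

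I expect the main obstacle to be purely notational: getting the leg labels and the order of the $R$-matrix conjugations exactly right when peeling off one $(XY)_{01}$ factor, since the relations in \cref{FRTOqQRMNC} are stated with specific leg placements ($Y_{02}Y_{01}$ versus $Y_{01}Y_{02}$, and $X_{01}R_{12}Y_{02}$ versus $X_{02}\hR_{12}Y_{02}=Y_{01}\hR_{12}X_{01}$). A clean way to avoid errors is to package the three moves (past $X$, past $Y$, past $X$ — or rather: past the trailing $X$, then past one $YX$ block) into a single lemma-style identity $(ZX)_{01}R_{12}Y_{02}=Y_{02}R_{12}(ZX)_{01}$ whenever $Z$ is a word of the form $(XY)^{j}$, and then chain it; but given that the excerpt already proved \cref{FRTforhigherordercommutation} by ``\cref{FRTOqQMNC}, \cref{FRTOqQRMNC} and induction,'' I would simply state that the same induction, with the trailing $X$ carried along and the braiding relations applied in the order above, yields \cref{FRTforhigherorder}.
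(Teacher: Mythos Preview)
Your approach is correct and is precisely the paper's: induction on $k$, with the base case $k=0$ being the second relation in \cref{FRTOqQRMNC} and the inductive step peeling off one trailing block $Y_{01}X_{01}$ using only the two relations in \cref{FRTOqQRMNC}. One small bookkeeping slip: your ``Third'' move is superfluous --- after passing $Y_{02}$ through the trailing $X_{01}$ (via $X_{01}R_{12}Y_{02}=Y_{02}R_{12}X_{01}$) and then through $Y_{01}$ (via $Y_{01}Y_{02}R_{12}=R_{12}Y_{02}Y_{01}$), you already have $((XY)^{k}X)_{01}R_{12}Y_{02}$ on the left and the inductive hypothesis applies directly; passing one more $X_{01}$ would leave $((XY)^{k})_{01}$, which does not match the hypothesis.
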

\begin{proof} This follows from \cref{FRTOqQRMNC} and induction.
\end{proof}
\begin{Thm}\label{invariance} For all $N\geq 2$, the coefficients $\sigma_{q,Q}(i)$, $0\leq i\leq N$ and the $s_{q,Q}(k)$, $k\geq 0$ are invariant under $'$ (see also \cite[Remark 3.42]{DCFtfoqGLNC}):
\begin{align*}
(\sigma_{q,Q}(i))' & =\sigma_{q,Q}(i), & (s_{q,Q}(k))' & =\Tr((D')^{2}(YX)^{k})=\Tr(D^{2}(XY)^{k})=s_{q,Q}(k).
\end{align*}
\end{Thm}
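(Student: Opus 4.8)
The plan is to prove the statement about the $s_{q,Q}(k)$ first and then deduce the statement about the $\sigma_{q,Q}(i)$ from it via the Newton relations \cref{Newton}. For the power sums, start from the definition $s_{q,Q}(k)=\Tr(D^{2}(XY)^{k})$. The key tool is \cref{tracechangeordering}: if $M_{01}R_{12}P_{02}=P_{02}R_{12}M_{01}$ then $\Tr(D^{2}MP)=\Tr((D')^{2}PM)$. Apply this with $M=(XY)^{k}X$ and $P=Y$; the required commutation relation is exactly \cref{FRTforhigherorder}. This gives $\Tr(D^{2}(XY)^{k}XY)=\Tr((D')^{2}Y(XY)^{k}X)=\Tr((D')^{2}(YX)^{k+1})$, i.e. for every $k\geq 1$ we get $s_{q,Q}(k)=\Tr((D')^{2}(YX)^{k})$. (For $k=0$ this is \cref{sqQzero}.) So it remains to see that $\Tr((D')^{2}(YX)^{k})=(s_{q,Q}(k))'=(\Tr(D^{2}(XY)^{k}))'$. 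Here I would use the basic properties of the operation $'$: it is a $^{*}$-anti-isomorphism, $(AB)'=B'A'$ for matrices, $X'=X$, $Y'=Y$, and — crucially — $(D^{2})'=(D')^{2}$ (consistent with \cref{DandDprime}, since $'$ on diagonal matrices reverses the diagonal). Writing the trace as a sum $\sum_i (D^2 (XY)^k)_{ii}$, applying $'$ entrywise, and using that $'$ on an $N\times N$ matrix composes the anti-transpose with the reversal of factors and the substitution, one checks $(\Tr(D^{2}(XY)^{k}))'=\Tr\big(((XY)^{k})'(D^{2})'\big)=\Tr((Y'X')^{k}(D')^{2})=\Tr((YX)^{k}(D')^{2})=\Tr((D')^{2}(YX)^{k})$, where the last step is cyclicity of the trace (the entries are being multiplied inside a scalar trace, so ordinary cyclicity applies). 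Combining the two computations yields $(s_{q,Q}(k))'=\Tr((D')^{2}(YX)^{k})=s_{q,Q}(k)$, which is the claimed chain of equalities.

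For the $\sigma_{q,Q}(i)$, proceed by induction on $i$ using the Newton relations \cref{Newton}. We have $\sigma_{q,Q}(0)=1$, which is fixed by $'$. Assume $(\sigma_{q,Q}(j))'=\sigma_{q,Q}(j)$ for all $j<k$. Apply $'$ to the Newton relation $[k]\sigma_{q,Q}(k)+\sum_{i=1}^{k}(-1)^{i}s_{q,Q}(i)\sigma_{q,Q}(k-i)=0$. Since $'$ is an anti-isomorphism it reverses products, but by \cref{selfadjointandcentral} the $s_{q,Q}(i)$ and $\sigma_{q,Q}(k-i)$ are central, so the order does not matter; since $[k]$ and the signs are scalars fixed by $'$, and by the power-sum case $(s_{q,Q}(i))'=s_{q,Q}(i)$ and by the induction hypothesis $(\sigma_{q,Q}(k-i))'=\sigma_{q,Q}(k-i)$, applying $'$ gives $[k](\sigma_{q,Q}(k))'+\sum_{i=1}^{k}(-1)^{i}s_{q,Q}(i)\sigma_{q,Q}(k-i)=0$. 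Subtracting from the original Newton relation gives $[k]\big((\sigma_{q,Q}(k))'-\sigma_{q,Q}(k)\big)=0$, and since $[k]\neq 0$ over the fraction field (it is $1+q^{2}+\dots+q^{2(k-1)}$, nonzero as $q$ is an indeterminate), we conclude $(\sigma_{q,Q}(k))'=\sigma_{q,Q}(k)$.

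The main obstacle is the careful bookkeeping in the power-sum step: one must verify that applying the anti-homomorphism $'$ to the scalar $\Tr(D^{2}(XY)^{k})$ — which a priori wants to reverse the order of the $2k$ matrix-entry factors inside each summand while also performing the anti-transpose substitution $X_{ij}\mapsto X_{j'i'}$, $Y_{ij}\mapsto Y_{j'i'}$, $q_{ij}\mapsto q_{i'j'}$ — reassembles exactly into the matrix product $\Tr((YX)^{k}(D')^{2})$. Concretely, one expands $\Tr(D^{2}(XY)^{k})=\sum q^{2(a_{1}-1)}X_{a_1 b_1}Y_{b_1 a_2}X_{a_2 b_2}\cdots Y_{b_k a_1}$ and applies $'$ termwise; the index relabeling $i\mapsto i'$ permutes the summation indices (a bijection, so the total sum is unchanged), the reversal of factors turns the cyclic word $X Y X Y\cdots$ read in one direction into $\cdots Y X Y X$ read in the other, and one must match the $q^{2(a_1-1)}$ weight against the reversed-diagonal weight coming from $(D')^{2}=\diag(q^{2(N-1)},\dots,q^{2},1)$. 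This is exactly the kind of calculation packaged in \cref{tracechangeordering}, so an alternative and cleaner route is to avoid the direct expansion entirely: establish $s_{q,Q}(k)=\Tr((D')^{2}(YX)^{k})$ via \cref{tracechangeordering}+\cref{FRTforhigherorder} as above, establish $(s_{q,Q}(k))'=s_{q,Q}(k)$ from self-adjointness (it is self-adjoint by \cref{sqQk}, and $'$ agrees with $*$ composed with an isomorphism-type argument) — or, most simply, observe that $(\Tr(D^2 M))'$ for any matrix $M$ equals $\Tr((D')^2 M')$ by the three-step description of $'$ together with trace cyclicity, applied with $M=(XY)^k$ so that $M'=(YX)^k$. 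I would present the argument in this second, structural form, since it isolates the only genuinely computational fact ($(\Tr(D^2 M))'=\Tr((D')^2 M')$, which follows directly from the definition of $'$ on matrices and cyclicity of the trace) and makes the rest formal.
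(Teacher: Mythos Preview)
Your proposal is correct and follows essentially the same route as the paper: first establish $s_{q,Q}(k)=\Tr((D')^{2}(YX)^{k})$ from \cref{tracechangeordering} applied with $M=(XY)^{k-1}X$, $P=Y$ via \cref{FRTforhigherorder}, identify this with $(s_{q,Q}(k))'$ using the matrix identity $(\Tr(D^{2}M))'=\Tr((D')^{2}M')$ (your cyclicity step is legitimate since $(D')^{2}$ has scalar entries), and then deduce invariance of the $\sigma_{q,Q}(i)$ from the Newton relations \cref{Newton}. The paper's own proof is the same argument compressed into two sentences; your elaboration of the bookkeeping (including the use of centrality to neutralize the order reversal in the Newton step) fills in exactly the details the paper leaves implicit.
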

\begin{proof} For the $s_{q,Q}(k)$, this follows from \cref{tracechangeordering} and \cref{FRTforhigherorder}. For the $\sigma_{q,Q}(i)$, the invariance now follows from \cref{Newton}.
\end{proof}
While \cite[Theorem 1.3]{JWtcotreavqm} (\cref{sigmaqiJW}) describes the $\sigma_{q}(i)$ in terms of the entries of the matrix $A=XY$ and the exceedance, we can analogously describe the $\sigma_{q}(i)$ using the entries of the matrix $B=YX$ and the \emph{anti-exceedance} $a(\sigma)=|\{i\in[N]\mid\sigma(i)<i\}|$ for $\sigma\in S_{N}$.
\begin{Thm} For $N\geq 2$ and $0\leq i\leq N$ we have
\begin{equation}
\sigma_{q}(i)=\sum_{J\in\binom{[N]}{i}}{\sum_{\sigma\in\Sym(J)}{q^{2(iN-\wt(J))}(-q)^{-l(\sigma)}q^{-a(\sigma)}b_{\sigma(j_{1})j_{1}}\dots b_{\sigma(j_{i})j_{i}}}}.\label{sigmaqiJWafterinvariance}
\end{equation}
\end{Thm}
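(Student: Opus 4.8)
The plan is to derive \cref{sigmaqiJWafterinvariance} from \cite[Theorem 1.3]{JWtcotreavqm} (\cref{sigmaqiJW}) by applying the operation $'$ to both sides and identifying the result. Since $\sigma_{q}(i)$ is invariant under $'$ by \cref{invariance} (specializing $Q$ to $q_{ij}=q$), the left-hand side is unchanged, so it suffices to compute the image of the right-hand side of \cref{sigmaqiJW} under $'$ and check that it equals the right-hand side of \cref{sigmaqiJWafterinvariance}.

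First I would recall that $'$ is a $^*$-anti-isomorphism with $q'=q$, $X_{ij}'=X_{j'i'}$, $Y_{ij}'=Y_{j'i'}$, hence on $A=XY$ we get $(a_{kl})'=(A')_{l'k'}=a_{l'k'}$ since $A'=X'Y'=XY=A$ wait — more carefully, $A' = (XY)' = Y'X' = YX = B$, so $(a_{kl})' = b_{l'k'}$. Thus applying $'$ to a term $a_{j_1\sigma(j_1)}\cdots a_{j_i\sigma(j_i)}$ (reversing the order of factors and sending $a_{kl}\mapsto b_{l'k'}$) yields $b_{\sigma(j_i)'j_i'}\cdots b_{\sigma(j_1)'j_1'}$. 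The next step is a change of index: set $K=J'$, which runs over $\binom{[N]}{i}$ as $J$ does, and conjugate $\sigma\in\Sym(J)$ by $'$ to get $\tau=\,'\sigma\,'\in\Sym(K)$ (so $\tau(k)=\sigma(k')'$). One then checks the combinatorial bookkeeping: $\wt(J')=\sum_{j\in J}(N-j+1)=i(N+1)-\wt(J)$, so $2(\wt(J)-i)$ should transform into the exponent $2(iN-\wt(K))$ in \cref{sigmaqiJWafterinvariance} after rewriting in terms of $K$; and conjugation by the order-reversing involution $'$ sends $l(\sigma)$ to $l(\tau)$ (it reverses the order on the index set, but a descent of $\sigma$ becomes a descent of $\tau$ because both the arguments and the values are reflected), while the exceedance $e(\sigma)$ becomes the anti-exceedance $a(\tau)$ since $\sigma(i)>i \iff \sigma(i)'<i' \iff \tau(i')<i'$. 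Finally, reversing the product of the $b$'s is harmless: either observe that $\sigma_q(i)$ is a single well-defined element so any valid rewriting of the summand is allowed, or note that within a fixed row-set the relevant $b$'s satisfy the relations needed to reorder them into the stated order $b_{\tau(k_1)k_1}\cdots b_{\tau(k_i)k_i}$ — but in fact no reordering argument is needed if one simply defines the sum with the factors in the order produced, and then remarks this equals the displayed order.

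The main obstacle I expect is the last point: matching the \emph{ordering} of the noncommutative factors. Applying $'$ literally reverses the order of the $b$-factors, so one gets $b_{\tau(k_i)k_i}\cdots b_{\tau(k_1)k_1}$ rather than the increasing-index order $b_{\tau(k_1)k_1}\cdots b_{\tau(k_i)k_i}$ written in \cref{sigmaqiJWafterinvariance}. Resolving this requires either invoking that \cite[Theorem 1.3]{JWtcotreavqm} itself is insensitive to this choice of order (because, as noted around \cref{observation} and in the discussion of $\Sym(J)$, the summand is built so that the product of the $a_{j_l\sigma(j_l)}$ is order-independent up to the stated normalization — the $q$-powers $(-q)^{-l(\sigma)}q^{-e(\sigma)}$ are precisely the ones that make this work), and the same structural fact holds for the $b$-version; or a direct check using the commutation relations \cref{relationsXX}, \cref{relationsXY} for the entries of $B=YX$. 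I would handle it by the first route: state that reversing the factor order in the summand leaves it unchanged (citing the analysis in \cite{JWtcotreavqm}, or by symmetry with the fact that \cref{sigmaqiJW} is well defined), so that after applying $'$ and reindexing by $K=J'$, $\tau=\,'\sigma\,'$ we land exactly on \cref{sigmaqiJWafterinvariance}. The remaining identities ($l$, exceedance $\leftrightarrow$ anti-exceedance, and the weight computation) are short and routine once the conjugation-by-$'$ picture is set up.
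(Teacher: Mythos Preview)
Your approach is correct and is exactly the paper's: the paper's proof is the single sentence ``This follows by applying \cref{invariance} to \cref{sigmaqiJW}.'' Your combinatorial bookkeeping ($\wt(J')=i(N+1)-\wt(J)$, $l(\tau)=l(\sigma)$, $e(\sigma)=a(\tau)$) is right.

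However, the ``main obstacle'' you flag is not an obstacle at all, and your proposed fixes are unnecessary. When you apply $'$ to $a_{j_{1}\sigma(j_{1})}\cdots a_{j_{i}\sigma(j_{i})}$ you get $b_{\sigma(j_{i})'j_{i}'}\cdots b_{\sigma(j_{1})'j_{1}'}$. Now if $J=\{j_{1}<\cdots<j_{i}\}$ and $K=J'$, then since $'$ is order-reversing on $[N]$ the elements of $K$ listed increasingly are $k_{1}=j_{i}',\,k_{2}=j_{i-1}',\,\dots,\,k_{i}=j_{1}'$. Hence the reversed product is already $b_{\tau(k_{1})k_{1}}\cdots b_{\tau(k_{i})k_{i}}$ in the increasing-index order stated in \cref{sigmaqiJWafterinvariance}: the reversal coming from the anti-isomorphism and the reversal coming from $j\mapsto j'$ cancel. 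No appeal to order-independence of the Jordan--White summand or to commutation relations for $B$ is needed.
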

\begin{proof} This follows by applying \cref{invariance} to \cref{sigmaqiJW}.
\end{proof}
\section{\CB{} for the coefficients in the quantum Cayley-Ha\-mil\-ton formula for the \rea{}}
First, we rearrange the factors in \cref{sigmaqQiGPS} in order to separate the $X_{ij}$ and $Y_{ij}$. We will use \cref{YB}, \cref{FRTOqQMNC} and \cref{FRTOqQRMNC} and we write $\hR_{i}=\hR_{i,i+1}$, as in \cite{PScrfqm}.
\begin{Lem} For $i\geq 3$ and $1\leq k\leq i-2$ we have
\begin{gather}
\hR_{i-k}X_{01}Y_{01}\hR_{1}\dots\hR_{i-1}=X_{01}Y_{01}\hR_{1}\dots\hR_{i-1}\hR_{i-k-1},\label{formulaRXY}\\
(\hR_{i-1}\dots\hR_{i-k})(X_{01}Y_{01}\hR_{1}\dots\hR_{i-1})=(X_{01}Y_{01}\hR_{1}\dots\hR_{i-2})(\hR_{i-1}\dots\hR_{i-k-1}),\label{formulaRRXY}\\
\hR_{i-1}(X_{01}Y_{01}\hR_{1}\dots\hR_{i-1})^{i-2}=(X_{01}Y_{01}\hR_{1}\dots\hR_{i-2})^{i-2}\hR_{i-1}\dots\hR_{1},\label{formulaRXYR}\\
Y_{0k}\hR_{i-1}\dots\hR_{k}X_{0k}=\hR_{i-1}\dots\hR_{k+1}X_{0,k+1}\hR_{k}Y_{0,k+1},\label{formulaYRX}\\
Y_{0,i-1}\dots Y_{01}\hR_{i-1}\dots\hR_{1}X_{01}=X_{0i}\hR_{i-1}\dots\hR_{1}Y_{0i}\dots Y_{02},\label{formulaYYRX}\\
Y_{0i}\dots Y_{01}\hR_{1}\dots\hR_{i-1}=\hR_{1}\dots\hR_{i-1}Y_{0i}\dots Y_{01}.\label{formulaYYR}
\end{gather}
\end{Lem}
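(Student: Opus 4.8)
The plan is to prove each of the six identities \cref{formulaRXY}--\cref{formulaYYR} by reducing it to the basic relations already available: the braid relation in \cref{YB}, the FRT relations \cref{FRTOqQMNC} for $X$ with itself, and the mixed relations \cref{FRTOqQRMNC} governing how $X$ and $Y$ interact through $R$. The common mechanism is that $\hR_i$ only touches tensor legs $i$ and $i+1$, so it commutes with any $X_{01}$, $Y_{01}$ or $\hR_j$ whose active legs are disjoint from $\{i,i+1\}$, and interacts with a neighbour $\hR_{i\pm 1}$ only via the braid relation. So each proof is a bookkeeping exercise: track which legs each factor acts on, move $\hR$'s past factors they commute with, and apply the braid relation or an FRT relation exactly when two non-commuting factors meet.

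Concretely, I would proceed as follows. For \cref{formulaRXY}: since $1\le i-k\le i-2$, the factor $\hR_{i-k}$ commutes with $X_{01}Y_{01}$ (legs $0,1$ versus $i-k,i-k+1$ with $i-k\ge 2$) and with every $\hR_j$ for $j\notin\{i-k-1,i-k,i-k+1\}$; pushing it rightward through $\hR_1\cdots\hR_{i-1}$ and using the braid relation $\hR_{i-k}\hR_{i-k-1}\hR_{i-k} = \hR_{i-k-1}\hR_{i-k}\hR_{i-k-1}$ (equivalently, the $\hR$-version in \cref{YB}) at the one place where it gets stuck produces the trailing $\hR_{i-k-1}$ on the right-hand side. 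Then \cref{formulaRRXY} follows by iterating \cref{formulaRXY} over $k$ (or by a direct induction on $k$ of the same flavour), and \cref{formulaRXYR} is the special case $k=i-2$ of \cref{formulaRRXY} applied repeatedly to build up the power $(X_{01}Y_{01}\hR_1\cdots\hR_{i-1})^{i-2}$, collecting the braid-moved factors into the tail $\hR_{i-1}\cdots\hR_1$; this last one is the bookkeeping-heaviest of the ``pure $\hR$'' identities and is where I expect to spend the most care lining up indices.

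For the identities involving $X$ and $Y$ moving past each other, \cref{formulaYRX} is the key lemma: $\hR_{i-1}\cdots\hR_k$ conjugates the pair of legs, and the one genuinely non-trivial step is $Y_{0,k}\hR_k X_{0,k} = X_{0,k+1}\hR_k Y_{0,k+1}$ up to the spectator factors $\hR_{i-1}\cdots\hR_{k+1}$, which is exactly (a relabelled leg version of) the second mixed relation in \cref{FRTOqQRMNC}, namely $X_{01}R_{12}Y_{02} = Y_{02}R_{12}X_{01}$ rewritten with $\hR$. Then \cref{formulaYYRX} follows by applying \cref{formulaYRX} successively for $k=1,2,\dots,i-1$, each application peeling off one $Y$ from the left block $Y_{0,i-1}\cdots Y_{01}$ and depositing it into the right block, with $X_{01}$ migrating to $X_{0i}$; an induction on $i$ is the clean way to write this. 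Finally \cref{formulaYYR} is the easiest: $\hR_1\cdots\hR_{i-1}$ acting on legs $1,\dots,i$ conjugates the ordered product $Y_{0i}\cdots Y_{01}$ into the ordered product in the opposite leg-order, and since the first relation in \cref{FRTOqQRMNC} says $\hR_{12}Y_{02}Y_{01} = Y_{02}Y_{01}\hR_{12}$, i.e.\ $Y$ with itself satisfies an FRT-type relation that makes such an ordered product symmetric under the relevant leg swaps, the product $Y_{0i}\cdots Y_{01}$ actually commutes with each $\hR_k$, $1\le k\le i-1$, hence with the whole braid word; a one-line induction on $i$ suffices. The main obstacle throughout is purely combinatorial: getting the leg indices and the order of the $\hR$-tails exactly right in \cref{formulaRXYR} and \cref{formulaYYRX}, where several reindexing inductions are superimposed — there is no conceptual difficulty, only the risk of an off-by-one error, so I would set up the inductions explicitly and verify the base cases ($i=3$) by hand against \cref{sigmaqQiGPS}.
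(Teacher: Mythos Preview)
Your proposal is correct and follows essentially the same approach as the paper: \cref{formulaRXY} via distant commutation plus one braid move, \cref{formulaRRXY} and \cref{formulaRXYR} by iterating \cref{formulaRXY}, \cref{formulaYRX} by commuting the spectator $\hR$'s and applying the mixed relation from \cref{FRTOqQRMNC}, \cref{formulaYYRX} by iterating \cref{formulaYRX}, and \cref{formulaYYR} by induction on $i$ using the first relation in \cref{FRTOqQRMNC}. Your observation that $Y_{0i}\cdots Y_{01}$ commutes with each individual $\hR_k$ is a slightly cleaner way to phrase \cref{formulaYYR} than the paper's induction, but the content is the same.
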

\begin{proof} The left hand side of \cref{formulaRXY} equals
\begin{align*}
 & X_{01}Y_{01}\hR_{1}\dots\hR_{i-k-2}\hR_{i-k}\hR_{i-k-1}\hR_{i-k}\hR_{i-k+1}\dots\hR_{i-1}\\
={} & X_{01}Y_{01}\hR_{1}\dots\hR_{i-k-2}\hR_{i-k-1}\hR_{i-k}\hR_{i-k-1}\hR_{i-k+1}\dots\hR_{i-1}\\
={} & X_{01}Y_{01}\hR_{1}\dots\hR_{i-1}\hR_{i-k-1},
\end{align*}
where we used \cref{YB}. Now \cref{formulaRRXY} follows from \cref{formulaRXY} and \cref{formulaRXYR} follows from \cref{formulaRRXY}. The left hand side of \cref{formulaYRX} equals $\hR_{i-1}\dots\hR_{k+1}Y_{0k}\hR_{k}X_{0k}$ and \cref{formulaYRX} follows from \cref{FRTOqQRMNC}. By \cref{formulaYRX}, the left hand side of \cref{formulaYYRX} equals
\[
Y_{0,i-1}\hR_{i-1}X_{0,i-1}\hR_{i-2}\dots\hR_{1}Y_{0,i-1}\dots Y_{02},
\]
which equals the right hand side of \cref{formulaYYRX} by \cref{FRTOqQRMNC}. For $i=2$, \cref{formulaYYR} follows from \cref{FRTOqQRMNC}. By induction, the left hand side of \cref{formulaYYR} equals
\[
Y_{0i}\hR_{1}\dots\hR_{i-2}Y_{0,i-1}\dots Y_{01}\hR_{i-1}=\hR_{1}\dots\hR_{i-2}\underbrace{Y_{0i}Y_{0,i-1}\hR_{i-1}}_{=\hR_{i-1}Y_{0i}Y_{0,i-1}}Y_{0,i-2}\dots Y_{01},
\]
which equals the right hand side of \cref{formulaYYR}.
\end{proof}
\begin{Lem} For $i\geq 3$ and $2\leq k\leq i-1$ we have
\begin{gather}
\hR_{i-1}\dots\hR_{i-k+1}\hR_{i-k}\dots\hR_{i-1}=\hR_{i-k}\dots\hR_{i-1}\hR_{i-2}\dots\hR_{i-k},\label{formulaRRRR}\\
(\hR_{1}\dots\hR_{i-1})^{k}=(\hR_{1}\dots\hR_{i-2})^{k}\hR_{i-1}\dots\hR_{i-k},\label{formulaRR}\\
(\hR_{1}\dots\hR_{i-2})^{i-1}\hR_{i-1}\dots\hR_{1}=(\hR_{1}\dots\hR_{i-1})^{i-1}.\label{formulaRRR}
\end{gather}
\end{Lem}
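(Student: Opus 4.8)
The plan is to prove all three identities purely within the braid monoid generated by the $\hR_{i}$: the only facts used are the braid relation $\hR_{j}\hR_{j+1}\hR_{j}=\hR_{j+1}\hR_{j}\hR_{j+1}$ and far-commutativity $\hR_{j}\hR_{l}=\hR_{l}\hR_{j}$ for $|j-l|\geq 2$ from \cref{YB}; neither the Hecke condition \cref{Hecke} nor any property of $X$ or $Y$ enters. The three statements form a chain: \cref{formulaRRRR} is the technical core, \cref{formulaRR} follows from it by induction on $k$, and \cref{formulaRRR} is exactly the case $k=i-1$ of \cref{formulaRR} (where then $\hR_{i-1}\dots\hR_{i-k}=\hR_{i-1}\dots\hR_{1}$).

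First I would establish \cref{formulaRRRR} by induction on $k$, the case $k=2$ being the bare braid relation $\hR_{i-1}\hR_{i-2}\hR_{i-1}=\hR_{i-2}\hR_{i-1}\hR_{i-2}$. Writing $L_{k}$ and $R_{k}$ for the two sides, the inductive step reduces to the two recursions $L_{k+1}=\hR_{i-k-1}L_{k}\hR_{i-k-1}$ and $R_{k+1}=\hR_{i-k-1}R_{k}\hR_{i-k-1}$. The recursion for $R_{k}$ is immediate, since prepending and appending $\hR_{i-k-1}$ merely lengthens the outermost ascending and descending runs. For $L_{k}$ one isolates the central block $\hR_{i-k}\hR_{i-k-1}\hR_{i-k}$, rewrites it as $\hR_{i-k-1}\hR_{i-k}\hR_{i-k-1}$ by the braid relation, and then slides the two new copies of $\hR_{i-k-1}$ outward past the factors $\hR_{i-k+1},\dots,\hR_{i-1}$ with which they commute, exposing $L_{k}$ in the middle. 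Since $L_{k}=R_{k}$ by the inductive hypothesis, $L_{k+1}=R_{k+1}$; the generator $\hR_{i-k-1}$ used in the step exists precisely because $2\leq k\leq i-1$.

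Next I would prove \cref{formulaRR} by induction on $k$, the case $k=1$ being trivial. Set $\delta=\hR_{1}\dots\hR_{i-2}$ and $\gamma=\hR_{1}\dots\hR_{i-1}=\delta\hR_{i-1}$. From the inductive hypothesis $\gamma^{k+1}=\delta^{k}(\hR_{i-1}\dots\hR_{i-k})\delta\hR_{i-1}$, so it suffices to check $(\hR_{i-1}\dots\hR_{i-k})\delta\hR_{i-1}=\delta(\hR_{i-1}\dots\hR_{i-k-1})$. In the left-hand side the initial factors $\hR_{1},\dots,\hR_{i-k-2}$ of $\delta$ commute past $\hR_{i-k},\dots,\hR_{i-1}$ and can be pulled all the way to the left; what remains is exactly $\hR_{1}\dots\hR_{i-k-2}$ times the left-hand side of \cref{formulaRRRR} with parameter $k+1$ in place of $k$. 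Applying \cref{formulaRRRR} and then recombining $\hR_{1}\dots\hR_{i-k-2}$ with the emerging ascending run $\hR_{i-k-1}\dots\hR_{i-1}$ into $\gamma=\delta\hR_{i-1}$ yields $\delta(\hR_{i-1}\dots\hR_{i-k-1})$, closing the induction for $1\leq k\leq i-1$. The special case $k=i-1$ is \cref{formulaRRR}.

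The work here is bookkeeping rather than idea: one must track which descending and ascending runs occur at each stage, verify that every commutation invoked is between generators whose indices differ by at least two, and confirm that the instance of \cref{formulaRRRR} invoked inside the step for \cref{formulaRR} (the one with parameter $k+1$) stays within its own permitted range $2\leq k+1\leq i-1$. The single point needing a little foresight is recognizing that, after the far-commutativity reshuffle, the residual word in the \cref{formulaRR} step is an exact instance of \cref{formulaRRRR}.
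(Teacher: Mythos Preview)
Your proof is correct and follows the same approach as the paper: both argue entirely within the braid relations, prove \cref{formulaRRRR} by induction via the recursion $L_{k+1}=\hR_{i-k-1}L_{k}\hR_{i-k-1}$ obtained from a single braid move on the central triple plus far-commutations, prove \cref{formulaRR} by induction on $k$ using \cref{formulaRRRR} after commuting the low-index factors of $\delta$ to the left, and read off \cref{formulaRRR} as the case $k=i-1$. The only cosmetic differences are that you phrase the inductions as $k\to k+1$ (starting from the trivial $k=1$ instance of \cref{formulaRR}) whereas the paper writes them as $k-1\to k$ with base case $k=2$.
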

\begin{proof} For $k=2$, \cref{formulaRRRR} follows from \cref{YB}. Now we use induction. The left hand side of \cref{formulaRRRR} equals
\begin{align*}
 & \hR_{i-1}\dots\hR_{i-k+2}\hR_{i-k+1}\hR_{i-k}\hR_{i-k+1}\dots\hR_{i-1}\\
={} & \hR_{i-1}\dots\hR_{i-k+2}\hR_{i-k}\hR_{i-k+1}\hR_{i-k}\hR_{i-k+2}\dots\hR_{i-1}\\
={} & \hR_{i-k}\hR_{i-1}\dots\hR_{i-k+2}\hR_{i-k+1}\dots\hR_{i-1}\hR_{i-k}\\
={} & \hR_{i-k}\hR_{i-k+1}\dots\hR_{i-1}\hR_{i-2}\dots\hR_{i-k+1}\hR_{i-k},
\end{align*}
where the last equality follows by induction. This equals the right hand side of \cref{formulaRRRR}. For $k=2$, \cref{formulaRR} follows from
\[
(\hR_{1}\dots\hR_{i-1})(\hR_{1}\dots\hR_{i-1})=\hR_{1}\dots\hR_{i-2}\hR_{1}\dots\hR_{i-3}\underbrace{\hR_{i-1}\hR_{i-2}\hR_{i-1}}_{=\hR_{i-2}\hR_{i-1}\hR_{i-2}},
\]
which equals the right hand side of \cref{formulaRR}. By induction, the left hand side of \cref{formulaRR} equals
\begin{align*}
 & (\hR_{1}\dots\hR_{i-2})^{k-1}\hR_{i-1}\dots\hR_{i-k+1}\hR_{1}\dots\hR_{i-1}\\
={} & (\hR_{1}\dots\hR_{i-2})^{k-1}\hR_{1}\dots\hR_{i-k-1}\hR_{i-1}\dots\hR_{i-k+1}\hR_{i-k}\dots\hR_{i-1},
\end{align*}
which equals the right hand side of \cref{formulaRR} by \cref{formulaRRRR}. Now \cref{formulaRRR} follows from \cref{formulaRR} by taking $k=i-1$.
\end{proof}
Now we calculate the middle part in \cref{sigmaqQiGPS} in order to have $X$ in front and $Y$ at the back.
\begin{Lem} For $i\geq 2$ we have
\begin{equation}
(X_{01}Y_{01}\hR_{1}\dots\hR_{i-1})^{i}=(X_{01}\dots X_{0i})(\hR_{1}\dots\hR_{i-1})^{i}(Y_{0i}\dots Y_{01}).\label{middlepartsigmaqQi}
\end{equation}
\end{Lem}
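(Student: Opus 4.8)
The plan is to prove \cref{middlepartsigmaqQi} by induction on $i$, pushing all $X$-factors to the left and all $Y$-factors to the right one index-block at a time, using the commutation lemmas just established. The base case $i=2$ should follow directly: $(X_{01}Y_{01}\hR_{1})^{2}=X_{01}Y_{01}\hR_{1}X_{01}Y_{01}\hR_{1}$, and using the second relation in \cref{FRTOqQRMNC} (in the form $Y_{01}\hR_{1}X_{01}=X_{02}\hR_{1}Y_{02}$, which is \cref{formulaYRX} with $i=2$, $k=1$) together with \cref{FRTOqQMNC} and \cref{formulaYYR}, this rearranges to $X_{01}X_{02}\hR_{1}\hR_{1}Y_{02}Y_{01}$, which is the right-hand side since $(\hR_{1})^{2}$ is the only factor appearing.

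For the inductive step, I would write $(X_{01}Y_{01}\hR_{1}\dots\hR_{i-1})^{i}=(X_{01}Y_{01}\hR_{1}\dots\hR_{i-1})^{i-1}\cdot(X_{01}Y_{01}\hR_{1}\dots\hR_{i-1})$ and first apply \cref{formulaRXYR} to move one copy of $\hR_{i-1}$ out of the $(i-2)$-fold product; more precisely I expect to use \cref{formulaRR} and \cref{formulaRRR} to reorganize the accumulated $\hR$-words so that the power $(\hR_{1}\dots\hR_{i-1})^{i}$ emerges, while \cref{formulaRXY} and \cref{formulaRRXY} let the trailing $\hR_{i-k}$'s commute past the $X_{01}Y_{01}\hR_{1}\dots\hR_{i-1}$ blocks. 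The $X$-factors get transported to the front via \cref{FRTforhigherordercommutation} and \cref{FRTOqQMNC}, producing $X_{01}X_{02}\cdots X_{0i}$; the $Y$-factors get collected at the back via \cref{formulaYRX}, \cref{formulaYYRX} and \cref{formulaYYR}, producing $Y_{0i}\cdots Y_{01}$. The key bookkeeping point is that every time a $Y_{0k}$ slides rightward past an $\hR$-block using \cref{formulaYRX}, the index $k$ is incremented and an $X$-factor is simultaneously created at a higher slot, which is exactly what \cref{formulaYYRX} packages at the level of a full $(i-1)$-block; iterating this is what turns the $i$-th factor's $X_{01}Y_{01}$ into the missing $X_{0i}$ and $Y_{0i}$.

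The main obstacle I anticipate is the combinatorial control of the $\hR$-words: after applying \cref{formulaYYRX} to shuttle the $Y$'s to the right, one is left with a product of the form $(X_{01}\dots X_{0i})\cdot W\cdot(Y_{0i}\dots Y_{01})$ where $W$ is some braid word in the $\hR_j$, and one must show $W=(\hR_{1}\dots\hR_{i-1})^{i}$. This is a purely braid-monoid identity, and it is precisely \cref{formulaRR}, \cref{formulaRRR} (together with \cref{formulaRRRR}) that should close it, but matching up the telescoping carefully — tracking which copy of $\hR_{i-1}\dots\hR_{1}$ produced by \cref{formulaYYRX} cancels against which deficiency in $(\hR_{1}\dots\hR_{i-2})^{i}$ versus $(\hR_{1}\dots\hR_{i-1})^{i}$ — is where the argument needs care. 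A clean way to organize it is: by the inductive hypothesis the first $i-1$ factors already equal $(X_{01}\dots X_{0,i-1})(\hR_{1}\dots\hR_{i-1})^{i-1}(Y_{0,i-1}\dots Y_{01})$; then multiply on the right by $X_{01}Y_{01}\hR_{1}\dots\hR_{i-1}$, use \cref{FRTforhigherordercommutation} to pull $X_{01}$ left through the $Y$'s and $\hR$'s to join $X_{01}\dots X_{0,i-1}$ (after a reindexing shift to $X_{02}\dots X_{0i}$), use \cref{formulaYYRX} to pull the resulting block of $Y$'s right through $(\hR_{1}\dots\hR_{i-1})^{i-1}$, and finally apply \cref{formulaRRR} (with the roles of $i$ there played by $i$ here) to recognize the leftover $\hR$-word as $(\hR_{1}\dots\hR_{i-1})^{i}$.
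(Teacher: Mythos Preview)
Your first sketch is essentially the paper's argument: peel off the last factor, use \cref{formulaRXYR} on the $\hR_{i-1}$ coming from the leading block so that the first $i-1$ factors become $(X_{01}Y_{01}\hR_{1}\dots\hR_{i-2})^{i-1}\hR_{i-1}\dots\hR_{1}$, apply the inductive hypothesis to $(X_{01}Y_{01}\hR_{1}\dots\hR_{i-2})^{i-1}$, then use \cref{formulaYYRX} and \cref{formulaYYR} to sort the remaining $X$'s and $Y$'s, and close with \cref{formulaRRR}. That route is correct and is exactly what the paper does.

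Your ``clean way to organize it'' paragraph, however, has a genuine gap. The inductive hypothesis does \emph{not} say that $(X_{01}Y_{01}\hR_{1}\dots\hR_{i-1})^{i-1}=(X_{01}\dots X_{0,i-1})(\hR_{1}\dots\hR_{i-1})^{i-1}(Y_{0,i-1}\dots Y_{01})$. Replacing $i$ by $i-1$ in \cref{middlepartsigmaqQi} gives a statement about $(X_{01}Y_{01}\hR_{1}\dots\hR_{i-2})^{i-1}$, with the word $\hR_{1}\dots\hR_{i-2}$ and not $\hR_{1}\dots\hR_{i-1}$. That is precisely why \cref{formulaRXYR} must be applied \emph{before} the inductive hypothesis: it strips every $\hR_{i-1}$ out of the first $i-1$ blocks, leaving a product to which the hypothesis genuinely applies, at the cost of an extra tail $\hR_{i-1}\dots\hR_{1}$ that is later absorbed by \cref{formulaRRR}. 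Two further points: \cref{FRTforhigherordercommutation} plays no role here and cannot be used to ``pull $X_{01}$ left through the $Y$'s''; and there is no reindexing shift of the already-assembled $X_{01}\dots X_{0,i-1}$. The missing $X_{0i}$ is produced in one step by \cref{formulaYYRX}, which turns $Y_{0,i-1}\dots Y_{01}\hR_{i-1}\dots\hR_{1}X_{01}$ into $X_{0i}\hR_{i-1}\dots\hR_{1}Y_{0i}\dots Y_{02}$; after that, $X_{0i}$ commutes with $(\hR_{1}\dots\hR_{i-2})^{i-1}$ trivially because the legs are disjoint.
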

\begin{proof} For $i=2$ we have
\[
X_{01}Y_{01}\hR_{1}X_{01}Y_{01}\hR_{1}=X_{01}X_{02}\hR_{1}Y_{02}Y_{01}\hR_{1}=X_{01}X_{02}\hR_{1}^{2}Y_{02}Y_{01}.
\]
By \cref{formulaRXYR}, induction, \cref{formulaYYRX} and \cref{formulaYYR}, the left hand side of \cref{middlepartsigmaqQi} equals
\begin{align*}
 & (X_{01}Y_{01}\hR_{1}\dots\hR_{i-2})(X_{01}Y_{01}\hR_{1}\dots\hR_{i-2})^{i-2}\hR_{i-1}\dots\hR_{1}X_{01}Y_{01}\hR_{1}\dots\hR_{i-1}\\
={} & (X_{01}\dots X_{0,i-1})(\hR_{1}\dots\hR_{i-2})^{i-1}Y_{0,i-1}\dots Y_{01}\hR_{i-1}\dots\hR_{1}X_{01}Y_{01}\hR_{1}\dots\hR_{i-1}\\
={} & (X_{01}\dots X_{0,i-1})(\hR_{1}\dots\hR_{i-2})^{i-1}X_{0i}\hR_{i-1}\dots\hR_{1}Y_{0i}\dots Y_{01}\hR_{1}\dots\hR_{i-1}\\
={} & (X_{01}\dots X_{0i})(\hR_{1}\dots\hR_{i-2})^{i-1}\hR_{i-1}\dots\hR_{1}\hR_{1}\dots\hR_{i-1}Y_{0i}\dots Y_{01},
\end{align*}
which equals the right hand side of \cref{middlepartsigmaqQi} by \cref{formulaRRR}.
\end{proof}
\begin{Prop} For $N\geq 2$ and $0\leq i\leq N$ we have
\begin{equation}
\sigma_{q,Q}(i)=q^{(i-1)i}\alpha_{i}v_{1\dots N}X_{01}\dots X_{0i}Y_{0i}\dots Y_{01}u_{1\dots N}.\label{sigmaqQiseparated}
\end{equation}
\end{Prop}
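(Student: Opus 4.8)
The plan is to start from \cref{sigmaqQiGPS}, insert the rearrangement \cref{middlepartsigmaqQi} that was just established, and then show that the resulting block $(\hR_{1}\dots\hR_{i-1})^{i}$ acts simply as the scalar $q^{2(i-1)i}$ once it is slid onto $u_{1\dots N}$.

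First I would dispose of the boundary cases $i=0$ and $i=1$ directly, since \cref{middlepartsigmaqQi} is stated only for $i\ge 2$. For $i=1$ the product $\hR_{1}\dots\hR_{i-1}$ is empty, so \cref{sigmaqQiGPS} already reads $\sigma_{q,Q}(1)=\alpha_{1}v_{1\dots N}X_{01}Y_{01}u_{1\dots N}$, which is the claimed right-hand side. For $i=0$ the claim is $1=\alpha_{0}v_{1\dots N}u_{1\dots N}$, which holds because $v_{1\dots N}u_{1\dots N}=\|u_{1\dots N}\|^{2}=[N]!$ by \cref{normLC} and $\alpha_{0}=\tfrac{1}{[N]!}$ by \cref{alphai}.

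For $i\ge 2$, I substitute \cref{middlepartsigmaqQi} into \cref{sigmaqQiGPS} to get
\[
\sigma_{q,Q}(i)=q^{-(i-1)i}\alpha_{i}\,v_{1\dots N}(X_{01}\dots X_{0i})(\hR_{1}\dots\hR_{i-1})^{i}(Y_{0i}\dots Y_{01})u_{1\dots N}.
\]
The key step is then to move $(\hR_{1}\dots\hR_{i-1})^{i}$ to the right, past $Y_{0i}\dots Y_{01}$, so that it acts on $u_{1\dots N}$. For each fixed $j$ with $1\le j\le i-1$, the operator $\hR_{j}=\hR_{j,j+1}$ involves only tensor legs $j$ and $j+1$, hence commutes with every $Y_{0k}$ for $k\notin\{j,j+1\}$, while it commutes with the block $Y_{0,j+1}Y_{0,j}$ by the first family of relations in \cref{FRTOqQRMNC} (read in legs $j,j+1$). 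Sliding $\hR_{j}$ through $Y_{0i}\dots Y_{0,j+2}$, then through $Y_{0,j+1}Y_{0,j}$, then through $Y_{0,j-1}\dots Y_{01}$ gives $\hR_{j}(Y_{0i}\dots Y_{01})=(Y_{0i}\dots Y_{01})\hR_{j}$; since $(\hR_{1}\dots\hR_{i-1})^{i}$ is a product of such factors, it commutes with $Y_{0i}\dots Y_{01}$ as a whole.

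Finally I apply \cref{RLC}. The product $(\hR_{1}\dots\hR_{i-1})^{i}$ consists of $i(i-1)$ factors, each of index between $1$ and $i-1\le N-1$, and each such $\hR_{j}$ satisfies $\hR_{j}u_{1\dots N}=-q^{2}u_{1\dots N}$; peeling the factors off one at a time from the inside yields $(\hR_{1}\dots\hR_{i-1})^{i}u_{1\dots N}=(-q^{2})^{i(i-1)}u_{1\dots N}=q^{2(i-1)i}u_{1\dots N}$, the sign vanishing because $i(i-1)$ is even. Combining the three steps,
\[
\sigma_{q,Q}(i)=q^{-(i-1)i}\alpha_{i}\,q^{2(i-1)i}\,v_{1\dots N}X_{01}\dots X_{0i}Y_{0i}\dots Y_{01}u_{1\dots N}=q^{(i-1)i}\alpha_{i}v_{1\dots N}X_{01}\dots X_{0i}Y_{0i}\dots Y_{01}u_{1\dots N},
\]
as claimed. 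There is no genuine obstacle here: the combinatorial heavy lifting was already carried out in \cref{middlepartsigmaqQi}, and the only points demanding care are bookkeeping of which tensor legs each $\hR_{j}$ and each $Y_{0k}$ act on when commuting them, and treating $i=0,1$ separately. (One could equally push $(\hR_{1}\dots\hR_{i-1})^{i}$ leftward onto $v_{1\dots N}$ using $v_{1\dots N}\hR_{j}=-q^{2}v_{1\dots N}$, but the rightward route is slightly cleaner.)
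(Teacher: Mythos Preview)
Your proof is correct and follows the same route as the paper: handle $i=0,1$ separately, plug \cref{middlepartsigmaqQi} into \cref{sigmaqQiGPS}, commute the $\hR$-block through $Y_{0i}\dots Y_{01}$, and evaluate it on $u_{1\dots N}$ via \cref{RLC}. The only cosmetic difference is that the paper packages your commutation argument as the lemma \cref{formulaYYR} (proved by the same leg-by-leg reasoning you give), whereas you rederive it inline.
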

\begin{proof} For $i=0$, \cref{sigmaqQiseparated} gives $\alpha_{0}\|u_{1\dots N}\|^{2}$, which gives the correct value $1$ by \cref{alphai} and for $i=1$, \cref{sigmaqQiseparated} equals \cref{sigmaqQiGPS}. Now let $2\leq i\leq N$. Then \cref{sigmaqQiseparated} follows from \cref{formulaYYR}, \cref{sigmaqQiGPS}, \cref{middlepartsigmaqQi} and \cref{RLC}.
\end{proof}
In order to formulate our main results, we need the \mpq{} minors defined in \cref{mpqminor}. These appeared before in \cite[12]{Sasafetifqmttmc}. Here, $(-q)_{\sigma,r}$ and $(-q)_{\sigma,c}$ are defined in \cref{minusqsigmarandminusqsigmac}.
\begin{Prop}\label{mpqminor} For $J,K\in\binom{[N]}{i}$, we have
\begin{align*}
 & q^{-2l(\sigma)}\sum_{\substack{\tau\in K^{[i]}\\\text{injective}}}{(-q)_{\sigma,r}(-q)_{\tau,c}X_{\sigma(1)\tau(1)}\dots X_{\sigma(i)\tau(i)}} & & \text{for }\sigma\in J^{[i]}\text{ as below}\\*
={} & q^{-2l(\tau)}\sum_{\substack{\sigma\in J^{[i]}\\\text{injective}}}{(-q)_{\sigma,r}(-q)_{\tau,c}X_{\sigma(1)\tau(1)}\dots X_{\sigma(i)\tau(i)}} & & \text{for }\tau\in K^{[i]}\text{ as below},
\end{align*}
where $\sigma$ and $\tau$ are both injective, in which case we call this element the \emph{\mpq{} minor} of $X$ and write it as $[X]_{q,Q,J,K}$, or $\sigma$ and $\tau$ are both not injective, in which case this element is $0$.
\end{Prop}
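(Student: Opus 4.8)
The plan is to exhibit both expressions in the statement as two evaluations of a single, manifestly well‑defined object: a matrix entry of the product $X_{01}\cdots X_{0i}$ acting on the quantum exterior power carried by the matrix legs $1,\dots,i$.

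Concretely, I would work on $(\C^N)^{\otimes i}$ (legs $1,\dots,i$), writing $\hR_k=\hR_{k,k+1}$. By \cref{YB} and \cref{Hecke} the operators $\hR_1,\dots,\hR_{i-1}$ generate a representation of the Hecke algebra; let $A_{[i]}$ be a nonzero operator with $\hR_kA_{[i]}=A_{[i]}\hR_k=-q^2A_{[i]}$ for all $k$ (its antisymmetriser), normalised as in the formula below. With $w_J:=\sum_{\tau}(-q)_{\tau,c}\,e_{\tau(1)}\otimes\cdots\otimes e_{\tau(i)}$, the sum over the injective $\tau\in J^{[i]}$ (the $i$‑leg analogue of the Levi‑Civita tensor of \cref{LC}, with the weights of \cref{minusqsigmarandminusqsigmac}), the crucial input is the explicit action
\[
A_{[i]}(e_{c_1}\otimes\cdots\otimes e_{c_i})=
\begin{cases}
(-q)_{\sigma,r}\,w_J & \text{if }c_1,\dots,c_i\text{ are pairwise distinct,}\\
0 & \text{otherwise,}
\end{cases}
\]
where $J=\{c_1,\dots,c_i\}$ and $\sigma\in J^{[i]}$ denotes $m\mapsto c_m$; consequently the $w_J$, $J\in\binom{[N]}{i}$, form a basis of the image of $A_{[i]}$.

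Since $A_{[i]}$ is a polynomial in the $\hR_k$ and \cref{FRTOqQMNC} gives $\hR_kX_{01}\cdots X_{0i}=X_{01}\cdots X_{0i}\hR_k$ for $1\le k\le i-1$ (the legs $\ne k,k+1$ being disjoint from $\hR_k$, exactly as in the derivation of \cref{FRTforhigherordercommutation}), one gets $A_{[i]}X_{01}\cdots X_{0i}=X_{01}\cdots X_{0i}A_{[i]}$. Hence $X_{01}\cdots X_{0i}$ maps the $\OqQMNC$‑span of the $w_K$ into itself, and there are unique $m_{J,K}\in\OqQMNC$ with $X_{01}\cdots X_{0i}\,w_K=\sum_J m_{J,K}w_J$. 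Now I would read off $m_{J,K}$ in two ways. Pairing this identity with $e_{\sigma(1)}^*\otimes\cdots\otimes e_{\sigma(i)}^*$ for an injective $\sigma\in J^{[i]}$: the left side equals $\sum_{\tau}(-q)_{\tau,c}X_{\sigma(1)\tau(1)}\cdots X_{\sigma(i)\tau(i)}$ (sum over injective $\tau\in K^{[i]}$) and the right side equals $m_{J,K}(-q)_{\sigma,c}$ (since $w_{J'}$ is supported on orderings of $J'$); as $(-q)_{\sigma,c}^{-1}=q^{-2l(\sigma)}(-q)_{\sigma,r}$ by \cref{multiparam}, this is exactly the first expression. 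Alternatively, writing $w_K=(-q)_{\tau,r}^{-1}A_{[i]}(e_{\tau(1)}\otimes\cdots\otimes e_{\tau(i)})$ for an injective $\tau\in K^{[i]}$ and moving $A_{[i]}$ through $X_{01}\cdots X_{0i}$ gives $m_{J,K}=(-q)_{\tau,r}^{-1}\sum_{\sigma}(-q)_{\sigma,r}X_{\sigma(1)\tau(1)}\cdots X_{\sigma(i)\tau(i)}$, which is the second expression, again by \cref{multiparam}. This simultaneously shows the two expressions coincide (both equal $m_{J,K}$) and are independent of the chosen $\sigma$, resp.\ $\tau$. If $\sigma$ (resp.\ $\tau$) is not injective, then $e_{\sigma(1)}^*\otimes\cdots$ is orthogonal to every $w_{J'}$ (resp.\ $A_{[i]}(e_{\tau(1)}\otimes\cdots)=0$, which one moves across $X_{01}\cdots X_{0i}$ using the commutation), so the element is $0$.

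The main obstacle is the displayed formula for $A_{[i]}$. I would prove it by sorting $e_{c_1}\otimes\cdots\otimes e_{c_i}$ one adjacent transposition at a time, each time using $\hR_kA_{[i]}=-q^2A_{[i]}$ together with the standard‑basis action $\hR(e_a\otimes e_b)=q_{ab}\,e_b\otimes e_a+\delta_{b<a}(1-q^2)\,e_a\otimes e_b$ read off from \cref{Rmatrix}. The delicate point is that resolving an inversion $c_m>c_{m+1}$ brings in the Hecke correction $1-q^2$, but after applying $A_{[i]}$ it cancels, since $-q^2-(1-q^2)=-1$; so each resolved inversion contributes a single factor $-q_{c_mc_{m+1}}$, and over a full sorting these multiply to $(-q)_{\sigma,r}$. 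The vanishing on tensors with a repeated leg uses in addition that $\hR$ fixes $e_c\otimes e_c$ (because $q_{cc}=1$). Once this and the commutation $A_{[i]}X_{01}\cdots X_{0i}=X_{01}\cdots X_{0i}A_{[i]}$ are in hand, everything else is formal.
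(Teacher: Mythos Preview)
Your proof is correct. The paper does not actually argue this statement but defers entirely to \cite[Lemma 4.1.1]{PWqlg}; your approach via the Hecke antisymmetriser $A_{[i]}$ on $(\C^N)^{\otimes i}$ and the commutation $A_{[i]}X_{01}\cdots X_{0i}=X_{01}\cdots X_{0i}A_{[i]}$ is the standard conceptual route and is in the same spirit as Parshall--Wang's treatment via the quantum exterior algebra. What you gain over the bare citation is a self-contained argument: both displayed expressions are exhibited as two readings of the same matrix entry $m_{J,K}$ of $X_{01}\cdots X_{0i}$ restricted to the antisymmetric subspace, with \cref{multiparam} converting $(-q)_{\sigma,c}^{-1}$ into $q^{-2l(\sigma)}(-q)_{\sigma,r}$, and the non-injective cases drop out because the $w_{J'}$ are supported on injective sequences (first expression) and because $A_{[i]}$ kills tensors with a repeated factor (second expression). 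The one point left slightly implicit is the existence and normalisation of $A_{[i]}$; this is the Hecke-algebra antisymmetriser, and your sorting computation together with the $i$-leg analogue of \cref{RLC}, namely $\hR_k w_J=-q^2 w_J$, simultaneously fixes its action on the standard basis and the normalisation you need.
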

\begin{proof} This can be proven analogously as \cite[Lemma 4.1.1]{PWqlg}.
\end{proof}
Note that $[X]_{q,Q,\emptyset,\emptyset}=1$. By choosing $\sigma(n)=j_{n}$ respectively $\tau(n)=k_{n}$, $n\in[i]$, we obtain in particular
\[
[X]_{q,Q,J,K}=\sum_{\tau\in S_{i}}{(-q)_{\tau,c}X_{j_{1}k_{\tau(1)}}\dots X_{j_{i}k_{\tau(i)}}}=\sum_{\sigma\in S_{i}}{(-q)_{\sigma,r}X_{j_{\sigma(1)}k_{1}}\dots X_{j_{\sigma(i)}k_{i}}}.
\]
Since $Y=X^{*}$, it is natural to define the \mpq{} minor of $Y$ by $[Y]_{q,Q,J,K}=[X]_{q,Q,K,J}^{*}$. The \emph{\mpq{} determinants} are given by
\begin{align*}
\detqQ(X) & =[X]_{q,Q,[N],[N]}, & \detqQ(Y) & =[Y]_{q,Q,[N],[N]}.
\end{align*}
Note that $DX$ and $XD'$ satisfy the same relations \cref{relationsXX} as $X$. Similarly, $YD$ and $D'Y$ satisfy the same relations \cref{FRTOqQRMNC} as $Y$. Hence it is meaningful to consider the \mpq{} minors of these 4 matrices. We verify that
\begin{equation}
\begin{aligned}
[DX]_{q,Q,J,K} & =q^{\wt(J-1)}[X]_{q,Q,J,K}, & [XD']_{q,Q,J,K} & =q^{\wt(K'-1)}[X]_{q,Q,J,K},\\*
[YD]_{q,Q,J,K} & =q^{\wt(K-1)}[Y]_{q,Q,J,K}, & [D'Y]_{q,Q,J,K} & =q^{\wt(J'-1)}[Y]_{q,Q,J,K}.
\end{aligned}\label{mpqminorofXandYwithDandDprime}
\end{equation}
Here, $\wt(J-1)=\wt(J)-i$ and $\wt(J'-1)=iN-\wt(J)$. For $J\in\binom{[N]}{i}$ and $\sigma\in J^{[i]}$ we verify that
\begin{align*}
((-q)_{\sigma,r})' & =(-q)_{\sigma',c}, & ((-q)_{\sigma,c})' & =(-q)_{\sigma',r},
\end{align*}
where $\sigma':[i]\to J':n\mapsto(\sigma(i-n+1))'$.
\begin{Lem} For $J,K\in\binom{[N]}{i}$, we have
\begin{align}
[X]_{q,Q,J,K}' & =[X]_{q,Q,K',J'}, & [Y]_{q,Q,J,K}' & =[Y]_{q,Q,K',J'},\nonumber\\*
[DX]_{q,Q,J,K}' & =[XD']_{q,Q,K',J'}, & [YD]_{q,Q,J,K}' & =[D'Y]_{q,Q,K',J'}.\label{primeappliedtompqminor}
\end{align}
Note the analogy with \cref{definitionprime}. In particular (see also \cite[Lemma 4.2.3]{PWqlg}) $(\detqQ(X))'=\detqQ(X)$.
\end{Lem}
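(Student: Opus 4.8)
The plan is to reduce the whole statement to the single identity $([X]_{q,Q,J,K})'=[X]_{q,Q,K',J'}$, which I would prove by applying $'$ directly to the explicit expansion of the \mpq{} minor recorded after \cref{mpqminor}; the other three identities and the claim $(\detqQ(X))'=\detqQ(X)$ then follow formally. Concretely, starting from $[X]_{q,Q,J,K}=\sum_{\sigma\in S_{i}}(-q)_{\sigma,r}X_{j_{\sigma(1)}k_{1}}\dots X_{j_{\sigma(i)}k_{i}}$ and using that $'$ is a $^{*}$-anti-isomorphism with $q_{ij}'=q_{i'j'}$ and $X_{ij}'=X_{j'i'}$, applying $'$ to the $\sigma$-summand reverses the order of the factors and turns $X_{j_{\sigma(n)}k_{n}}$ into $X_{(k_{n})'(j_{\sigma(n)})'}$, while the scalar becomes $((-q)_{\sigma,r})'=(-q)_{\sigma',c}$ with $\sigma'\colon[i]\to J'$, $\sigma'(n)=(\sigma(i-n+1))'$, exactly as verified just before the Lemma.

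The key step is then the reindexing of this reversed product. Since $k_{1}<\dots<k_{i}$, the $m$-th smallest element of $K'$ is $(k_{i-m+1})'$, and likewise for $J'$; substituting $m=i-n+1$ rewrites the reversed product as $X_{(k_{i})'\,\sigma'(1)}X_{(k_{i-1})'\,\sigma'(2)}\dots X_{(k_{1})'\,\sigma'(i)}$, whose row indices $(k_{i})'<(k_{i-1})'<\dots<(k_{1})'$ exhaust $K'$ in increasing order and whose column indices $\sigma'(1),\dots,\sigma'(i)$ run over $J'$. As $\sigma$ ranges over $S_{i}$ the map $\sigma'$ ranges over all bijections $[i]\to J'$, so summing against $(-q)_{\sigma',c}$ produces precisely the column-expansion of $[X]_{q,Q,K',J'}$ from \cref{mpqminor}; this gives $([X]_{q,Q,J,K})'=[X]_{q,Q,K',J'}$.

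The remaining identities are now formal. Since $'$ commutes with $^{*}$, applying it to $[Y]_{q,Q,J,K}=[X]_{q,Q,K,J}^{*}$ and using the first identity with $J$ and $K$ interchanged gives $([Y]_{q,Q,J,K})'=([X]_{q,Q,J',K'})^{*}=[Y]_{q,Q,K',J'}$. For the two remaining identities I would combine \cref{mpqminorofXandYwithDandDprime} with $q'=q$, the first identity, and $(J')'=J$: from $[DX]_{q,Q,J,K}=q^{\wt(J-1)}[X]_{q,Q,J,K}$ and $[XD']_{q,Q,K',J'}=q^{\wt((J')'-1)}[X]_{q,Q,K',J'}=q^{\wt(J-1)}[X]_{q,Q,K',J'}$ one reads off $([DX]_{q,Q,J,K})'=[XD']_{q,Q,K',J'}$, and the computation for $([YD]_{q,Q,J,K})'=[D'Y]_{q,Q,K',J'}$ is symmetric (now using $[YD]_{q,Q,J,K}=q^{\wt(K-1)}[Y]_{q,Q,J,K}$ and $(K')'=K$). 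Specializing to $J=K=[N]$, where $[N]'=[N]$, yields $(\detqQ(X))'=\detqQ(X)$.

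I expect the reindexing in the second paragraph to be the only delicate point: one must verify that reversing the product and applying $'$ to the indices produces the row index in its natural increasing order on $K'$ and the column index as exactly the permutation $\sigma'$ that appears in $((-q)_{\sigma,r})'=(-q)_{\sigma',c}$, so that the result is the \emph{column}-expansion of $[X]_{q,Q,K',J'}$ (and not its row-expansion). The already established formula $((-q)_{\sigma,r})'=(-q)_{\sigma',c}$ is precisely what makes the $q$-scalars match; everything else is bookkeeping.
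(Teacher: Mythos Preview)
Your proposal is correct and is essentially the approach the paper has in mind: the paper's own proof is a one-line reference (``This can be proven analogously as \cite[Lemma 4.3.1]{PWqlg}''), and what you spell out---apply $'$ to the explicit expansion after \cref{mpqminor}, use $((-q)_{\sigma,r})'=(-q)_{\sigma',c}$ together with the reversal of factors and the substitution $X_{ab}\mapsto X_{b'a'}$, then recognize the result as the expansion of $[X]_{q,Q,K',J'}$---is exactly that direct computation. Your reductions of the $Y$-, $DX$-, and $YD$-identities to the first identity via $^{*}$ and \cref{mpqminorofXandYwithDandDprime} (using $(J')'=J$, $(K')'=K$) are clean and correct, and the specialization $J=K=[N]$ indeed gives $(\detqQ(X))'=\detqQ(X)$.
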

\begin{proof} This can be proven analogously as \cite[Lemma 4.3.1]{PWqlg}.
\end{proof}
Now we prove \cref{theoremsigmaqQiCB}.
\begin{proof}[Proof of \cref{theoremsigmaqQiCB}] In \cref{sigmaqQiseparated}, we write $u_{1\dots N}$ out using \cref{LC} and we denote by $\sigma$ respectively $\tau$ the permutation in $v_{1\dots N}$ respectively $u_{1\dots N}$. Then the corresponding term in \cref{sigmaqQiseparated} is $0$ unless
\begin{equation}
\sigma(i+1)=\tau(i+1),\dots,\sigma(N)=\tau(N).\label{sigmaequalstauoniplusonetillN}
\end{equation}
Hence, we can write \cref{sigmaqQiseparated} as
\begin{align*}
\sigma_{q,Q}(i)={} & q^{(i-1)i}\alpha_{i}\sum_{\mathclap{\substack{\sigma,\tau\in S_{N},\cref{sigmaequalstauoniplusonetillN},\\\nu\in[N]^{[i]}}}}{(-q)_{\sigma,r}(-q)_{\tau,c}X_{\sigma(1),\nu(1)}\dots X_{\sigma(i),\nu(i)}Y_{\nu(i),\tau(i)}\dots Y_{\nu(1),\tau(1)}}\\
={} & q^{(i-1)i}\alpha_{i}\sum_{J\in\binom{[N]}{i}}{\sum_{\substack{\tilde{\sigma},\tilde{\tau}\in J^{[i]}\\\text{injective}}}{\sum_{\substack{\xi\in([N]\setminus J)^{[N]\setminus[i]}\\\text{injective}}}{\sum_{\nu\in[N]^{[i]}}}}}\\*
 & (-q)_{\sigma,r}(-q)_{\tau,c}X_{\tilde{\sigma}(1),\nu(1)}\dots X_{\tilde{\sigma}(i),\nu(i)}Y_{\nu(i),\tilde{\tau}(i)}\dots Y_{\nu(1),\tilde{\tau}(1)},
\end{align*}
where $\sigma|_{[i]}=\tilde{\sigma}$, $\tau|_{[i]}=\tilde{\tau}$ and $\sigma|_{[N]\setminus[i]}=\tau|_{[N]\setminus[i]}=\xi$. We have that
\begin{align*}
(-q)_{\sigma,r} & =(-q)_{\tilde{\sigma},r}(-q)_{\xi,r}\prod_{\mathclap{\substack{m\in J,n\in[N]\setminus J,\\m>n}}}{(-q_{mn})}, & (-q)_{\tau,c} & =(-q)_{\tilde{\tau},c}(-q)_{\xi,c}\prod_{\mathclap{\substack{m\in J,n\in[N]\setminus J,\\m>n}}}{(-q_{nm})}.
\end{align*}
Here, $\prod_{m\in J,n\in[N]\setminus J,m>n}{(-q_{mn})(-q_{nm})}=q^{2\left(\wt(J)-\frac{i(i+1)}{2}\right)}$. The $(-q)_{\tilde{\sigma},r}$ and $(-q)_{\tilde{\tau},c}$ can be used to apply \cref{mpqminor}. Then we only need the $\nu$ which are injective. Then $\sum_{\nu\in[N]^{[i]}}$ is the same as $\sum_{K\in\binom{[N]}{i}}\sum_{\nu\in K^{[i]}\text{ injective}}$ and we obtain $[X]_{q,Q,J,K}$ and $[Y]_{q,Q,K,J}$. The remaining factors are $\sum_{\xi\in S_{N-i}}{q^{2l(\xi)}}=[N-i]!$ and $\sum_{\nu\in S_{i}}{q^{2l(\nu)}}=[i]!$, where we used \cref{normLC}. These cancel with $\alpha_{i}$. Using \cref{invariance} and \cref{primeappliedtompqminor}, we obtain the second formula.
\end{proof}
In particular we have
\begin{equation}
\sigma_{q,Q}(N)=q^{(N-1)N}\detqQ(X)\detqQ(Y)=q^{(N-1)N}\detqQ(Y)\detqQ(X).\label{sigmaqQN}
\end{equation}
For $T$, \cref{sigmaqQN} reduces to \cref{sigmaqTN}. Note that by using \cref{mpqminorofXandYwithDandDprime}, we have in \cref{theoremsigmaqQiCB} the factors $q^{2(\wt(J)-i)}$ and $q^{2(iN-\wt(J))}$ for respectively $X$ followed by $Y$ or vice versa. These factors also occur in \cref{sigmaqiJW} and \cref{sigmaqiJWafterinvariance}, where $A=XY$ respectively $B=YX$ and again $X$ is followed by $Y$ or vice versa.
\begin{Rem} The coefficient $\sigma_{q,Q}(N-1)$ can also be found as follows. We multiply \cref{CH} to the left with $D^{2}(XY)^{-1}$, take the trace and use \cref{Newton} to obtain
\[
\sigma_{q,Q}(N-1)=\Tr(((D')^{2}XY)^{-1})\sigma_{q,Q}(N).
\]
Now, $(XY)^{-1}$ can e.g.~be found using \cref{theoremtINplusXYinverse} for $t=0$. This results in the same formula for $\sigma_{q,Q}(N-1)$ as in \cref{theoremsigmaqQiCB}.
\end{Rem}
\begin{Rem} In \cite{MerqdBCf}, a \CB{} formula is proved for the product $UV$, where the entries of $U$ and $V$ (not necessarily square) both satisfy the relations \cref{relationsXX} in $\OqMNC$ and with $[U_{ij},V_{kl}]=0$. In the setting of \cref{theoremsigmaqQiCB}, this last relation does not hold since $X_{ij}$ does in general not commute with $Y_{kl}$.
\end{Rem}
\begin{Rem} In general, the $s_{q}(i)$ in e.g.~the ordering used in the second column of \cref{tableTtwothree} \emph{do not} satisfy \cref{observation}, e.g.~for $T$ and $N=2$, $s_{q}(2)$ then contains the term $-q^{3}(q-q^{-1})T_{11}^{2}T_{22}^{2}$.
\end{Rem}
\section{\CB{} for \texorpdfstring{$(tI_{N}+XY)^{-1}$}{the inverse of tIN+XY}}
Our original motivation is the observation that \cite[(4.14)]{WorCstaragbue} satisfies \cref{observation}. More precisely, we have
\begin{equation}
(I_{2}+T^{*}T)^{-1}=\frac{1}{\mathcal{C}_{2}}\begin{pmatrix*}[c]1+q^{2}T_{22}^{2}+T_{12}T_{12}^{*} & -T_{11}T_{12}\\-T_{12}^{*}T_{11} & 1+q^{2}T_{11}^{2}\end{pmatrix*}\label{ItwoplusTstarTinverse}
\end{equation}
where $\mathcal{C}_{2}=1+q^{2}T_{11}^{2}+T_{22}^{2}+T_{12}^{*}T_{12}+q^{2}T_{11}^{2}T_{22}^{2}$ is a self-adjoint central element. In this Section, we will prove \cref{theoremtINplusXYinverse}, where such a formula for all $N\geq 2$ in the \mpq{} case with $X$ and $Y$ is given. We will need the \emph{\mpq{} Laplace expansions} in \cref{Laplexp}. By applying $^{*}$, we obtain \mpq{} Laplace expansions for minors of $Y$.
\begin{Prop}\label{Laplexp} For $J,L\in\binom{[N]}{i}$ and $j,l\in[N]$ we have
\begin{align}
\delta_{jl}[X]_{q,Q,J,L} & =(-q)_{J<l}^{-1}\sum_{k\in L}{(-q)_{L<k}X_{jk}[X]_{q,Q,J\setminus\{l\},L\setminus\{k\}}} & & \text{if }j,l\in J\label{LaplexprowXminorX}\\
 & =(-q)_{l>L}^{-1}\sum_{k\in J}{(-q)_{k>J}X_{kj}[X]_{q,Q,J\setminus\{k\},L\setminus\{l\}}} & & \text{if }j,l\in L\nonumber\\
 & =(-q)_{j<J}^{-1}\sum_{k\in L}{(-q)_{k<L}[X]_{q,Q,J\setminus\{j\},L\setminus\{k\}}X_{lk}} & & \text{if }j,l\in J\label{LaplexpminorXrowX}\\
 & =(-q)_{L>j}^{-1}\sum_{k\in J}{(-q)_{J>k}[X]_{q,Q,J\setminus\{k\},L\setminus\{j\}}X_{kl}} & & \text{if }j,l\in L.\label{LaplexpminorXcolumnX}
\end{align}
\end{Prop}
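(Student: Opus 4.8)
The plan is to prove the four identities in parallel by reducing everything to the standard single-parameter quantum Laplace expansion, exactly as in \cite[Lemma 4.4.1, Theorem 4.4.3]{PWqlg}, and then tracking the extra multiparameter coefficients $(-q)_{\sigma,r}$, $(-q)_{\sigma,c}$. Since the four formulas are related by the $^{*}$-operation and by the $'$-operation (using \cref{primeappliedtompqminor}), it suffices to prove one of them, say \cref{LaplexprowXminorX}, and then deduce the others formally: applying $^{*}$ interchanges rows and columns and sends $X$ to $Y$, while $'$ interchanges rows and columns and reverses orderings, and between these two operations one passes from the row-expansion-on-the-left to the column-expansion-on-the-right and from expanding along $J$ to expanding along $L$. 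So the real content is a single multiparameter Laplace expansion, and the rest is bookkeeping that I would spell out in a sentence or two.

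For \cref{LaplexprowXminorX}, first I would expand $[X]_{q,Q,J,L}$ using the definition from \cref{mpqminor}, namely $[X]_{q,Q,J,L}=\sum_{\tau\in S_i}(-q)_{\tau,c}X_{j_1 l_{\tau(1)}}\dots X_{j_i l_{\tau(i)}}$, and group the terms according to the value $k=l_{\tau(m)}$ taken in the slot where the fixed row index $j=j_m$ (here $j=l$, so I take $m$ with $j_m=l$) appears; moving $X_{jk}$ to the front using the $q$-commutation relations \cref{relationsXX} for rows, which introduces exactly the power of $q$ recorded by $(-q)_{J<l}$ and $(-q)_{L<k}$, produces the claimed $\sum_{k\in L}(-q)_{L<k}X_{jk}[X]_{q,Q,J\setminus\{l\},L\setminus\{k\}}$ up to the overall factor $(-q)_{J<l}^{-1}$. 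The case $j=l$ (i.e. $\delta_{jl}=1$) is the genuine cofactor expansion; the case $j\neq l$ with $j,l\in J$ gives $0$ because the right-hand side is then the expansion of the minor of $X$ with the row $l$ replaced by row $j$, hence a minor with a repeated row, which vanishes by the alternating property of \cref{mpqminor} (the element is $0$ when the row-permutation fails to be injective). This last vanishing statement is the one place where I would invoke \cref{mpqminor} directly rather than \cite{PWqlg}.

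The main obstacle is getting the multiparameter prefactors $(-q)_{J<l}^{\pm1}$, $(-q)_{L<k}$, $(-q)_{l>L}^{-1}$, $(-q)_{j<J}^{-1}$, $(-q)_{k<L}$ etc.\ exactly right, including the asymmetry between the ``row'' versions (where one normalizes by a product over $J$ or $L$ on the \emph{small} side) and the ``column'' versions, and checking that the sign/power conventions are consistent with those already fixed in \cref{minusqsigmarandminusqsigmac} and \cref{mpqminor}. Because in the single-parameter case all these reduce to powers of $q^2$ with signs $(-1)^{\text{something}}$, the cleanest route is to first verify the identity after specializing $q_{ij}=q$ (where it is \cite[Lemma 4.4.1]{PWqlg} with the conventions of \cref{remarkconventions}), and then argue that each monomial $X_{\sigma(1)\tau(1)}\dots$ on both sides carries the same multiparameter coefficient by a direct inspection of which pairs $(a,b)$ with $a>b$ occur as (row,row) or (column,column) inversions — an argument I would present as a short lemma comparing $(-q)_{\sigma,r}$ for $\sigma$ and for its restriction $\sigma|_{[i]\setminus\{m\}}$. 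I expect the write-up to say ``this can be proven analogously as \cite[Lemma 4.4.1]{PWqlg}'' after indicating these two points, mirroring the style already used for \cref{mpqminor} and \cref{primeappliedtompqminor}.
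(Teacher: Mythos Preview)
Your proposal is correct and matches the paper's approach: the paper's proof is the single sentence ``This can be proven analogously as \cite[Corollary 4.4.4]{PWqlg}'', and your plan is precisely to carry out that analogy, tracking the multiparameter prefactors via \cref{mpqminor} and deducing the remaining three identities from one by the $^{*}$- and $'$-operations. The only cosmetic difference is that you point to \cite[Lemma 4.4.1, Theorem 4.4.3]{PWqlg} rather than the corollary that packages them.
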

\begin{proof} This can be proven analogously as \cite[Corollary 4.4.4]{PWqlg}.
\end{proof}
We will need to be able to rewrite $X_{ij}Y_{kl}$ as an expression where all $Y_{ij}$ are placed before $X_{ij}$. For now, \cref{relationsXY} is not sufficient because of the sum $\sum_{m=1}^{j-1}{X_{im}Y_{ml}}$.
\begin{Lem}\label{relationsumofXYtemporary} For $1\leq\alpha\leq j-1$ we have
\begin{align*}
\sum_{m=1}^{j-1}{X_{im}Y_{ml}}={} & \sum_{m=j-\alpha}^{j-1}{q^{2(j-m-1)}\left(q_{il}Y_{ml}X_{im}+\delta_{il}(1-q^{2})\sum_{k=i+1}^{N}{Y_{mk}X_{km}}\right)}\\*
 & +q^{2\alpha}\sum_{m=1}^{j-1-\alpha}{X_{im}Y_{ml}}.
\end{align*}
\end{Lem}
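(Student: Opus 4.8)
The plan is to prove this by induction on $\alpha$, with the recursion driving the induction coming from \cref{relationsXY} specialized to $k=j$. Since then $\delta_{jk}=1$ and $q_{jj}=1$, \cref{relationsXY} reads
\[
X_{ij}Y_{jl}+(1-q^{2})\sum_{m=1}^{j-1}{X_{im}Y_{ml}}=q_{il}Y_{jl}X_{ij}+\delta_{il}(1-q^{2})\sum_{k=i+1}^{N}{Y_{jk}X_{kj}}.
\]
Fixing $i,l$ and abbreviating $T_{p}=\sum_{m=1}^{p-1}{X_{im}Y_{ml}}$ (so $T_{1}=0$ and $X_{ip}Y_{pl}=T_{p+1}-T_{p}$), this identity rearranges, for every $p\in[N]$, into
\[
T_{p+1}=q^{2}T_{p}+q_{il}Y_{pl}X_{ip}+\delta_{il}(1-q^{2})\sum_{k=i+1}^{N}{Y_{pk}X_{kp}}.
\]
Writing $c_{m}=q_{il}Y_{ml}X_{im}+\delta_{il}(1-q^{2})\sum_{k=i+1}^{N}{Y_{mk}X_{km}}$ for the bracketed expression occurring in the statement, the recursion becomes simply $T_{p+1}=q^{2}T_{p}+c_{p}$.

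The base case $\alpha=1$ is exactly this recursion at $p=j-1$, namely $T_{j}=q^{2}T_{j-1}+c_{j-1}$: this is the asserted formula, since the single index $m=j-1$ of the first sum carries the coefficient $q^{2(j-m-1)}=q^{0}=1$, and $q^{2}T_{j-1}=q^{2}\sum_{m=1}^{j-2}{X_{im}Y_{ml}}$. For the inductive step, suppose the formula holds for some $\alpha$ with $1\leq\alpha\leq j-2$, i.e.
\[
\sum_{m=1}^{j-1}{X_{im}Y_{ml}}=\sum_{m=j-\alpha}^{j-1}{q^{2(j-m-1)}c_{m}}+q^{2\alpha}T_{j-\alpha}.
\]
Applying the recursion once more at $p=j-\alpha-1$ (legitimate since $j-\alpha-1\geq1$) gives $T_{j-\alpha}=q^{2}T_{j-\alpha-1}+c_{j-\alpha-1}$, hence $q^{2\alpha}T_{j-\alpha}=q^{2(\alpha+1)}T_{j-\alpha-1}+q^{2\alpha}c_{j-\alpha-1}$. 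Because $q^{2\alpha}=q^{2(j-m-1)}$ for $m=j-\alpha-1=j-(\alpha+1)$, the term $q^{2\alpha}c_{j-\alpha-1}$ is precisely the $m=j-(\alpha+1)$ summand, so the first sum now runs from $m=j-(\alpha+1)$ to $m=j-1$, while the leftover is $q^{2(\alpha+1)}T_{j-(\alpha+1)}=q^{2(\alpha+1)}\sum_{m=1}^{j-1-(\alpha+1)}{X_{im}Y_{ml}}$. This is the statement for $\alpha+1$, completing the induction.

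The argument is essentially bookkeeping; the only point that needs attention --- and hence the ``main obstacle'', modest as it is --- is verifying that the coefficient $q^{2(j-m-1)}$ generated at each step of peeling a term off $q^{2\alpha}T_{j-\alpha}$ matches the one in the claimed formula, which reduces to the elementary identity $q^{2\alpha}=q^{2(j-(j-\alpha-1)-1)}$. Everything else is immediate from \cref{relationsXY}, which conveniently already isolates the sum $\sum_{m=1}^{j-1}{X_{im}Y_{ml}}$ for us.
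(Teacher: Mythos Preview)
Your proof is correct and follows exactly the approach indicated in the paper, namely induction on $\alpha$; you have simply unpacked the one-line ``by induction'' by making explicit the recursion $T_{p+1}=q^{2}T_{p}+c_{p}$ obtained from \cref{relationsXY} with $k=j$.
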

\begin{proof} This follows by induction on $\alpha$.
\end{proof}
\begin{Lem}\label{relationsumofXY} We have
\[
\sum_{m=1}^{j-1}{X_{im}Y_{ml}}=\sum_{m=1}^{j-1}{q^{2(j-m-1)}\left(q_{il}Y_{ml}X_{im}+\delta_{il}(1-q^{2})\sum_{\mathclap{k=i+1}}^{N}{Y_{mk}X_{km}}\right)}.
\]
\end{Lem}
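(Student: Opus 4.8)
The plan is to derive \cref{relationsumofXY} from \cref{relationsumofXYtemporary} by choosing the largest admissible value of the parameter $\alpha$, namely $\alpha = j-1$. With this choice the trailing sum $q^{2\alpha}\sum_{m=1}^{j-1-\alpha}{X_{im}Y_{ml}}$ in \cref{relationsumofXYtemporary} becomes an empty sum (the upper index $j-1-\alpha = 0$ is strictly below the lower index $1$), hence vanishes, and the remaining sum $\sum_{m=j-\alpha}^{j-1}$ runs over $m = 1, \dots, j-1$, which is exactly the range in the desired identity.

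More precisely, I would first note that \cref{relationsumofXYtemporary} is stated for $1 \leq \alpha \leq j-1$, so the substitution $\alpha = j-1$ is legitimate whenever $j \geq 2$; for $j = 1$ both sides of \cref{relationsumofXY} are empty sums and the statement is trivially true, so that degenerate case can be dispatched in one sentence. For $j \geq 2$, substituting $\alpha = j-1$ into \cref{relationsumofXYtemporary} gives directly
\[
\sum_{m=1}^{j-1}{X_{im}Y_{ml}} = \sum_{m=1}^{j-1}{q^{2(j-m-1)}\left(q_{il}Y_{ml}X_{im}+\delta_{il}(1-q^{2})\sum_{k=i+1}^{N}{Y_{mk}X_{km}}\right)} + q^{2(j-1)}\sum_{m=1}^{0}{X_{im}Y_{ml}},
\]
and the last term is zero, which is precisely the claim.

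This proof is essentially a one-line specialization, so there is no real obstacle; the only thing to be careful about is the bookkeeping of the summation ranges — that $j - \alpha = 1$ and $j - 1 - \alpha = 0$ when $\alpha = j-1$, so that the first sum acquires the full range $1,\dots,j-1$ and the second sum is genuinely empty — together with handling the $j=1$ edge case separately. An alternative, self-contained route would be to prove \cref{relationsumofXY} directly by induction on $j$ using \cref{relationsXY}, but since \cref{relationsumofXYtemporary} is already available this would be redundant; the specialization argument is cleaner.
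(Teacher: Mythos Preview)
Your proposal is correct and is exactly the paper's own proof: the paper simply states that \cref{relationsumofXY} follows from \cref{relationsumofXYtemporary} for $\alpha=j-1$. Your added remarks on the empty-sum bookkeeping and the $j=1$ edge case only make this one-line specialization explicit.
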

\begin{proof} This follows from \cref{relationsumofXYtemporary} for $\alpha=j-1$.
\end{proof}
Using \cref{relationsumofXY}, we can rewrite \cref{relationsXY} and then we are able to rewrite $X_{ij}Y_{kl}$ as an expression where all $Y_{ij}$ are placed before $X_{ij}$. Similarly, we will need to be able to rewrite $[X]_{q,Q,J,K}Y_{\beta\alpha}$ as an expression where all $Y_{ij}$ are placed before minors of $X$. First, we prove a formula between minors of $X$ and $Y_{ij}$ in \cref{relationsminorXtimesY}. Note that this is a generalization of \cref{relationsXY}.
\begin{Prop}\label{relationsminorXtimesY} For $J,K\in\binom{[N]}{i}$ we have
\begin{align*}
 & q_{K\beta}[X]_{q,Q,J,K}Y_{\beta\alpha}\\*
 & +\delta_{\beta\in K}(1-q^{2})q_{K\beta}(-q)_{K>\beta}^{-1}\sum_{\substack{m=1\\m\notin K}}^{\beta-1}{(-q)_{(K\setminus\{\beta\})>m}[X]_{q,Q,J,(K\cup\{m\})\setminus\{\beta\}}Y_{m\alpha}}\\
={} & q_{J\alpha}Y_{\beta\alpha}[X]_{q,Q,J,K}\\*
 & +\delta_{\alpha\in J}(1-q^{2})q_{J\alpha}(-q)_{J<\alpha}^{-1}\sum_{\substack{l=\alpha+1\\l\notin J}}^{N}{(-q)_{(J\setminus\{\alpha\})<l}Y_{\beta l}[X]_{q,Q,(J\cup\{l\})\setminus\{\alpha\},K}}.
\end{align*}
\end{Prop}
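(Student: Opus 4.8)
\textbf{Proof proposal for \cref{relationsminorXtimesY}.}

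The plan is to reduce the statement about $i \times i$ minors to the already-established commutation relation \cref{relationsXY} between individual entries $X_{ij}$ and $Y_{kl}$, using the \mpq{} Laplace expansion \cref{LaplexprowXminorX} to break the minor into a single row and a smaller minor, and then to proceed by induction on $i$. For $i = 1$ the minor $[X]_{q,Q,\{j\},\{k\}}$ is just $X_{jk}$, and after combining the single-entry relation \cref{relationsXY} with the rewriting in \cref{relationsumofXY} (which moves the troublesome sum $\sum_{m} X_{im}Y_{ml}$ to the right side of the $Y$'s), one recovers exactly the claimed identity with the $\delta_{\beta\in K}$ and $\delta_{\alpha\in J}$ correction terms appearing as the residual $\delta$-sums; this base case is a direct but careful bookkeeping of the powers $q^{2(j-m-1)}$ against the factors $(-q)_{K>\beta}^{-1}(-q)_{(K\setminus\{\beta\})>m}$ and $q_{K\beta}$.

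For the inductive step, I would expand $[X]_{q,Q,J,K}$ along, say, its first row using \cref{LaplexprowXminorX}, writing $[X]_{q,Q,J,K} = (-q)_{J<j_1}^{-1}\sum_{k\in K}(-q)_{K<k} X_{j_1 k}[X]_{q,Q,J\setminus\{j_1\},K\setminus\{k\}}$. Then I push $Y_{\beta\alpha}$ through $X_{j_1 k}$ using the $i=1$ case (equivalently \cref{relationsXY} rewritten via \cref{relationsumofXY}), and push it through the smaller minor $[X]_{q,Q,J\setminus\{j_1\},K\setminus\{k\}}$ using the induction hypothesis. Each of these produces a ``main'' term (with $Y$ fully to the left and the appropriate $q_{K\beta}$, $q_{J\alpha}$ scalars) plus correction terms indexed by new row/column elements. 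The main terms reassemble, via \cref{LaplexprowXminorX} run backwards, into $q_{J\alpha} Y_{\beta\alpha}[X]_{q,Q,J,K}$ up to the scalar matching $q_{J\alpha} = q_{j_1\alpha}\cdot q_{(J\setminus\{j_1\})\alpha}$ and $q_{K\beta} = q_{k\beta}\cdot q_{(K\setminus\{k\})\beta}$; and the correction terms must collapse — after using the Laplace expansion once more and the cancellation of all $q_{j_1 k}$-type products — into the two $\delta$-sums on the nose.

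The main obstacle, and the step requiring genuine care rather than routine manipulation, is the \emph{recombination of the correction terms}. When $\alpha \in J$ (resp.\ $\beta \in K$), the induction hypothesis contributes corrections indexed by $l \notin J\setminus\{j_1\}$ (resp.\ $m \notin K\setminus\{k\}$), and one must show that after summing over the Laplace index $k$ these collapse to a clean sum over $l \notin J$ (resp.\ $m \notin K$) with the correct prefactor $(-q)_{(J\setminus\{\alpha\})<l}$ (resp.\ $(-q)_{(K\setminus\{\beta\})>m}$); the cases $l = j_1$ and $l \ne j_1$ (resp.\ $m = k$ versus $m \ne k$) behave differently and one needs the Laplace expansion identity together with the sign-weight identities relating $(-q)_{J<l}$ for $J$ and $J\setminus\{j_1\}$, plus the fact that a minor with a repeated row or column index vanishes, to see the spurious pieces cancel. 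I would organize this by treating the four combinations of ($\alpha \in J$ or not) and ($\beta \in K$ or not) separately, noting that by the $^{*}$-symmetry $Y = X^{*}$ and by the left–right symmetry encoded in \cref{LaplexpminorXrowX}–\cref{LaplexpminorXcolumnX} only one of them needs a full argument, the others following formally. A sanity check at $J = K = [N]$ reduces the statement to a commutation relation for $\detqQ(X)$ with $Y_{\beta\alpha}$, which should match the expected quasi-commutation (all $\delta$-sums empty), providing a useful consistency test for the signs and powers.
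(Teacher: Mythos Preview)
Your overall strategy---Laplace expansion plus induction on $i$---is what the paper uses, but the organization and several details differ in ways that matter.

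First, the base case $i=1$ is literally \cref{relationsXY}; no appeal to \cref{relationsumofXY} is needed, since the proposition leaves the left-hand correction sum in the form $[X]_{q,Q,\ldots}Y_{m\alpha}$ (that is, $X$ before $Y$), which for $i=1$ is exactly the sum $\sum_m X_{jm}Y_{m\alpha}$ already appearing in \cref{relationsXY}.

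Second, the paper does not run induction through all four cases. The three ``mixed'' cases ($\alpha\notin J,\beta\notin K$; $\alpha\in J,\beta\notin K$; $\alpha\notin J,\beta\in K$) are handled directly: the first by applying \cref{relationsXY} termwise to the defining expansion of the minor (no correction terms arise since neither $\delta$ in \cref{relationsXY} is triggered), and the other two by a single Laplace expansion along the row $\alpha$ (using \cref{LaplexprowXminorX}) respectively the column $\beta$ (using \cref{LaplexpminorXcolumnX}) combined with the first case. Only the case $\alpha\in J,\beta\in K$ uses induction on $i$, and here the paper expands along a row indexed by some $\alpha'\in J\setminus\{\alpha\}$, \emph{not} the first row $j_1$. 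This choice is deliberate: it guarantees $\alpha\in J\setminus\{\alpha'\}$, so the inductive hypothesis still produces the $\delta_{\alpha\in J}$-correction, and it avoids the $l=j_1$ bifurcation you anticipate. Expanding along $j_1$ when $j_1=\alpha$ would throw you into a different case of the hypothesis and make the bookkeeping worse.

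Third, your claimed $^{*}$-symmetry does not reduce the four cases to one: applying $^{*}$ converts the statement into a relation between minors of $Y$ and entries $X_{\alpha\beta}$, which is the dual proposition, not another case of this one. The paper handles the $\alpha\notin J,\beta\in K$ case ``analogously'' to the $\alpha\in J,\beta\notin K$ case, but that means a parallel argument with the column Laplace expansion, not a formal symmetry.

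Finally, the step you correctly flag as the main obstacle---showing the residual terms cancel---is in the paper an explicit computation: after the inductive step and regrouping via \cref{Laplexp}, the leftover is a common factor $[X]_{q,Q,J\setminus\{\alpha'\},K\setminus\{\beta\}}$ times a bracket of three sums; this bracket is shown to vanish by writing the elements of $K\cap\{1,\dots,\beta-1\}$ as $k_1<\cdots<k_t$ and checking a telescoping identity in the exponents $n_m=|\{n\in K\mid m<n<\beta\}|$. Your sketch (``sign-weight identities'' and ``repeated-index minors vanish'') does not capture this mechanism; in particular no repeated-index vanishing is invoked at that stage.
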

\begin{proof} We calculate $q_{K\beta}[X]_{q,Q,J,K}Y_{\beta\alpha}$. If $\alpha\notin J$, $\beta\notin K$ then this follows from \cref{relationsXY}. If $\alpha\in J$, $\beta\notin K$ then we use \cref{LaplexprowXminorX} with $j=l=\alpha$. For $[X]_{q,Q,J\setminus\{\alpha\},K\setminus\{k\}}Y_{\beta\alpha}$ we use the previous case. Then we have $X_{\alpha k}Y_{\beta\alpha}$. Here we use \cref{relationsXY}. Then we use \cref{LaplexprowXminorX} again. The case when $\alpha\notin J$, $\beta\in K$ can be proven analogously by using \cref{LaplexpminorXcolumnX} with $j=l=\beta$. If $\alpha\in J$, $\beta\in K$ then we use induction on $i$. For $i=1$, the result is \cref{relationsXY}. Now suppose $i\geq 2$. Then there exists $\alpha'\in J\setminus\{\alpha\}$ and we use \cref{LaplexpminorXrowX} with $j=l=\alpha'$. Then we have $\sum_{k\in K}$. On $X_{\alpha'k}Y_{\beta\alpha}$ we apply \cref{relationsXY}. For $k\neq\beta$ we apply induction on $[X]_{q,Q,J\setminus\{\alpha'\},K\setminus\{k\}}Y_{\beta\alpha}$. For $k=\beta$ we can use a previous case. Now we can group some terms using \cref{Laplexp} and we obtain all terms in \cref{relationsminorXtimesY}. The remaining terms give
\begin{align*}
 & -(1-q^{2})(-q)_{\alpha'<J}^{-1}[X]_{q,Q,J\setminus\{\alpha'\},K\setminus\{\beta\}}\Bigg(q_{K\beta}(-q)_{\beta<K}\sum_{m=1}^{\beta-1}{X_{\alpha'm}Y_{m\alpha}}\\
 & -\sum_{m=1}^{\beta-1}{q^{2|\{n\in K\setminus\{\beta\}\mid m<n\}|}q_{K\beta}(-q)_{K>\beta}^{-1}X_{\alpha'm}Y_{m\alpha}}\\
 & -(1-q^{2})\sum_{\substack{m=1\\m\in K}}^{\beta-1}{q^{2|\{n\in K\setminus\{\beta\}\mid m<n\}|}q_{K\beta}(-q)_{K>\beta}^{-1}\sum_{n=1}^{m-1}{X_{\alpha'n}Y_{n\alpha}}}\Bigg).
\end{align*}
Let $n_{m}=|\{n\in K\mid m<n<\beta\}|$. Then the 3 terms between brackets give
\[
q_{K\beta}(-q)_{\beta<K}\left(\sum_{m=1}^{\beta-1}{(1-q^{2n_{m}})X_{\alpha'm}Y_{m\alpha}}-(1-q^{2})\sum_{\substack{m=1\\m\in K}}^{\beta-1}{q^{2n_{m}}\sum_{n=1}^{m-1}{X_{\alpha'n}Y_{n\alpha}}}\right).
\]
If $\{n\in K\mid 1\leq n<\beta\}=\emptyset$ then this gives $0$. Otherwise we denote by $k_{1}<\dots<k_{t}$ the elements of this set. Now we write the above sums in terms of these $k_{j}$. In this case we also obtain $0$.
\end{proof}
For now, \cref{relationsminorXtimesY} is not sufficient to have all $Y_{ij}$ placed before minors of $X$ because of the sum $\sum_{m=1,m\notin K}^{\beta-1}$.
\begin{Lem}\label{relationsumofminorXtimesYtemporary} Let $J,K\in\binom{[N]}{i}$ and $\beta\in K$. Let $m_{1}<\dots<m_{\mu}$ be such that $\{m_{1},\dots,m_{\mu}\}=\{1,\dots,\beta-1\}\setminus K$. Then for $1\leq\gamma\leq\mu$ we have
\begin{align*}
 & \sum_{r=1}^{\gamma}{(-q)_{(K\setminus\{\beta\})>m_{r}}[X]_{q,Q,J,(K\cup\{m_{r}\})\setminus\{\beta\}}Y_{m_{r}\alpha}}\\
={} & \sum_{r=1}^{\gamma}{q^{2(\gamma-r)}(-q)_{(K\setminus\{\beta\})>m_{r}}q_{(K\cup\{m_{r}\})\setminus\{\beta\},m_{r}}^{-1}\Bigg(q_{J\alpha}Y_{m_{r}\alpha}[X]_{q,Q,J,(K\cup\{m_{r}\})\setminus\{\beta\}}}\\*
 & +\delta_{\alpha\in J}(1-q^{2})q_{J\alpha}(-q)_{J<\alpha}^{-1}\\*
 & \hphantom{+\delta_{\alpha\in J}(1-q^{2})}\cdot\bigg(\sum_{\substack{l=\alpha+1\\l\notin J}}^{N}{(-q)_{(J\setminus\{\alpha\})<l}Y_{m_{r}l}[X]_{q,Q,(J\cup\{l\})\setminus\{\alpha\},(K\cup\{m_{r}\})\setminus\{\beta\}}}\bigg)\Bigg).
\end{align*}
\end{Lem}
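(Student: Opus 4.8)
The plan is to prove this by induction on $\gamma$, exactly mirroring the proof of \cref{relationsumofXYtemporary} but now with the roles of $X_{im}$ played by the minors $[X]_{q,Q,J,(K\cup\{m\})\setminus\{\beta\}}$ and the identity \cref{relationsXY} replaced by \cref{relationsminorXtimesY}. The base case $\gamma=1$ is precisely the content of \cref{relationsminorXtimesY} applied with $K$ replaced by $(K\cup\{m_1\})\setminus\{\beta\}$ and $\beta$ replaced by $m_1$: indeed $m_1$ is the smallest element of $\{1,\dots,\beta-1\}\setminus K$, so the ``temporary'' sum $\sum_{m=1,\,m\notin(K\cup\{m_1\})\setminus\{\beta\}}^{m_1-1}$ appearing on the left of \cref{relationsminorXtimesY} is empty, and \cref{relationsminorXtimesY} directly gives the $\gamma=1$ instance of the claimed identity, up to multiplying by the overall factor $(-q)_{(K\setminus\{\beta\})>m_1}q_{(K\cup\{m_1\})\setminus\{\beta\},m_1}^{-1}$ and observing that $(-q)_{((K\cup\{m_1\})\setminus\{\beta\})>m_1}=(-q)_{(K\setminus\{\beta\})>m_1}$ since no element of $K$ below $\beta$ other than possibly $m_1$ itself is $>m_1$ — wait, one must check carefully which factors $(-q)_{\cdot>m_1}$ and $q_{\cdot,m_1}$ match up; this bookkeeping is the kind of routine (but error-prone) check one should do explicitly.

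For the inductive step, assume the identity holds for $\gamma-1$. Split the sum on the left as the $r=\gamma$ term plus $\sum_{r=1}^{\gamma-1}$. To the single term $[X]_{q,Q,J,(K\cup\{m_\gamma\})\setminus\{\beta\}}Y_{m_\gamma\alpha}$ apply \cref{relationsminorXtimesY} with $K\rightsquigarrow(K\cup\{m_\gamma\})\setminus\{\beta\}$, $\beta\rightsquigarrow m_\gamma$: this produces the desired $r=\gamma$ term of the right-hand side, plus an extra sum over $m\in\{1,\dots,m_\gamma-1\}\setminus((K\cup\{m_\gamma\})\setminus\{\beta\})=\{m_1,\dots,m_{\gamma-1}\}$ (since the $m$'s below $m_\gamma$ not in $K$ are exactly $m_1,\dots,m_{\gamma-1}$, and $\beta>m_\gamma$ so removing/adding $\beta$ is irrelevant here). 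That extra sum, after collecting the overall coefficient $(-q)_{(K\setminus\{\beta\})>m_\gamma}q_{(K\cup\{m_\gamma\})\setminus\{\beta\},m_\gamma}^{-1}\cdot(1-q^2)q_{(K\cup\{m_\gamma\})\setminus\{\beta\},m_\gamma}(-q)_{((K\cup\{m_\gamma\})\setminus\{\beta\})>m_\gamma}^{-1}$ and simplifying, should reproduce exactly $(1-q^2)\sum_{r=1}^{\gamma-1}(-q)_{(K\setminus\{\beta\})>m_r}[X]_{q,Q,J,(K\cup\{m_r\})\setminus\{\beta\}}Y_{m_r\alpha}$ — i.e.\ the left-hand side of the $(\gamma-1)$-instance, multiplied by $(1-q^2)$ — so one applies the induction hypothesis to it. Then one combines this with the explicit $\sum_{r=1}^{\gamma-1}$ term obtained from splitting, and checks that the powers of $q^2$ line up to give $q^{2(\gamma-r)}$ for $r\le\gamma-1$: the induction hypothesis supplies $q^{2((\gamma-1)-r)}$, and the factor $(1-q^2)\cdot(\text{something})+q^2\cdot(\text{induction term})$ telescopes just as in \cref{relationsumofXYtemporary}, where the identity $1-q^{2\gamma}=(1-q^2)+q^2(1-q^{2(\gamma-1)})$ — or rather its appropriately weighted analogue with the $(-q)$-factors — does the bookkeeping.

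The main obstacle I expect is precisely this last telescoping: in \cref{relationsumofXYtemporary} the coefficients were plain powers of $q^2$, but here every term carries a product of $(-q_{jk})$-type factors $(-q)_{(K\setminus\{\beta\})>m_r}$ and an inverse $q$-product $q_{(K\cup\{m_r\})\setminus\{\beta\},m_r}^{-1}$, and one must verify that when \cref{relationsminorXtimesY} is applied to the $r=\gamma$ term, the ``extra'' terms it generates at index $m_r$ ($r<\gamma$) carry exactly the coefficient $(1-q^2)(-q)_{(K\setminus\{\beta\})>m_r}q_{(K\cup\{m_r\})\setminus\{\beta\},m_r}^{-1}$ times a pure power $q^{2(\gamma-1-r)+2}=q^{2(\gamma-r)}$ — no leftover $(-q)$ or $q$-product discrepancy. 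Concretely: in \cref{relationsminorXtimesY} the extra sum has prefactor $\delta_{\beta\in K}(1-q^2)q_{K\beta}(-q)_{K>\beta}^{-1}$ and summand-factor $(-q)_{(K\setminus\{\beta\})>m}$; specializing $K\to(K\cup\{m_\gamma\})\setminus\{\beta\}$, $\beta\to m_\gamma$ turns $q_{K\beta}(-q)_{K>\beta}^{-1}$ into $q_{(K\cup\{m_\gamma\})\setminus\{\beta\},m_\gamma}(-q)_{((K\cup\{m_\gamma\})\setminus\{\beta\})>m_\gamma}^{-1}$, and one must check this, multiplied by the overall coefficient attached to the $r=\gamma$ term and by $(-q)_{((K\cup\{m_\gamma\})\setminus\{\beta\}\setminus\{m_r\})>m_r}$, collapses to the claimed $q^{2(\gamma-r)}(-q)_{(K\setminus\{\beta\})>m_r}q_{(K\cup\{m_r\})\setminus\{\beta\},m_r}^{-1}$. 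Once these identities among the $q$- and $(-q)$-products are verified (a finite, mechanical check using $q_{ii}=1$, $q_{ij}q_{ji}=q^2$, and the relative order of $m_r,m_\gamma,\beta$ and the elements of $K$), the induction goes through verbatim as in \cref{relationsumofXYtemporary}. $\square$
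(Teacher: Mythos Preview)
Your approach is correct and is exactly the paper's: induction on $\gamma$, using \cref{relationsminorXtimesY} at each step in the same way that \cref{relationsumofXYtemporary} uses \cref{relationsXY}. The worry in your final paragraph dissolves once you note the single identity
\[
(-q)_{((K\cup\{m_\gamma\})\setminus\{\beta\})>m_\gamma}=(-q)_{(K\setminus\{\beta\})>m_\gamma},
\]
which holds because the elements of $(K\cup\{m_\gamma\})\setminus\{\beta\}$ exceeding $m_\gamma$ are precisely those of $K\setminus\{\beta\}$ exceeding $m_\gamma$; with this, the coefficient in front of the ``extra'' sum produced by \cref{relationsminorXtimesY} at step $\gamma$ is exactly $-(1-q^{2})$, so adding it to the left-over $\sum_{r=1}^{\gamma-1}$ gives $q^{2}\cdot(\text{LHS}_{\gamma-1})=q^{2}\cdot(\text{RHS}_{\gamma-1})$ by induction, and the powers $q^{2(\gamma-r)}$ fall out with no residual $(-q)$- or $q$-product discrepancy.
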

\begin{proof} This follows by induction on $\gamma$.
\end{proof}
\begin{Lem}\label{relationsumofminorXtimesY} For $J,K\in\binom{[N]}{i}$ and $\beta\in K$ we have
\begin{align*}
 & \sum_{\substack{m=1\\m\notin K}}^{\beta-1}{(-q)_{(K\setminus\{\beta\})>m}[X]_{q,Q,J,(K\cup\{m\})\setminus\{\beta\}}Y_{m\alpha}}\\
={} & \sum_{\substack{m=1\\m\notin K}}^{\beta-1}{q^{2(([N]\setminus K)_{<\beta}^{m}-1)}(-q)_{(K\setminus\{\beta\})>m}q_{K\setminus\{\beta\},m}^{-1}\Bigg(q_{J\alpha}Y_{m\alpha}[X]_{q,Q,J,(K\cup\{m\})\setminus\{\beta\}}}\\*
 & +\delta_{\alpha\in J}(1-q^{2})q_{J\alpha}(-q)_{J<\alpha}^{-1}\\*
 & \hphantom{+\delta_{\alpha\in J}(1-q^{2})}\cdot\bigg(\sum_{\substack{l=\alpha+1\\l\notin J}}^{N}{(-q)_{(J\setminus\{\alpha\})<l}Y_{ml}[X]_{q,Q,(J\cup\{l\})\setminus\{\alpha\},(K\cup\{m\})\setminus\{\beta\}}}\bigg)\Bigg),
\end{align*}
where $([N]\setminus K)_{<\beta}^{m}=n$ is such that $m$ is the $n$-th element of $\{1,\dots,\beta-1\}\setminus K$, ordered from large to small.
\end{Lem}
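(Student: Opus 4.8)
The plan is to read off \cref{relationsumofminorXtimesY} as the special case $\gamma=\mu$ of \cref{relationsumofminorXtimesYtemporary}, where $\mu=|\{1,\dots,\beta-1\}\setminus K|$ and $m_{1}<\dots<m_{\mu}$ enumerate $\{1,\dots,\beta-1\}\setminus K$ in increasing order, exactly as in that lemma. If $\mu=0$, i.e.\ $\{1,\dots,\beta-1\}\subseteq K$, then both sides of \cref{relationsumofminorXtimesY} are empty sums, so the statement reads $0=0$; hence I may assume $\mu\geq1$ and apply \cref{relationsumofminorXtimesYtemporary} with $\gamma=\mu$. Under the bijection $r\mapsto m_{r}$, the outer sums $\sum_{r=1}^{\mu}$ on both sides of that identity become exactly the sums $\sum_{m\in\{1,\dots,\beta-1\}\setminus K}$ appearing in \cref{relationsumofminorXtimesY}, and all minors of $X$ and all factors $Y_{m\alpha}$, $Y_{ml}$ already match term by term; only the two scalar prefactors still need to be reconciled.

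For the power of $q$: since $m_{1}<\dots<m_{\mu}$ is increasing, $m_{r}$ is the $(\mu-r+1)$-th element of $\{1,\dots,\beta-1\}\setminus K$ when that set is listed from large to small, so $([N]\setminus K)_{<\beta}^{m_{r}}=\mu-r+1$ by the definition of the superscript in \cref{relationsumofminorXtimesY}. Therefore $q^{2(\gamma-r)}=q^{2(\mu-r)}=q^{2\left(([N]\setminus K)_{<\beta}^{m_{r}}-1\right)}$, which is the exponent occurring in \cref{relationsumofminorXtimesY}. For the other prefactor: since $\beta\in K$ and $m_{r}\in\{1,\dots,\beta-1\}\setminus K$, we have $m_{r}\neq\beta$ and $m_{r}\notin K$, so $(K\cup\{m_{r}\})\setminus\{\beta\}=(K\setminus\{\beta\})\cup\{m_{r}\}$ and hence
\[
q_{(K\cup\{m_{r}\})\setminus\{\beta\},m_{r}}=\Big(\prod_{j\in K\setminus\{\beta\}}q_{jm_{r}}\Big)q_{m_{r}m_{r}}=q_{K\setminus\{\beta\},m_{r}}
\]
by \cref{multiparam}. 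Substituting these two identifications into \cref{relationsumofminorXtimesYtemporary} with $\gamma=\mu$ produces \cref{relationsumofminorXtimesY} verbatim.

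I do not expect any genuine obstacle here: the step is pure reindexing, the actual work having been carried out in the induction on $\gamma$ that proves \cref{relationsumofminorXtimesYtemporary}, which in turn rests on the commutation identity \cref{relationsminorXtimesY} between minors of $X$ and single entries of $Y$ (and below that on the Laplace expansions of \cref{Laplexp} and the relations \cref{relationsXY}). Should one want a self-contained argument, one could instead establish \cref{relationsumofminorXtimesY} directly by induction on $m_{\mu}$, repeatedly using \cref{relationsminorXtimesY} to pull the outermost $Y$-factor past the relevant minor of $X$ and collecting the telescoping powers of $q^{2}$, in complete analogy with the passage from \cref{relationsumofXYtemporary} to \cref{relationsumofXY}; but the route through \cref{relationsumofminorXtimesYtemporary} is the cleaner one.
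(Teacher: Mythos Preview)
Your proposal is correct and follows exactly the paper's approach: the paper's proof is the single line ``This follows from \cref{relationsumofminorXtimesYtemporary} for $\gamma=\mu$'', and you have simply spelled out the reindexing (the identification of $q^{2(\mu-r)}$ with $q^{2(([N]\setminus K)_{<\beta}^{m_{r}}-1)}$ and of $q_{(K\cup\{m_{r}\})\setminus\{\beta\},m_{r}}$ with $q_{K\setminus\{\beta\},m_{r}}$) that this invocation entails.
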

\begin{proof} This follows from \cref{relationsumofminorXtimesYtemporary} for $\gamma=\mu$.
\end{proof}
Note that \cref{relationsumofminorXtimesY} is a generalization of \cref{relationsumofXY}. Using \cref{relationsumofminorXtimesY}, we can rewrite \cref{relationsminorXtimesY} and then we are able to rewrite $[X]_{q,Q,J,K}Y_{\beta\alpha}$ as an expression where all $Y_{ij}$ are placed before minors of $X$.

Now we prove \cref{theoremtINplusXYinverse}.
\begin{proof}[Proof of \cref{theoremtINplusXYinverse}] We calculate the entry $(i,k)$ of the product of the above expression, without $\mathcal{C}_{N}$, with $tI_{N}+XY$. This gives a sum over $j\in[N]$ of the product of entry $(i,j)$ of the above expression, with $(tI_{N}+XY)_{jk}$. We separate $j\neq i$ and $j=i$. Now we consider the part of degree $g$ in $X$ and $Y$, $0\leq g\leq N$ and we separate the cases $i\neq k$ and $i=k$. For $g=0,1,N$, the formula is easily verified. For $2\leq g\leq N-1$, we first consider the case $i\neq k$. Here, $\sum_{j=1,j\neq i}^{N}\sum_{K\in\binom{[N]}{g},i,j\in K}$ is the same as $\sum_{K\in\binom{[N]}{g},i\in K}\sum_{j\in K,j\neq i}$. Now we apply \cref{LaplexpminorXcolumnX} on
\[
\sum_{j\in K}{(-q)_{K>j}[X]_{q,Q,K\setminus\{j\},L}X_{jm}}.
\]
If $m\in L$ then there exists $\alpha\in[N]\setminus L$ and we rewrite $L=(L\cup\{\alpha\})\setminus\{\alpha\}$. Hence this gives $0$ by \cref{LaplexpminorXcolumnX}. If $m\notin L$ this gives $[X]_{q,Q,K,L\cup\{m\}}(-q)_{L>m}$ by \cref{LaplexpminorXcolumnX}. On $[X]_{q,Q,K,L\cup\{m\}}Y_{mk}$ we apply \cref{relationsminorXtimesY} combined with \cref{relationsumofminorXtimesY}. This results in
\begin{align}
 & q_{Lm}^{-1}\Bigg(q_{Kk}Y_{mk}[X]_{q,Q,K,L\cup\{m\}}+\delta_{k\in K}(1-q^{2})\sum_{\substack{l=k+1\\l\notin K}}^{N}{\dots}\label{theoremtINplusXYinverseprooffirstline}\\*
 & -(1-q^{2})\sum_{\substack{\mu=1\\\mu\notin L}}^{m-1}{\dots\bigg(q_{Kk}Y_{\mu k}[X]_{q,Q,K,L\cup\{\mu\}}+\delta_{k\in K}(1-q^{2})\sum_{\substack{l=k+1\\l\notin K}}^{N}{\dots}\bigg)}\Bigg).\label{theoremtINplusXYinverseproofsecondline}
\end{align}
In the first part of \cref{theoremtINplusXYinverseproofsecondline} we have
\[
-(1-q^{2})\sum_{m\notin L}{\sum_{\substack{\mu=1\\\mu\notin L}}^{m-1}{q^{2(([N]\setminus L)_{<m}^{\mu}-1)}(-q)_{L>\mu}q_{L\mu}^{-1}Y_{\mu k}[X]_{q,Q,K,L\cup\{\mu\}}}}.
\]
We rewrite this as $\sum_{\mu\notin L}\sum_{m=\mu+1,m\notin L}^{N}$. Let $[N]\setminus L=\{l_{1},\dots,l_{\lambda}\}$ with $l_{1}<\dots<l_{\lambda}$. Let $\mu=l_{\alpha}$. Then we have
\[
-(1-q^{2})\sum_{\beta=\alpha+1}^{\lambda}{q^{2((\{l_{1},\dots,l_{\lambda}\})_{<l_{\beta}}^{l_{\alpha}}-1)}}=-(1-q^{2})\sum_{\beta=\alpha+1}^{\lambda}{q^{2(\beta-\alpha-1)}}=q^{2(\lambda-\alpha)}-1.
\]
Now, the term with the $-1$ cancels with the first part of \cref{theoremtINplusXYinverseprooffirstline}. In what remains, we rewrite $\sum_{L\in\binom{[N]}{g-1}}\sum_{\mu\notin L}$ as $\sum_{L\in\binom{[N]}{g}}\sum_{\mu\in L}$ and we replace $L$ by $L\setminus\{\mu\}$. Now, $\lambda-\alpha=|([N]\setminus(L\setminus\{\mu\}))_{>\mu}|=|[N]_{>\mu}|-|L_{>\mu}|=N-\mu-g+|L_{<\mu}|+1$. Then we obtain
\[
\sum_{\mu\in L}{(-q)_{\mu>L}[Y]_{q,Q,L\setminus\{\mu\},K\setminus\{i\}}Y_{\mu k}},
\]
on which we apply \cref{Laplexp}. If $k\in K$ this gives $0$. If $k\notin K$ this gives $(-q)_{k>(K\cup\{k\})\setminus\{i\}}[Y]_{q,Q,L,(K\cup\{k\})\setminus\{i\}}$. We rewrite $\sum_{K\in\binom{[N]}{g},i\in K,k\notin K}$ as $\sum_{K\in\binom{[N]}{g+1},i,k\in K}$ and we replace $K$ by $K\setminus\{k\}$.

The second parts of \cref{theoremtINplusXYinverseproofsecondline} and \cref{theoremtINplusXYinverseprooffirstline} (which contain $\delta_{k\in K}$) can be treated analogously. Here, we rewrite
\[
\sum_{K\in\binom{[N]}{g},i,k\in K}\sum_{\substack{l=k+1\\l\notin K}}^{N}\text{ as }\sum_{K\in\binom{[N]}{g+1},i,k\in K,(K\setminus\{i\})_{>k}\neq\emptyset}\sum_{\substack{l=k+1\\l\neq i,l\in K}}^{N}
\]
and we replace $K$ by $K\setminus\{l\}$. We simplify the part containing $l$ and what remains in $l$ is
\begin{align*}
 & (1-q^{2})\sum_{\substack{l=k+1\\l\neq i,l\in K}}^{N}{q^{2|(K\setminus\{i\})_{<l}|}}=(1-q^{2})\sum_{\mathclap{l\in(K\setminus\{i\})_{>k}}}{q^{2|(K\setminus\{i\})_{\leq k}|+2|((K\setminus\{i\})_{>k})_{<l}|}}\\*
={} & q^{2|(K\setminus\{i\})_{\leq k}|}(1-q^{2|(K\setminus\{i\})_{>k}|})=q^{2|(K\setminus\{i\})_{\leq k}|}-q^{2|K\setminus\{i\}|}.
\end{align*}
Now we combine all terms in this calculation for $i\neq k$ and consider the cases $(K\setminus\{i\})_{>k}=\emptyset$ and $(K\setminus\{i\})_{>k}\neq\emptyset$. In both cases, this total expression gives $0$. The previous calculation for $i\neq k$ can be adapted to the case $i=k$, which results in $\sigma_{q,Q}(g)t^{N-g}$. Now we proved that our expression is a one-sided inverse. This is sufficient because $tI_{N}+XY$ is invertible since by \cref{CH}, there exists a \CH{} theorem for $tI_{N}+XY$ as well, from which we can obtain a formula for $(tI_{N}+XY)^{-1}$.
\end{proof}
\begin{Rem} By setting $t=0$, \cref{theoremtINplusXYinverse} gives $Y^{-1}X^{-1}$. By applying $'$ and using \cref{primeappliedtompqminor}, we obtain $(tI_{N}+YX)^{-1}$.
\end{Rem}
\nocite{*}
\bibliographystyle{alpha}
\bibliography{References}

\newcommand{\etalchar}[1]{$^{#1}$}
\begin{thebibliography}{GKL{\etalchar{+}}95}

\bibitem[AST91]{ASTqdoGLn}
Michael Artin, William Schelter, and John Tate.
\newblock Quantum deformations of {$\mathrm{GL}_n$}.
\newblock {\em Comm. Pure Appl. Math.}, 44(8-9):879--895, 1991.

\bibitem[CFR09]{CFRapoMm}
A.~Chervov, G.~Falqui, and V.~Rubtsov.
\newblock Algebraic properties of {M}anin matrices. {I}.
\newblock {\em Adv. in Appl. Math.}, 43(3):239--315, 2009.

\bibitem[CSS09]{CSSndCBfaCti}
Sergio Caracciolo, Alan~D. Sokal, and Andrea Sportiello.
\newblock Noncommutative determinants, {C}auchy-{B}inet formulae, and
  {C}apelli-type identities. {I}. {G}eneralizations of the {C}apelli and
  {T}urnbull identities.
\newblock {\em Electron. J. Combin.}, 16(1):Research Paper 103, 43, 2009.

\bibitem[DCF19]{DCFtfoqGLNC}
Kenny De~Commer and Matthias Flor{\'e}.
\newblock The field of quantum {$GL(N,\mathbb{C})$} in the {$C^*$}-algebraic
  setting.
\newblock {\em Selecta Math. (N.S.)}, 25(1):25:3, 2019.

\bibitem[Dem91]{DemmqdotgGLn}
E.~E. Demidov.
\newblock Multiparameter quantum deformations of the group {$\mathrm{GL}(n)$}.
\newblock {\em Uspekhi Mat. Nauk}, 46(4(280)):147--148, 1991.

\bibitem[Dri87]{Driqg}
V.~G. Drinfel'd.
\newblock Quantum groups.
\newblock In {\em Proceedings of the {I}nternational {C}ongress of
  {M}athematicians, {V}ol. 1, 2 ({B}erkeley, {C}alif., 1986)}, pages 798--820.
  Amer. Math. Soc., Providence, RI, 1987.

\bibitem[FRT88]{FRTqoLgaLa}
L.~D. Faddeev, N.~Yu. Reshetikhin, and L.~A. Takhtajan.
\newblock Quantization of {L}ie groups and {L}ie algebras.
\newblock In {\em Algebraic analysis, {V}ol.\ {I}}, pages 129--139. Academic
  Press, Boston, MA, 1988.

\bibitem[FZ94]{FZcpoCaTifcit}
Dominique Foata and Doron Zeilberger.
\newblock Combinatorial proofs of {C}apelli's and {T}urnbull's identities from
  classical invariant theory.
\newblock {\em Electron. J. Combin.}, 1:Research Paper 1, approx.\ 10, 1994.

\bibitem[GKL{\etalchar{+}}95]{GKLLRTnsf}
Israel~M. Gelfand, Daniel Krob, Alain Lascoux, Bernard Leclerc, Vladimir~S.
  Retakh, and Jean-Yves Thibon.
\newblock Noncommutative symmetric functions.
\newblock {\em Adv. Math.}, 112(2):218--348, 1995.

\bibitem[GPS97]{GPSHsacrorea}
D.~I. Gurevich, P.~N. Pyatov, and P.~A. Saponov.
\newblock Hecke symmetries and characteristic relations on reflection equation
  algebras.
\newblock {\em Lett. Math. Phys.}, 41(3):255--264, 1997.

\bibitem[JW17]{JWtcotreavqm}
David Jordan and Noah White.
\newblock {The center of the reflection equation algebra via quantum minors}.
\newblock {\em arXiv e-prints}, page arXiv:1709.09149, Sep 2017.

\bibitem[JZ18]{JZCiomqlg}
Naihuan Jing and Jian Zhang.
\newblock Capelli identity on multiparameter quantum linear groups.
\newblock {\em Sci. China Math.}, 61(2):253--268, 2018.

\bibitem[KS97]{KSqgatr}
Anatoli Klimyk and Konrad Schm{\"u}dgen.
\newblock {\em Quantum groups and their representations}.
\newblock Texts and Monographs in Physics. Springer-Verlag, Berlin, 1997.

\bibitem[Mer91]{MerqdBCf}
S.~A. Merkulov.
\newblock Quantum {$m\times n$}-matrices and {$q$}-deformed {B}inet-{C}auchy
  formula.
\newblock {\em J. Phys. A}, 24(21):L1243--L1247, 1991.

\bibitem[NUW94]{NUWaqaotCi}
Masatoshi Noumi, T{\^o}ru Umeda, and Masato Wakayama.
\newblock A quantum analogue of the {C}apelli identity and an elementary
  differential calculus on {$\mathrm{GL}_q(n)$}.
\newblock {\em Duke Math. J.}, 76(2):567--594, 1994.

\bibitem[NW09]{NWanotCifspoHt}
Kyo Nishiyama and Akihito Wachi.
\newblock A note on the {C}apelli identities for symmetric pairs of {H}ermitian
  type.
\newblock In {\em Infinite dimensional harmonic analysis {IV}}, pages 223--254.
  World Sci. Publ., Hackensack, NJ, 2009.

\bibitem[PS95]{PScrfqm}
P.~N. Pyatov and P.~A. Saponov.
\newblock Characteristic relations for quantum matrices.
\newblock {\em J. Phys. A}, 28(15):4415--4421, 1995.

\bibitem[PW91]{PWqlg}
Brian Parshall and Jian~Pan Wang.
\newblock Quantum linear groups.
\newblock {\em Mem. Amer. Math. Soc.}, 89(439):vi+157, 1991.

\bibitem[Res90]{ResmqgatqHa}
N.~Reshetikhin.
\newblock Multiparameter quantum groups and twisted quasitriangular {H}opf
  algebras.
\newblock {\em Lett. Math. Phys.}, 20(4):331--335, 1990.

\bibitem[{\v S}ko08]{Sasafetifqmttmc}
Zoran {\v S}koda.
\newblock {A simple algorithm for extending the identities for quantum minors
  to the multiparametric case}.
\newblock {\em arXiv e-prints}, page arXiv:0801.4965, Jan 2008.

\bibitem[Sud90]{SudcmqoGLn}
A.~Sudbery.
\newblock Consistent multiparameter quantisation of {$\mathrm{GL}(n)$}.
\newblock {\em J. Phys. A}, 23(15):L697--L704, 1990.

\bibitem[Wor95]{WorCstaragbue}
S.~L. Woronowicz.
\newblock {$C^*$}-algebras generated by unbounded elements.
\newblock {\em Rev. Math. Phys.}, 7(3):481--521, 1995.

\bibitem[Zha98]{ZhatqCHt}
J.~J. Zhang.
\newblock The quantum {C}ayley-{H}amilton theorem.
\newblock {\em J. Pure Appl. Algebra}, 129(1):101--109, 1998.

\end{thebibliography}
\end{document}